\newtheorem{thm}[subsection]{Theorem}
\newtheorem{prop}[subsection]{Proposition}
\newtheorem{lemma}[subsection]{Lemma}
\newtheorem{cor}[subsection]{Corollary}
\theoremstyle{definition}
\newtheorem{defn}[subsection]{Definition}
\newtheorem{hypo}[subsection]{Hypothesis}
\newtheorem{ex}[subsection]{Example}
\theoremstyle{remark}
\newtheorem{rmk}[subsection]{Remark}
\numberwithin{equation}{section}
\newtheoremstyle{named}{}{}{\itshape}{}{\bfseries}{.}{.5em}{#1 \thmnote{#3}}
\theoremstyle{named}
\newtheoremstyle{named}{}{}{\itshape}{}{\bfseries}{.}{.5em}{#1 \thmnote{#3}}
\theoremstyle{named}
\newtheoremstyle{named}{}{}{\itshape}{}{\bfseries}{.}{.5em}{#1 \thmnote{#3}}
\theoremstyle{named}
\newcommand{\mylabel}[2]{#2\def\@currentlabel{#2}\label{#1}}
\newcommand{\leqnomode}{\tagsleft@true}
\newcommand{\reqnomode}{\tagsleft@false}
\newcommand{\Q}{\mathbb{Q}}
\newcommand{\Z}{\mathbb{Z}}
\newcommand{\C}{\mathbb{C}}
\newcommand{\F}{\mathbb{F}}
\newcommand{\calO}{\mathcal{O}}
\newcommand{\calS}{\mathcal{S}}
\newcommand{\Sel}{\text{Sel}}
\newcommand{\corank}{\text{corank}}
\newcommand{\cyc}{\text{cyc}}
\newcommand{\Aut}{\text{Aut}}
\newcommand{\pd}{\text{pd}}
\DeclareMathOperator{\Ima}{Im}
\DeclareMathOperator{\Gal}{Gal}
\newcommand{\frakp}{\mathfrak{p}}
\newcommand{\frakm}{\mathfrak{m}}
\newcommand{\frakN}{\mathfrak{N}}
\newcommand{\fraka}{\mathfrak{a}}
\newcommand{\Pic}{\text{Pic}}
\DeclareMathOperator{\Hom}{\text{Hom}}
\DeclareMathOperator{\disc}{\text{disc}}
\DeclareMathOperator{\Cores}{Cores}
\DeclareFontFamily{U}{wncy}{}
\DeclareFontShape{U}{wncy}{m}{n}{<->wncyr10}{}
\DeclareSymbolFont{mcy}{U}{wncy}{m}{n}
\DeclareMathSymbol{\Sha}{\mathord}{mcy}{"58}
\newcommand{\ac}{\text{ac}}
\newcommand{\Tor}{\text{Tor}}
\newcommand{\coker}{\text{coker}}
\newcommand{\calL}{\mathcal{L}}
\newcommand{\vbar}{\overline{v}}
\newcommand{\calH}{\mathcal{H}}
\newcommand{\depth}{\text{depth}}
\begin{document}
	
	\begin{frontmatter}
		
		%%  "Title of the Paper"
		\title{Galois cohomology of elliptic curves over anticyclotomic extensions}
		%\thankstext{T1}{???}
		
		\begin{aug}
			%%  \author{\fnms{John} \snm{Smith}\thanksref{t2}\ead[label=e1]{smith@foo.com}\ead[label=e2,url]{www.foo.com}}
			%%  \thankstext{t2}{The author is supported by ...}
			%%  \address{line 1\\ line 2\\ \printead{e1}\\\printead{e2}}
			
			\author{\fnms{Dac-Nhan-Tam} \snm{Nguyen} %\thanksref{t1}
				\ead[label=e1]{tamnguyen@math.ubc.ca}}
			\address{Department of Mathematics, University of British Columbia \\
				1984 Mathematics Road, Vancouver, BC V6T 1Z2, Canada \\\printead{e1}}
			%\author{\fnms{???} \snm{???}\thanksref{t2}\ead[label=e2]{???}}
			%\address{???\\\printead{e2}}
			%\and
			
			\author{\fnms{Sujatha} \snm{Ramdorai} %\thanksref{t2}
				\ead[label=e2]{sujatha@math.ubc.ca}}
			\address{Department of Mathematics, University of British Columbia \\
				1984 Mathematics Road, Vancouver, BC V6T 1Z2, Canada \\\printead{e2}}
			%\author{\fnms{???} \snm{???}%
				%        \ead[label=e3]{???}%
				%        \ead[label=u1,url]{???}}
			%\address{???\\\printead{e3}\\\printead{u1}}
			%\thankstext{t1}{University of British Columbia}
			%\thankstext{t2}{The author is supported by ...}
		\end{aug}
		%%  History:
		%\received{\sday{3} \smonth{1} \syear{2022}}
		
		\begin{center}
			\textit{Dedicated to the Memory of John Coates}
			%\\[1cm]
		\end{center}
		
		\begin{abstract}
			Let $K$ be an imaginary quadratic field and $p$ be an odd prime number.
			Let $E/\Q$ be an elliptic curve with good ordinary reduction at $p$. We study the Iwasawa theory of $E$ over the anticyclotomic $\Z_p$-extension of $K$ by adopting a unifying framework.
			We also study the Galois cohomology of the dual Selmer group of $E$ over the unique $\Z_p^2$-extension
			of $K$ as well over the anticyclotomic extension of $K$.
		\end{abstract}
		
		\begin{keyword}[class=AMS]
			\kwd[Primary ]{11G05}
			\kwd[; secondary ]{11R23}
			\kwd{11R34}
		\end{keyword}
		
		%%  Upper case for every keyword
		\begin{keyword}
			\kwd{Iwasawa theory}
			\kwd{Galois cohomology}
			\kwd{Selmer groups}
			\kwd{elliptic curves}
			\kwd{anticyclotomic extensions}
		\end{keyword}
		
		%\tableofcontents
		
	\end{frontmatter}
	
	%%  The body
	
	\section{Introduction}
	
	Let $K$ be an imaginary quadratic number field and $p \geq 3$ be an odd prime number. By Leopoldt's conjecture, which is 
	known to hold over $K$, there are two linearly independent $\Z_p$-extensions, namely the cyclotomic $\Z_p$-extension
	and the anticyclotomic ${\Bbb Z}_p$-extension, which we denote by $K_\cyc$ and $K_{\ac}$ respectively. Let $E$ be an elliptic curve over 
	$K$  which is ordinary at the primes above $p.$ Iwasawa theory of elliptic curves over cyclotomic ${\mathbb Z}_p$-extensions has been  studied for almost five decades,
	and that over the  anticyclotomic extension spans a  period of smaller duration. The results are significantly different over these two extensions.
	While a conjecture of Mazur, later proved by Kato, asserts that the dual Selmer group  $X(E/K_\cyc)$ over the cyclotomic extension
	is torsion as a module over the corresponding Iwasawa algebra, the dual Selmer group  $X(E/K_{\ac})$ is not necessarily torsion as 
	a module over the corresponding Iwasawa algebra and may have positive rank. When the rank is positive, the contribution to the rank is related to the 
	existence of Heegner points in the Mordell-Weil group of the elliptic curve along the anticyclotomic extension. Iwasawa  theory of $E$ in the anticyclotomic setting has
	been studied by Perrin-Riou \cite{P-R84, P-R87, P-R92, P-R95} and later by Howard \cite{How04, How04b}, Bertolini-Darmon \cite{BD05}, Agboola-Howard \cite{AH06} depending on the different contexts that are possible in this setting. 
	
	The goal in this article is to approach the Iwasawa theoretic study of elliptic curves over anticyclotomic ${\mathbb Z}_p$-extensions from
	within a unified framework that is largely algebraic. In the case that the dual Selmer group is torsion over the anticyclotomic extension, we show that
	the methods and results are analogous to those in  the cyclotomic extension. If the dual Selmer group over the anticyclotomic extension has positive rank,
	then we recover most of the known results by far simpler methods. Our approach and techniques have the advantage that they can be used to illustrate the 
	main results with concrete numerical examples.
	
	This article consists of five sections, including this introductory section. Section \ref{sec:prelim} is preliminary in nature. In Section \ref{sec:literature}, we recall results in the literature regarding the module structure of the Selmer group over the anticyclotomic extension. In both Section \ref{sec:def} and Section \ref{sec:indef}, we study the Galois cohomology of Selmer groups in the generic case. In Section \ref{sec:def}, we study the module structure of the Selmer group over the compositum $K_\cyc \cdot K_\ac$. In Section \ref{sec:indef}, we recover the results in \cite{Ber01} with different but simpler hypotheses. In Section \ref{sec:mu=0}, we give an overview of the literature on the vanishing of the $\mu$-invariant and again obtain some results using simpler methods.
	
	It is a privilege and pleasure to dedicate this article in memory of John Coates. While the influence and contributions of John Coates in the study of
	Iwasawa theory is well-known and recognised, we would like to mention that he also deeply felt the lack of articles that would consolidate the
	scattered results in the literature. The monograph \cite{CS00} was written largely with this view, and he always felt that a similar endeavour should be 
	made to present the results in Iwasawa theory of elliptic curves in the anticyclotomic setting. This article  is a humble attempt in that direction.

	\section{Preliminaries} \label{sec:prelim}
	
	\subsection{Selmer groups}
	
	Let $p$ be an odd prime. Let $K$ be a number field and $E/K$ be an elliptic curve which is ordinary at every prime above $p$. Denote by $S$  a finite set of primes containing the primes above $p$ and the primes of bad reduction of $E/K$. Let $K_S$ be the maximal  extension that is unramified outside of $S$. 
	
	For every Galois extension $L/K$ contained in $K_S$ and a prime $v$ in $K$, we will define the group $J_v(E/L)$ as
	\[J_v(E/L) = \bigoplus_{w \mid v} H^1(L_w, E)(p)\]
	where the direct sum is over the primes $w$ of $L$ lying above $v$. For a finite extension $L/K$, we define the $p^n$-Selmer group $\Sel(E_{p^n}/L)$ as the kernel of the natural restriction map
	\[H^1(K_S/L, E_{p^n}) \rightarrow \bigoplus_{v \in S} J_v(E/L),\] 
	and define the $p$-primary Selmer group $\Sel_p(E/L)$ as the direct limit \[\Sel_p(E/L) := \varinjlim_{n} \Sel(E_{p^n}/L).\] For a general Galois extension $L/K$, we define $\Sel_p(E/L) = \varinjlim_{F} \Sel_p(E/F)$ where $F$ runs over the finite extensions of $K$ contained in $L$. If $L$ is contained in $K_S$, then there is an exact sequence
	\[0 \rightarrow \Sel_p(E/L) \rightarrow H^1(K_S/L, E_{p^\infty}) \rightarrow \bigoplus_{v \in S} J_v(E/L).\]
	For a finite Galois extension $L/K$, we define the compact Selmer group as the projective limit $\calS_p(E/L) := \varprojlim_{n} \Sel_p(E_{p^n}/ F)$ with respect to the projections $E_{p^{n + 1}} \xrightarrow{\times p} E_{p^n}$. For a general Galois extension $L/K$, let $\calS_p(E/L)$ be the inverse limit $\calS_p(E/L) := \varprojlim_{F} \calS_p(E/F)$ with respect to corestriction maps, where $F$ runs over the finite extensions of $K$ contained in $L$.
	
	\subsection{Modules over the Iwasawa algebra}
	
	Let $\calL/K$ be a $p$-adic Lie extension and let $G = \Gal(\calL/K)$. Denote by $\Lambda(G)$  the Iwasawa algebra
	\[\Z_p \llbracket \Gal(\calL/K) \rrbracket := \varprojlim_{F \subset \calL, F/K \text{ finite}} \Z_p [\Gal(F/K)].\]
	For a $\Z_p$-module $A$, we will denote by \[A^\vee = \Hom(A, \Q_p/\Z_p)\] its Pontryagin dual. The $p$-primary group $\Sel_p(E/\calL)$ has the discrete topology as a $\Lambda(G)$-module and its Pontryagin dual $X(E/\calL) := \Sel_p(E/\calL)^\vee$ is a compact $\Lambda(G)$-module. 
	
	When $G \simeq \Z_p$ and $X$ is a finitely-generated $\Lambda(G)$-module, we will denote the Iwasawa $\mu$ and $\lambda$-invariants of $X$ by $\mu_G(X)$ and $\lambda_G(X)$, respectively. The group $G$ is often clear from the context, in which case we will suppress $G$ from the notation and let $\mu(X) := \mu_G(X), \lambda(X) := \lambda_G(X)$.
	
	\section{Module structure of Selmer groups over the anticyclotomic extension} \label{sec:literature}
	Let $E/\Q$ be an elliptic curve of conductor $N$, $p \geq 5$ be a prime where $E$ has good ordinary reduction. Let $K$ be an imaginary quadratic field of discriminant $\disc(K) \neq -3, -4$. We may decompose $N = N^{+}N^{-}$ where $N^{+}$ is the product of the primes dividing $N$ that split in $K/\Q$ and $N^{-}$ is the product of the primes that are inert. Following the terminology in \cite{Ber01}, we say that $E$ is in the generic case if $E$ does not have complex multiplication (CM) or the field of CM of $E$ is different from $K$. Otherwise, we say that $E$ is in the exceptional case.
	
	Recall that we denote by $K_\cyc$ and $K_\ac$ the cyclotomic and anticyclotomic extensions over $K$, respectively. Moreover, let $K_\infty = K_\ac \cdot K_\cyc$ be their compositum and let $G_\infty = \Gal(K_\infty/K)$. The Galois groups $\Gamma_\cyc, H_\cyc, \Gamma_\ac, H_\ac$ are defined as follows:
	\[\Gamma_\cyc = \Gal(K_\cyc/K), H_\cyc = \Gal(K_\infty/K_\cyc),\]
	\[\Gamma_\ac = \Gal(K_\ac/K), H_\ac = \Gal(K_\infty/K_\ac).\]
	
	\subsection{Generic case}
	
	First consider the case when the complex $L$-function $L(E/K, s)$ vanishes to even order at $s = 1$. This occurs, for example, when $N^{-}$ is a square-free product of an odd number of primes. In this case, it is due to Bertolini-Darmon that $X(E/K_\ac)$ is $\Lambda(\Gamma_\ac)$-torsion under mild technical hypotheses \cite{BD05}. This is also known as the {\it definite} setting in the literature. On the other hand, in the case where $N^{-}$ is the product of an even number of primes, the module $X(E/K_\ac)$ has $\Lambda(\Gamma_\ac)$-rank $1$ \cite{How04, How04b}. This is known as the {\it indefinite} setting. The reasoning behind such terminology is related to the quaternion algebra over $\Q$ whose discriminant has finite part equal to $N^{-}$ and the study of its associated Shimura variety. For further details on this terminology, readers are advised to consult \cite[Section 2]{BLV23}.
	
	Let us first consider the definite setting. Because $X(E/K_\ac)$ is $\Lambda(\Gamma_\ac)$-torsion, the result of Greenberg \cite[Section 7]{Gre89} allows us to conclude that $X(E/K_\ac)$ does not have any non-trivial finite $\Lambda(\Gamma_\ac)$-submodules. Greenberg's result concerns Selmer groups of general $p$-adic Galois representations and the statement below is a simplified version for elliptic curves due to Bertolini.
	
	\begin{thm} \cite[Theorem 4.5]{Ber01} \label{thm:Ber-tor}
		Let $\mathcal{T}$ be the order of the torsion group of $\oplus_{v \in S \setminus \{p\}} E(K_\ell)$. Assume that 
		\begin{enumerate}
			\item $p \nmid 6 \mathcal{T},$
			\item $X(E/K_\ac)$ is a $\Lambda(\Gamma_\ac)$-torsion module.
		\end{enumerate}
		Then $X(E/K_\ac)$ has no non-trivial finite $\Lambda(\Gamma_\ac)$-submodules.
	\end{thm}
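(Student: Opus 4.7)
The plan is to follow Greenberg's general strategy \cite{Gre89}, specialized to the anticyclotomic setting. Set $\Gamma = \Gamma_\ac$, $\Lambda = \Lambda(\Gamma) \cong \Z_p\llbracket T \rrbracket$, and $X = X(E/K_\ac)$. Since $\Lambda$ is a two-dimensional regular local ring, finite $\Lambda$-submodules of $X$ coincide with the pseudo-null $\Lambda$-submodules, and by Pontryagin duality the conclusion is equivalent to $\Sel_p(E/K_\ac)$ having no nonzero finite $\Lambda$-quotient. The central tool is the defining exact sequence
\[0 \to \Sel_p(E/K_\ac) \to H^1(K_S/K_\ac, E[p^\infty]) \xrightarrow{\psi} \bigoplus_{v \in S} J_v(E/K_\ac),\]
and the strategy is to prove that each of the three terms is $p$-divisible and that $\psi$ is surjective, from which the desired property will follow.

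First, I would establish the global vanishing $H^2(K_S/K_\ac, E[p^\infty]) = 0$. Using that $X$ is $\Lambda$-torsion together with the global Euler--Poincar\'e formula over the anticyclotomic tower, the corank of $H^1(K_S/K_\ac, E[p^\infty])$ matches its expected value, forcing $H^2$ to vanish. From this vanishing and the long exact cohomology of the Kummer sequence $0 \to E[p] \to E[p^\infty] \xrightarrow{p} E[p^\infty] \to 0$ one deduces that $H^1(K_S/K_\ac, E[p^\infty])$ is $p$-divisible. A standard Poitou--Tate duality argument combined with $H^2 = 0$ also yields surjectivity of $\psi$.

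Next, I would analyze the local terms $J_v(E/K_\ac)$ for each $v \in S$. For $v \in S \setminus \{p\}$, the hypothesis $p \nmid \mathcal{T}$ gives $E(K_v)[p] = 0$, and Tate local duality then yields $p$-divisibility of $H^1(K_{\ac,w}, E)(p)$ for each $w \mid v$. For $v \mid p$, I would use the ordinary filtration
\[0 \to \widehat{E}[p^\infty] \to E[p^\infty] \to \widetilde{E}[p^\infty] \to 0\]
of $G_{K_v}$-modules; the condition $p \nmid 6$ rules out spurious contributions from small torsion of the formal group and of the \'etale reduction, and a direct local computation gives the required $p$-divisibility of the corresponding summand of $J_v$.

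Combining the above, the defining sequence of $\Sel_p(E/K_\ac)$ becomes a short exact sequence of $p$-divisible modules, forcing $\Sel_p(E/K_\ac)$ itself to be $p$-divisible. To upgrade $p$-divisibility to the absence of finite $\Lambda$-quotients (equivalently, the absence of finite $\Lambda$-submodules of $X$), I would invoke once more the $\Lambda$-torsion hypothesis via the standard argument in \cite{Gre89}: the torsion hypothesis rules out any $\Lambda$-free summand of $X$, and together with the $p$-divisibility just established, this excludes finite $\Lambda$-submodules. The principal obstacle is the local analysis at primes above $p$; the behavior of such primes in $K_\ac/K$ depends on whether they split or remain inert in $K/\Q$, and the hypothesis $p \nmid 6$ is decisive in handling the formal group and the reduction uniformly across both cases.
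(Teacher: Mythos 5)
The paper does not actually prove Theorem~\ref{thm:Ber-tor} itself: it is stated as a citation of Bertolini's Theorem 4.5, which the text attributes to the general mechanism of Greenberg \cite[Section 7]{Gre89}. Your proposal is a reconstruction of exactly that Greenberg--Bertolini mechanism ($H^2 = 0$ via the torsion hypothesis, surjectivity of the global-to-local map, divisibility of the global $H^1$ and of the local terms $J_v$, then duality), so it coincides in approach with the proof the paper refers the reader to. By contrast, what the paper contributes as its \emph{own} alternative argument is Theorem~\ref{thm:tor} (requiring the stronger hypothesis that $\Sel_p(E/K)$ is finite), where one proves $H^1(\Gamma_\ac, \Sel_p(E/K_\ac)) = 0$ directly via a control diagram and then invokes Proposition~\ref{lem:H1=0} (Auslander--Buchsbaum plus \cite{Ven02}) rather than establishing $p$-divisibility term by term. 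That route is shorter when the base Selmer group is finite, while the Greenberg--Bertolini route you follow works whenever $X(E/K_\ac)$ is torsion.

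Two things in your sketch deserve tightening. First, the deduction ``the three terms of $0 \to \Sel_p(E/K_\ac) \to H^1(K_S/K_\ac, E_{p^\infty}) \to \bigoplus_v J_v(E/K_\ac) \to 0$ are $p$-divisible, hence so is the kernel'' does not follow formally: the snake lemma for multiplication by $p$ identifies $\Sel_p(E/K_\ac)/p$ with the cokernel of $H^1(K_S/K_\ac, E_{p^\infty})[p] \to \bigl(\bigoplus_v J_v\bigr)[p]$, and vanishing of this cokernel is a separate surjectivity statement on $p$-torsion. This is precisely where the residual-level local analysis at $v \mid p$ (ordinary filtration, $p \nmid 6$ and $p \nmid \mathcal{T}$) and the comparison $H^1(\cdot, E_p) \simeq H^1(\cdot, E_{p^\infty})[p]$ do their work; you should make that explicit rather than bury it. Second, your closing appeal to the torsion hypothesis (``rules out any $\Lambda$-free summand'') is redundant: once $\Sel_p(E/K_\ac)$ is $p$-divisible, every quotient is $p$-divisible, so $X(E/K_\ac)$ has no nonzero finite submodule by Pontryagin duality alone, with no further input.
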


	It should be noted that this statement applies more generally to any $\Z_p$-extension, and the key hypothesis is that $X(E/K_\ac)$ is a $\Lambda(\Gamma_\ac)$-torsion module. 
	
	When $E(K)$ is finite, one can readily deduce that $X(E/K_\ac)$ is $\Lambda(\Gamma_\ac)$-torsion, by using the well-known fact that the control morphism $\Sel_p(E/K) \rightarrow \Sel_p(E/K_\ac)^{\Gamma_\ac}$ is always a pseudo-isomorphism. In this case, the argument of Theorem 3.11 in the work of Coates and the second author \cite{CS00} in the cyclotomic setting can be applied to give an alternative proof of the non-existence of non-trivial finite $\Lambda(\Gamma_\ac)$-submodules. 
	
	We first recall some notions and results in commutative algebra. For a ring $R$ and an $R$-module $M$, let $\pd_{R}(M)$ denote the projective dimension of $M$ as an $R$-module.
	
	\begin{prop} \cite[Proposition 3.10]{Ven02} \label{prop:Ven}
		Let $G$ be a $p$-adic Lie group. Then a compact $\Lambda(G)$-module $M$ does not have any non-trivial pseudo-null $\Lambda(G)$-submodules if $\pd_{\Lambda(G)}(M) \leq 1$.
	\end{prop}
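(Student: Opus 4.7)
The plan is to combine the explicit projective resolution of $M$ with a biduality computation over the Auslander regular ring $\Lambda := \Lambda(G)$. I first fix a projective resolution $0 \to P_1 \xrightarrow{\phi} P_0 \xrightarrow{\pi} M \to 0$ of length $\leq 1$; the $P_i$ are finitely generated projective, and may be taken free after passing to a torsion-free open subgroup of $G$. Assume for contradiction that $N \subseteq M$ is a non-zero pseudo-null $\Lambda$-submodule, and set $Q := \pi^{-1}(N) \subseteq P_0$, producing the short exact sequence $0 \to P_1 \to Q \to N \to 0$, with $Q$ sitting inside the free module $P_0$.

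Next I apply $\Hom_\Lambda(-,\Lambda)$ to this sequence; write $E^i(-) := \Ext^i_\Lambda(-, \Lambda)$. Projectivity of $P_1$ gives $E^i(P_1) = 0$ for $i \geq 1$, while pseudo-nullity of $N$ (in the Auslander regular sense) gives $E^0(N) = E^1(N) = 0$. The long exact Ext sequence therefore collapses to an isomorphism $Q^* \xrightarrow{\sim} P_1^*$ induced by restriction along $P_1 \hookrightarrow Q$. Dualizing once more and invoking the reflexivity of the projective $P_1$ yields a natural isomorphism $Q^{**} \cong P_1$. Since $Q \subseteq P_0$ is a submodule of a free module, it is torsionless, so the biduality map $Q \to Q^{**}$ is injective; composed with the identification $Q^{**} \cong P_1$ this gives an injection $Q \hookrightarrow P_1$ whose restriction to $P_1 \subseteq Q$ is, by naturality of biduality, the identity on $P_1$. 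The inclusion $P_1 \hookrightarrow Q$ thus splits, so $Q \cong P_1 \oplus N$, and via the splitting $N$ embeds into the free $\Lambda$-module $P_1$. But any non-zero submodule $I$ of a free $\Lambda$-module admits a non-zero map to $\Lambda$ by projecting to a suitable coordinate, so $E^0(I) \neq 0$ and $I$ is not pseudo-null; hence $N = 0$, a contradiction.

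The step I expect to be the main obstacle is setting up the correct formal framework so that the Ext/biduality arithmetic above is licit over the non-commutative ring $\Lambda(G)$: concretely, one needs the Auslander regularity of $\Lambda(G)$, the resulting characterization of pseudo-nullity via $E^0 = E^1 = 0$, the reflexivity of projective $\Lambda$-modules, and (for the final step) the reduction to a uniform pro-$p$ open subgroup over which $\Lambda$ is an integral domain and free modules manifestly have no pseudo-null submodules. Granting this background on $\Lambda(G)$, the argument is a short three-step manipulation of the given resolution and requires no further Iwasawa-theoretic input.
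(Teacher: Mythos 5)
Your proposal is correct, and it is necessarily a different route from the paper, because the paper does not prove this statement at all: it imports it as \cite[Proposition 3.10]{Ven02}. Venjakob's own derivation sits inside the Auslander-regularity machinery of that paper (the grade filtration coming from the bidualizing complex, whose graded pieces are controlled by $\Ext^i_{\Lambda}(\Ext^i_{\Lambda}(M,\Lambda),\Lambda)$ and hence vanish for $i\geq 2$ once $\pd_{\Lambda}(M)\leq 1$). Your argument replaces that machinery by a direct manipulation of a length-one resolution, and I checked it goes through: writing $E^i(-)=\Ext^i_{\Lambda}(-,\Lambda)$, the long exact sequence for $0\to P_1\xrightarrow{\iota} Q\to N\to 0$ together with $E^0(N)=E^1(N)=0$ does give that the restriction $Q^*\to P_1^*$ is an isomorphism; dualizing once more and using naturality of the evaluation map plus reflexivity of the finitely generated projective $P_1$ produces $\theta\colon Q\to P_1$ with $\theta\circ\iota=\mathrm{id}_{P_1}$, so the sequence splits; and a non-zero module with $E^0=0$ cannot embed in a free module. (Two streamlining remarks: injectivity of $Q\to Q^{**}$ is not actually needed, since once the sequence splits $N$ is a direct summand of $Q\subseteq P_0$ and $E^0(N)\neq 0$ already gives the contradiction; alternatively, $\theta$ injective together with $\theta\iota=\mathrm{id}$ forces $N=0$ at once.) What your route buys is that, beyond standard homological algebra over the Noetherian ring $\Lambda(G)$, it uses only the \emph{definition} of pseudo-nullity as grade $\geq 2$, not the full filtration/spectral-sequence apparatus.

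Two caveats you should state explicitly. First, the proposition as quoted speaks of a compact $\Lambda(G)$-module, but your proof (like Venjakob's result) is about finitely generated modules: finite generation is needed for the resolution by finitely generated projectives, for reflexivity of $P_1$ (infinitely generated projectives need not be reflexive), and for the grade characterization of pseudo-nullity; this is the situation in all of the paper's applications, but it should be said. Second, the reduction to a torsion-free open subgroup $H\leq G$ should be accompanied by the two standard compatibilities making it licit: $\Lambda(G)$ is free of finite rank over $\Lambda(H)$, so $\pd_{\Lambda(H)}(M)\leq\pd_{\Lambda(G)}(M)$, and $\Ext^i_{\Lambda(G)}(N,\Lambda(G))\simeq\Ext^i_{\Lambda(H)}(N,\Lambda(H))$, so pseudo-nullity is insensitive to this change of group. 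With those in place, your argument is a complete and more elementary substitute for the citation.
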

	
	We recall the definition of a regular sequence and the depth of a module $M$ over a ring $R$, following \cite{Eis95}.
	
	\begin{defn}\label{def:regular sequence}
		Suppose $ R $ is a ring and let $ M $ be an $ R $-module. 
		A sequence of elements $ f_1,\cdots,f_k $ of $ R $ is called an $ M $-regular sequence if the following conditions hold:
		\begin{enumerate}
			\item The element $ f_i $ is a non-zero divisor on $ M/(f_1,\cdots,f_{i-1})M $ for each $ i=1,\cdots,k $;
			\item the module $ M/(f_1,\cdots,f_k)M $ is not zero.	
		\end{enumerate}
		If $ I $ is an ideal of the ring $ R $ and $ f_1,\cdots,f_k \in I $ then we call $ f_{1},…,f_{k} $ an $ M $-regular sequence in $ I $. 
		Moreover, if $ M=R $, we call $ f_{1},\cdots,f_{k} $ a regular sequence in $ I $.
	\end{defn}
	\begin{defn}\label{def:depth} %(Definition 5.1 of \cite{venjakob2002structure} and Section 18.1 of \cite{Eisenbud1995})
		Let $ I $ be an ideal of a ring $ R $ and suppose $ M $ is a finitely generated $ R $-module such that $ IM \neq M $. 
		Then the $ I $-depth of $ M $, denoted by $ \depth_I(M) $, is the maximal length of a $ M $-regular sequence in $ I $. 
		When $R$ is a local ring, the depth of $ M $ as an $R$-module, denoted by $ \depth(M) $, is defined to be
		\[
		\depth(M) := \depth_\frakm(M)
		\]
		where $ \frakm $ is the maximal ideal of $ R $.
	\end{defn}
	
	When $R$ is a commutative Noetherian regular local ring, $\depth(R)$ coincides with the Krull dimension $\dim(R)$. In particular, when $G \simeq \Z_p^n$ for some positive integer $n$ we have
	\[\depth(\Lambda(G)) = \dim(\Lambda(G)) = n + 1.\]
	
	\begin{prop} \label{lem:H1=0}
		Let $\Gamma \simeq \Z_p$ and $M$ be a compact $\Lambda(\Gamma)$-module such that $H_1(\Gamma, M) = 0$. Then $M$ does not have any non-trivial finite submodules.
	\end{prop}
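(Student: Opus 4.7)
The plan is to translate the vanishing of $H_1(\Gamma, M)$ into the statement that multiplication by $T := \gamma - 1$ is injective on $M$, where $\gamma$ is a topological generator of $\Gamma$. Under the identification $\Lambda(\Gamma) \cong \Z_p\llbracket T \rrbracket$, the standard short free resolution
\[
0 \to \Lambda(\Gamma) \xrightarrow{T} \Lambda(\Gamma) \to \Z_p \to 0
\]
of the trivial module $\Z_p$ computes continuous homology via the two-term complex $M \xrightarrow{T} M$, yielding $H_0(\Gamma, M) = M/TM$ and $H_1(\Gamma, M) = M[T]$. Hence the hypothesis is equivalent to $M[T] = 0$, i.e.\ to the injectivity of $T$ acting on $M$.

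Suppose for contradiction that $N \subseteq M$ is a non-zero finite $\Lambda(\Gamma)$-submodule. The Iwasawa algebra $\Lambda(\Gamma)$ is a regular local ring with maximal ideal $\frakm = (p, T)$ and residue field $\F_p$, and $N$ is of finite length as a $\Lambda(\Gamma)$-module. A composition-series argument then produces an integer $n \geq 1$ with $\frakm^n N = 0$; in particular $T^n N = 0$. On the other hand, the first step shows that $T$, and hence $T^n$, acts injectively on $M$, so the restriction of $T^n$ to $N$ is also injective. Combining this with $T^n N = 0$ forces $N = 0$, contradicting the non-triviality of $N$.

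There is no substantial obstacle to this argument: everything reduces to the two standard inputs that $H_1(\Gamma, M) \simeq M[T]$ for a procyclic group $\Gamma$, and that any finite module over a Noetherian local ring is annihilated by a power of the maximal ideal. The conclusion then follows in a single line.
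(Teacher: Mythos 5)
Your proof is correct, but it takes a different route from the paper. You reduce everything to the elementary observation that $H_1(\Gamma,M)\simeq M[T]$ (via the resolution $0\to\Lambda(\Gamma)\xrightarrow{\,T\,}\Lambda(\Gamma)\to\Z_p\to 0$), so the hypothesis says $T$ acts injectively on $M$, while any finite submodule $N$ is killed by a power of the maximal ideal $\frakm=(p,T)$ and hence by a power of $T$; this forces $N=0$. The paper instead uses the same first step only to conclude that $T$ is $M$-regular, i.e.\ $\depth(M)\geq 1$, then invokes the Auslander--Buchsbaum formula $\pd_{\Lambda(\Gamma)}(M)+\depth(M)=2$ to get $\pd_{\Lambda(\Gamma)}(M)\leq 1$, and finally applies Venjakob's result (Proposition \ref{prop:Ven}) that a compact module of projective dimension at most $1$ has no non-trivial pseudo-null submodules; for $\Gamma\simeq\Z_p$ pseudo-null coincides with finite, so the statement follows. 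The trade-off: your argument is self-contained, avoids the commutative-algebra machinery, and does not even require $M$ to be finitely generated (which the depth/Auslander--Buchsbaum route implicitly does), whereas the paper's route is the one that scales to higher-dimensional situations --- the same depth-and-projective-dimension toolkit is reused later for $G_\infty\simeq\Z_p^2$, where ``finite'' must be replaced by ``pseudo-null'' and your annihilation-by-$T^n$ trick no longer suffices. Your argument is essentially the elementary alternative the paper alludes to by citing Lemma A.1.5 of Coates--Sujatha.
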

	
	\begin{proof}
		Fix an isomorphism $\Lambda(\Gamma) \simeq \Z_p \llbracket T \rrbracket$. Let $M[T] := \{m \in M \mid Tm = 0\}.$ Since $\Lambda(\Gamma)/T \Lambda(\Gamma) \simeq \Z_p$, it follows from definition and basic properties of $H_1(\Gamma, -)$
		that $H_1(\Gamma, M) = \Tor_1^{\Lambda(\Gamma)}(M, \Z_p) = M[T]$ and thus $M[T] = 0$ under our hypothesis.
		
		Moreover, if $M/TM = 0$ then $M/(T, p) M = 0$, which implies $M = 0$ by Nakayama's lemma for the local ring $\Lambda(\Gamma)$. Therefore, $M/TM \neq 0$ 
		and $T$ is an $M$-regular sequence. 
		Thus $\depth(M) \geq 1$, which means $\pd_{\Lambda(\Gamma)} M \leq 1$ using the Auslander-Buchsbaum formula \cite[Theorem 19.9]{Eis95}:
		\[\pd_{\Lambda(\Gamma)} (M) + \depth(M) = \dim(\Lambda(\Gamma)) = 2.\]
		The result now follows from Proposition \ref{prop:Ven}. An alternative proof is also given in \cite[Lemma A.1.5]{CS00}. 
	\end{proof}
	
	Using Proposition \ref{lem:H1=0}, we obtain
	\begin{thm} \cite[Theorem 3.11]{CS00} \label{thm:tor}
		Assume that \begin{enumerate}
			\item $p > 2$,
			\item $\Sel_p(E/K)$ is finite,
			\item For each $P \neq 0$ in $E(K)(p)$, there exists a place $v$ in $K$ dividing $p$ such that the reduction of $P$ modulo $v$ is nonzero.
		\end{enumerate}
		Then $H^1(\Gamma_\ac, \Sel_p(E/K_\ac)) = 0$, and the module $X(E/K_\ac)$ does not have any non-trivial finite $\Lambda(\Gamma_\ac)$-submodules.
	\end{thm}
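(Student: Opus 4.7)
The plan is to reduce the non-existence of non-trivial finite $\Lambda(\Gamma_\ac)$-submodules to the $H^1$-vanishing via Proposition \ref{lem:H1=0}. Setting $X := X(E/K_\ac)$ and letting $\gamma$ be a topological generator of $\Gamma_\ac \cong \Z_p$, Pontryagin duality applied to the four-term exact sequence
\[
0 \to X^{\Gamma_\ac} \to X \xrightarrow{\gamma-1} X \to X_{\Gamma_\ac} \to 0
\]
identifies $H_1(\Gamma_\ac, X) = X^{\Gamma_\ac}$ with the Pontryagin dual of $H^1(\Gamma_\ac, \Sel_p(E/K_\ac))$. Thus once the vanishing $H^1(\Gamma_\ac, \Sel_p(E/K_\ac)) = 0$ is in hand, Proposition \ref{lem:H1=0} with $M = X$ gives the second assertion.

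To prove this vanishing, I would follow the strategy of \cite[Theorem 3.11]{CS00}, adapted to the anticyclotomic setting. Taking $\Gamma_\ac$-cohomology of the defining sequence
\[
0 \to \Sel_p(E/K_\ac) \to H^1(G_S(K_\ac), E_{p^\infty}) \to \mathcal{Y} \to 0,
\]
where $\mathcal{Y}$ is the image in $\bigoplus_{v \in S} J_v(E/K_\ac)$, yields the exact sequence
\[
\coker\!\bigl(H^1(G_S(K_\ac), E_{p^\infty})^{\Gamma_\ac} \to \mathcal{Y}^{\Gamma_\ac}\bigr) \to H^1(\Gamma_\ac, \Sel_p(E/K_\ac)) \to H^1(\Gamma_\ac, H^1(G_S(K_\ac), E_{p^\infty})),
\]
so it suffices to annihilate both outer terms. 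For the right-hand term, the Hochschild-Serre spectral sequence applied to $1 \to G_S(K_\ac) \to G_S(K) \to \Gamma_\ac \to 1$, together with $\mathrm{cd}_p(\Gamma_\ac) = 1$, embeds it into $H^2(G_S(K), E_{p^\infty})$. For the cokernel term, I would compare with the defining sequence over $K$ in a control-theorem-style commutative diagram: hypothesis (2) ensures that the restriction $\Sel_p(E/K) \to \Sel_p(E/K_\ac)^{\Gamma_\ac}$ has finite kernel and cokernel, while hypothesis (3), giving the injectivity $E(K)(p) \hookrightarrow \bigoplus_{v \mid p} \widetilde{E}_v(k_v)(p)$, controls the local terms, so a diagram chase yields the surjectivity $H^1(G_S(K_\ac), E_{p^\infty})^{\Gamma_\ac} \twoheadrightarrow \mathcal{Y}^{\Gamma_\ac}$.

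The main obstacle I anticipate is verifying the vanishing of $H^2(G_S(K), E_{p^\infty})$, or at least of its subgroup $H^1(\Gamma_\ac, H^1(G_S(K_\ac), E_{p^\infty}))$, under the present hypotheses. Poitou-Tate global duality relates $H^2(G_S(K), E_{p^\infty})$ to a fine-Selmer-style invariant that is controlled by hypothesis (2). A further delicate point is that in the anticyclotomic setting the local terms $J_v(E/K_\ac)$ at primes $v \mid p$ behave differently from the cyclotomic case: $K_\ac/K$ is ramified at such primes and, when $p$ splits in $K/\Q$, complex conjugation interchanges the two primes above $p$ so that $\Gamma_\ac$ mixes their local contributions nontrivially. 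Hypothesis (3) is tailored precisely to disentangle these local issues by pinning down at least one prime with injective reduction on $E(K)(p)$; hypothesis (1), $p > 2$, ensures that the cohomological-dimension-one argument for $\Gamma_\ac$ runs without parity subtleties in the Poitou-Tate step.
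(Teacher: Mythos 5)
Your overall architecture coincides with the paper's: the paper proves this theorem by transporting the cyclotomic argument of \cite[Theorem 3.11]{CS00} to $K_\ac$ and then applying Proposition \ref{lem:H1=0}, and your duality identification $H_1(\Gamma_\ac, X(E/K_\ac)) \cong H^1(\Gamma_\ac, \Sel_p(E/K_\ac))^\vee$, together with the Hochschild--Serre embedding of $H^1(\Gamma_\ac, H^1(K_S/K_\ac, E_{p^\infty}))$ into $H^2(K_S/K, E_{p^\infty})$ (which vanishes under hypothesis (2)), are exactly the right ingredients for that argument.

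The gap is in the surjectivity step. You claim that a control-theorem-style diagram chase, with hypothesis (2) supplying finiteness of the kernel and cokernel of $\Sel_p(E/K)\to\Sel_p(E/K_\ac)^{\Gamma_\ac}$ and hypothesis (3) ``controlling the local terms'', yields $H^1(K_S/K_\ac, E_{p^\infty})^{\Gamma_\ac}\twoheadrightarrow \mathcal{Y}^{\Gamma_\ac}$. That chase does not close as stated, because under the present hypotheses the global-to-local map over the base field, $\lambda\colon H^1(K_S/K, E_{p^\infty})\to\bigoplus_{v\in S}H^1(K_v,E)(p)$, is in general \emph{not} surjective: by Cassels' variant of the Poitou--Tate sequence (the analogue over $K$ of \eqref{eqn:Sel}), once $H^2(K_S/K,E_{p^\infty})=0$ its cokernel is dual to the compact Selmer group $\calS_p(E/K)$, which hypothesis (2) identifies with $E(K)\otimes\Z_p=E(K)(p)$, and this may be non-zero. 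What actually has to be shown is that $\Ima(\lambda)+\ker(\gamma)$ exhausts $\bigoplus_{v\in S}H^1(K_v,E)(p)$, i.e.\ that the part of $\ker(\gamma)$ at the ramified primes $v\mid p$ (governed by $\widetilde{E}_v(k_v)(p)$) surjects onto $\coker(\lambda)\cong E(K)(p)^\vee$; this is precisely where hypothesis (3) enters, via the compatibility of Cassels' duality with the reduction maps $E(K)(p)\to\widetilde{E}_v(k_v)$, and not as a device for handling how $\Gamma_\ac$ mixes the primes above $p$. (Note also that finiteness of the kernel and cokernel of the control map is unconditional; hypothesis (2) is used for the $H^2$-vanishing and for the dual description of $\coker(\lambda)$, and it also makes $\lambda_\ac$ surjective, so that $\mathcal{Y}=\bigoplus_{v\in S}J_v(E/K_\ac)$, while the surjectivity of the local maps $\gamma_v$ in the anticyclotomic setting follows as in Theorem \ref{thm:gamma}.) Supplying this Cassels-theorem input, which is the heart of the proof of \cite[Theorem 3.11]{CS00}, is what your outline is missing.
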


	The argument of Theorem \ref{thm:Ber-tor} no longer holds in the indefinite setting because $X(E/K_\ac)$ is not $\Lambda(\Gamma_\ac)$-torsion. Nonetheless, Bertolini was able to show that $X(E/K_\ac)$ does not have any non-trivial finite $\Lambda(\Gamma_\ac)$-submodules under certain hypotheses. 
	
	For a prime $v$ in $K$ above $p$, denote by $\widetilde{E}_{v}$ the reduction of $E/K$ modulo $v$ and let $k_v$ be the residue field of $K_v$. Moreover, let $d_v = \# \widetilde{E}_v(k_v)$. For each prime $\ell \mid N$, denote by $c_\ell(E/\Q)$ the Tamagawa factor of $E/\Q$ at $\ell$. For an integer $n$, let $n^{(p)}$ be the largest prime power that divides $n$.
	
	\begin{thm} \cite[Theorem 7.1]{Ber01} \label{thm:Ber2}
		Assume that \begin{enumerate}
			\item $p \nmid 6 N \disc(K) \#\Pic(\calO_K),$
			\item $\overline{\rho}_{E, p}: G_\Q \rightarrow Aut(E_p)$ is surjective,
			\item $p \nmid \prod_{v \mid p} \# \widetilde{E}_v(k_v) \cdot \prod_{\ell \mid N} c_\ell(E/\Q),$
			\item The module of Heegner points over $K_n$ is non-zero modulo $pE(K_n)$ for some $n \geq 0$,
			\item The natural map $\Sha(E/K_n) \rightarrow \Sha(E/K_{n + 1})$ is an injection for all $n \geq 0$.
		\end{enumerate}
		Then $X(E/K_\ac)$ does not have any non-trivial finite $\Lambda(\Gamma_\ac)$-submodule.
	\end{thm}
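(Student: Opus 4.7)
The plan is to reduce the claim to the vanishing
\[H^1(\Gamma_\ac, \Sel_p(E/K_\ac)) = 0,\]
and then apply Proposition \ref{lem:H1=0} to the dual module $M = X(E/K_\ac)$. In the definite setting, Theorem \ref{thm:tor} proceeds by exploiting the torsion-ness of $X(E/K_\ac)$; in the indefinite case this input is unavailable, so the Heegner-point hypothesis (4) and the $\Sha$-injectivity hypothesis (5) must jointly substitute for the missing torsion property.

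First I would set up the standard control diagram comparing the restriction $\Sel_p(E/K_n) \to \Sel_p(E/K_\ac)^{\Gal(K_\ac/K_n)}$ at each finite layer $K_n \subset K_\ac$. The surjectivity of $\overline{\rho}_{E,p}$ in hypothesis (2) forces $E(K_\ac)[p] = 0$, killing the global $H^0$-obstruction, while hypotheses (1) and (3) eliminate the local defect terms: $p \nmid \prod_{v \mid p} \#\widetilde{E}_v(k_v)$ trivializes the formal-group contribution at primes above $p$; $p \nmid \prod_{\ell \mid N} c_\ell(E/\Q)$ handles component groups at bad primes; and $p \nmid \#\Pic(\calO_K) \cdot \disc(K)$ ensures that the primes ramifying in $K_\ac/K$ make no contribution. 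This yields a clean control theorem with trivial kernel and cokernel, hence a good handle on $\Sel_p(E/K_\ac)^{\Gamma_\ac}$.

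Second, the $\Sha$-injectivity hypothesis (5) is used to interchange direct and inverse limits cleanly along the tower, so that the control-theoretic identification of invariants is matched by a corresponding identification of coinvariants $X(E/K_\ac)_{\Gamma_\ac}$. Hypothesis (4) then produces a Heegner element $y_n \in E(K_n)$ whose image in $E(K_n)/pE(K_n)$ is non-zero; the associated norm-coherent system embeds a free rank-one $\Lambda(\Gamma_\ac)$-module into the compact Selmer group $\calS_p(E/K_\ac)$. Dualizing, this splits off (up to pseudo-isomorphism) a free rank-one $\Lambda(\Gamma_\ac)$-quotient of $X(E/K_\ac)$, with torsion complement $T$. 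The cohomology $H^1(\Gamma_\ac, -)$ vanishes on the free rank-one Heegner quotient because $\Gamma_\ac$ acts without fixed vectors; on $T$ it vanishes by a direct application of the argument underlying Theorem \ref{thm:tor}, using hypotheses (1)--(3) to rule out $p$-torsion growth along the tower. Combining these, $H^1(\Gamma_\ac, \Sel_p(E/K_\ac)) = 0$, and Proposition \ref{lem:H1=0} completes the argument.

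The main obstacle I expect is precisely the third step: the decomposition of $X(E/K_\ac)$ into a rank-one Heegner part and a torsion complement is not literally a direct sum, so one must work with a short exact sequence and carefully chase the boundary map in $\Gamma_\ac$-cohomology. Poitou–Tate duality, together with hypothesis (5) ensuring compatibility of $\Sha$'s along the tower and hypothesis (4) ensuring non-triviality of Heegner classes mod $p$, is what forces this connecting homomorphism to vanish; this matches the technical heart of Bertolini's original argument in \cite{Ber01}.
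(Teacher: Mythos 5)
This statement is quoted in the paper from \cite[Theorem 7.1]{Ber01} and is not reproved there; the paper's own machinery for this situation is Theorem \ref{thm:main} and Theorem \ref{thm:Ber}, which reduce the non-existence of finite submodules to the single identity $U\calS_p(E/K)=\calS_p(E/K)$, i.e.\ to the surjectivity of the universal-norm map $\calS_p(E/K_\ac)_{\Gamma_\ac}\to\calS_p(E/K)$. Your first two steps are consistent with that framework: reducing to $H^1(\Gamma_\ac,\Sel_p(E/K_\ac))=0$ and invoking Proposition \ref{lem:H1=0} is exactly the paper's strategy, and the control-theoretic step using hypotheses (1)--(3) matches Corollary \ref{cor:coinv} and Corollary \ref{cor:isom} (these hypotheses make $\ker(\gamma)$ and $\ker(\delta)$ trivial, so Theorem \ref{thm:main} gives $\#H^1(\Gamma_\ac,\Sel_p(E/K_\ac))=[\calS_p(E/K):\Ima(\calS_p(E/K_\ac)_{\Gamma_\ac})]$).

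The genuine gap is in your third step, which is where hypotheses (4) and (5) must actually do their work. Splitting $X(E/K_\ac)$ ``up to pseudo-isomorphism'' into a free rank-one piece and a torsion complement cannot prove anything about finite submodules: finite modules are precisely what pseudo-isomorphism discards, and $H_1(\Gamma_\ac,-)$ is not insensitive to them (for instance $H_1(\Gamma_\ac,\Lambda\oplus\Z/p)\neq 0$), so the argument is circular --- the finite error terms you suppress are exactly the objects whose non-existence you are trying to establish. Likewise, the claim that $H_1$ vanishes on the torsion complement ``by the argument underlying Theorem \ref{thm:tor}'' fails: that theorem requires $\Sel_p(E/K)$ to be finite, which is false in the indefinite setting (the Mordell--Weil rank is $1$ by Remark \ref{rmk:Kol}), and its proof concerns the Selmer group itself, not an abstract torsion quotient of $X(E/K_\ac)$. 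Finally, the appeal to Poitou--Tate duality plus ``compatibility of $\Sha$'s'' to kill the connecting map is a statement of intent rather than a proof. What is actually needed, and what Bertolini's Theorem 7.1 supplies, is a Heegner-point argument showing that hypotheses (4) and (5) force $\calS_p(E/K)/U\calS_p(E/K)$ to be torsion-free (equivalently, in the presence of \ref{Heeg-pt}, that the universal norms exhaust $\calS_p(E/K)$); your proposal nowhere derives this from (4) and (5), so the decisive implication is missing.
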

	
	\begin{rmk}
		In Section \ref{sec:def}, we shall use Proposition \ref{prop:Ven} and Proposition \ref{lem:H1=0} to prove the non-existence of non-trivial pseudo-null submodules of $X(E/K_\infty)$. In Section \ref{sec:indef}, we shall use Proposition \ref{lem:H1=0} to prove the non-existence of non-trivial finite submodules of $X(E/K_\ac)$. Our arguments vastly simplify the proofs of such results in the existing literature.
	\end{rmk}
	
	\subsection{Exceptional case} We recall that in this case, $E/\Q$ is an elliptic curve with CM by $\calO_K$. We also note that in this case, $K$ must be an imaginary quadratic field of class number $1$ by CM theory. Moreover, $K_\infty \subset K(E_{p^\infty})$ and there is an isomorphism $\Gal(K(E_{p^\infty})/K) \simeq \Gal(K_\infty/K) \times \Gal(K(E_p)/K)$.  It is a result due to Coates \cite{Coa83} that the dual Selmer group  $X(E/K_\infty)$ is always $\Lambda(G_\infty)$-torsion. When the sign of the functional equation of $L(E/\Q, s)$ is $+1$, the module $X(E/K_\ac)$ is $\Lambda(\Gamma_\ac)$-torsion \cite[Theorem 3]{Gre83}. Otherwise, when the sign of the functional equation of $L(E/\Q, s)$ is $-1$, the module $X(E/K_\ac)$ has $\Lambda(\Gamma_\ac)$-rank $2$ \cite[Theorem A]{AH06}. In fact, it was known to Greenberg that $X(E/K_\ac)$ is not $\Lambda(\Gamma_\ac)$-torsion when the sign is $-1$ \cite[Theorem 2]{Gre83}.
	
	Now, let $M_\infty$ be the maximal pro-$p$ abelian extension over $K_\infty$ that is unramified outside $p$. The Galois group $\Gal(M_\infty/K_\infty)$ has a natural $\Lambda(G_\infty)$-module structure and was studied by Greenberg \cite{Gre78}. The module structures of $X(E/K_\infty)$ and $\Gal(M_\infty/K_\infty)$ are related by the following result of Perrin-Riou:
	
	\begin{thm}\cite[Th\'eor\`eme 23]{P-R84}
		Suppose that the module $X(E/K_\infty)$ is $\Lambda(G_\infty)$-torsion. Then $X(E/K_\infty)$ does not have any non-trivial finite $\Lambda(G_\infty)$-submodules if and only if $\Gal(M_\infty/K_\infty)$ does not have any non-trivial finite $\Lambda(G_\infty)$-submodules.
	\end{thm}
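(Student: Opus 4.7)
The plan is to construct a four-term exact sequence of $\Lambda(G_\infty)$-modules relating $X(E/K_\infty)$ to a twist of $X_\infty := \Gal(M_\infty/K_\infty)$, in which the two outer error terms (coming from the primes above $p$) can be shown to carry no non-trivial finite submodules. The biconditional then follows by a short diagram chase, once one observes that twisting by a continuous character of $G_\infty$ preserves the property of having no non-trivial finite submodules.

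The first step would exploit the CM hypothesis. Since $E$ has complex multiplication by $\calO_K$ and is ordinary at $p$, the prime $p$ splits in $K$ as $\frakp \overline{\frakp}$ and one has a decomposition of $G_K$-modules
\[
E[p^\infty] \;\cong\; E[\frakp^\infty] \oplus E[\overline{\frakp}^\infty],
\]
each summand being a one-dimensional $\calO_{K,\frakq}$-divisible module on which $G_K$ acts via a Hecke character. Kummer theory applied to each summand, combined with class field theory, lets one rewrite the global cohomology $H^1(K_S/K_\infty, E[p^\infty])$ as a twist of $X_\infty$. Imposing the ordinary Selmer conditions at the two primes above $p$ then produces an exact sequence
\[
0 \to Y \to X(E/K_\infty) \to X_\infty(\psi) \to Z \to 0,
\]
where $\psi$ is the Galois character acting on one of the $\frakq$-summands and $Y, Z$ are semi-local modules built from the primes above $p$.

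The main technical step is to show that $Y$ and $Z$ have no non-trivial finite $\Lambda(G_\infty)$-submodules. Because the decomposition groups of the primes above $p$ are open in $G_\infty$ and the latter has $\Z_p$-rank $2$, the local modules can be written explicitly in terms of Iwasawa cohomology of $E[\frakq^\infty]$ over the completions $K_{\infty,w}$. A direct computation of $H_1$ of the decomposition groups, together with the Auslander-Buchsbaum formula employed in the proof of Proposition \ref{lem:H1=0}, should give $\pd_{\Lambda(G_\infty)}(Y) \leq 1$ and $\pd_{\Lambda(G_\infty)}(Z) \leq 1$; Proposition \ref{prop:Ven} then guarantees that both are free of non-trivial pseudo-null submodules, in particular of non-trivial finite submodules. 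The torsion hypothesis on $X(E/K_\infty)$ enters at this stage to rule out extra contributions to $Y$ and $Z$ from the global $H^2$ and to ensure the comparison morphism sits in a sequence of the stated shape.

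With those local terms controlled, the conclusion is a diagram chase: if $X_\infty$ (equivalently $X_\infty(\psi)$) has no non-trivial finite submodule, then neither does $X(E/K_\infty)$, since its finite submodules would inject into $Y$; conversely, if $X(E/K_\infty)$ has no non-trivial finite submodule, then any finite submodule of $X_\infty(\psi)$ would lift through $Z$ (which has none) and come from a finite submodule of $X(E/K_\infty)$. The principal obstacle is the explicit identification of $Y$ and $Z$ and the computation of their projective dimensions; everything else is formal manipulation with the exact sequence and the homological machinery of Section \ref{sec:literature}.
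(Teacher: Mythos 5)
First, note that the paper does not reprove this statement: it is quoted from Perrin-Riou \cite[Th\'eor\`eme 23]{P-R84}, so your proposal has to be judged against her argument. Your general strategy is in the right family of ideas (the CM splitting $E_{p^\infty}=E_{\frakp^\infty}\oplus E_{\overline{\frakp}^\infty}$, Kummer theory and class field theory relating the Selmer group over $K_\infty$ to $\Hom$'s out of Galois groups of extensions unramified outside $p$, with error terms at the primes above $p$), but as written there are two genuine gaps. The decisive one is the converse direction of your diagram chase: from a sequence $0\to Y\to X(E/K_\infty)\to X_\infty(\psi)\to Z\to 0$ in which $Y$, $Z$ and $X(E/K_\infty)$ have no non-trivial finite submodules, you cannot conclude the same for $X_\infty(\psi)$. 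A finite submodule of $X_\infty(\psi)$ that dies in $Z$ lands in the image $X(E/K_\infty)/Y$, and a finite submodule of a quotient need not lift: already $0\to(p,T)\to\Z_p\llbracket T\rrbracket\to\F_p\to 0$ shows that a quotient of a module with no finite submodules by a submodule with no finite submodules can be finite and non-zero. To run this direction one needs strictly finer homological control of the local terms than ``no finite submodules'' --- for instance, over the three-dimensional ring $\Lambda(G_\infty)$ the maximal finite submodule of a finitely generated module $M$ is dual (by local duality) to $\Ext^3_{\Lambda(G_\infty)}(M,\Lambda(G_\infty))$, and one must chase these Ext groups through the comparison sequence using vanishing statements (projective-dimension bounds, reflexivity, or freeness of the semi-local terms). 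This is exactly where the torsion hypothesis and Perrin-Riou's structural results do real work; in your write-up the torsion hypothesis only ``rules out extra contributions'', which does not close this hole.

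The second gap is that the four-term sequence is asserted rather than derived, and its precise shape is the substance of the theorem. The Kummer-theoretic identification takes place over $K_\infty(E_{p^\infty})$, and one descends through the finite group $\Gal(K(E_{p^\infty})/K_\infty)$ of order prime to $p$; moreover the natural arrow goes the other way from what you wrote, namely one dualizes an inclusion of $\Sel_p(E/K_\infty)$ into a $\Hom(\cdot,E_{\frakp^\infty})$-module to obtain a map from a twist of the Iwasawa module onto $X(E/K_\infty)$, with kernel and cokernel built from semi-local cohomology at $p$ and unit/$H^0$ terms. Identifying those terms, comparing the maximal abelian $p$-extension of $K_\infty(E_{p^\infty})$ unramified outside $p$ with $\Gal(M_\infty/K_\infty)$ itself, and proving the required vanishing for the error terms is the actual content of \cite[Th\'eor\`eme 23]{P-R84}; the diagram chase at the end is the easy part, and even it needs the repair described above.
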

	
	Greenberg showed that $\Gal(M_\infty/K_\infty)$ contains no non-trivial pseudo-null submodules \cite{Gre78}, from which it follows that
	
	\begin{thm} \label{thm:CM}
		The $\Lambda(G_\infty)$-module $X(E/K_\infty)$ does not have any non-trivial pseudo-null $\Lambda(G_\infty)$-submodules.
	\end{thm}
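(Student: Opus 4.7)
The plan is to deduce Theorem \ref{thm:CM} directly from the three preceding results, without introducing any new machinery. First, I would apply Coates' theorem \cite{Coa83} to conclude that $X(E/K_\infty)$ is $\Lambda(G_\infty)$-torsion in the CM setting; this places us in the hypothesis of Perrin-Riou's theorem. Second, I would invoke Greenberg's result from \cite{Gre78} that $\Gal(M_\infty/K_\infty)$ contains no non-trivial pseudo-null $\Lambda(G_\infty)$-submodules. Third, Perrin-Riou's correspondence would then transfer the absence of such submodules from $\Gal(M_\infty/K_\infty)$ to $X(E/K_\infty)$, yielding the claim.

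The one subtle point is that Perrin-Riou's theorem as recalled just above is formulated for \emph{finite} submodules, whereas Theorem \ref{thm:CM} asks for the absence of \emph{pseudo-null} submodules. In the present setting, $G_\infty \simeq \Z_p^2$ makes $\Lambda(G_\infty)$ a three-dimensional regular local ring, so pseudo-null modules (those supported in codimension at least two) need not be finite, and the desired implication is therefore not formal. Consequently, the delicate step of the proof is to appeal to the stronger pseudo-null version of Perrin-Riou's correspondence, which is the one actually established in \cite[Th\'eor\`eme 23]{P-R84}; granted this, the argument reduces to a direct three-line citation chain.
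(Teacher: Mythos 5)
Your proposal matches the paper's own argument, which is exactly this citation chain: Coates' torsion result, Greenberg's theorem that $\Gal(M_\infty/K_\infty)$ has no non-trivial pseudo-null $\Lambda(G_\infty)$-submodules, and Perrin-Riou's comparison theorem, with no further machinery. Your observation that the comparison theorem as quoted in the paper speaks only of \emph{finite} submodules---which is indeed insufficient over the three-dimensional ring $\Lambda(G_\infty)$---points at a gap in the paper's exposition rather than in your argument, and your resolution (invoking the pseudo-null form of Perrin-Riou's correspondence) is precisely what the paper implicitly relies on when it asserts that the theorem ``follows'' from Greenberg's result.
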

	
	For each $\alpha \in \calO_K$, let $E_\alpha$ denote the kernel of $\alpha$ as a $K$-endomorphism of $E$. For an ideal $\fraka \subset \calO_K$, denote by $E_{\fraka}$ the $\fraka$-torsion points of $E$, i.e. $E_\fraka = \cap_{\alpha \in \fraka} E_\alpha.$ Now, denote by $E_{\frakp^\infty} = \cup_{n \geq 0} E_{\frakp^n}$. 
	
	Assuming the Leopoldt conjecture holds for $K(E_{\frakp})$ and the weak Leopoldt conjecture holds for $K(E_{\frakp^\infty})/K(E_{\frakp})$ \cite[p. 44]{P-R84}, we have the following result.
	
	\begin{thm}\cite[Th\'eor\`eme 25]{P-R84} 
		The $\Lambda(\Gamma_\ac)$-module $X(E/K_\ac)$ does not have any non-trivial finite $\Lambda(\Gamma_\ac)$-submodules.
	\end{thm}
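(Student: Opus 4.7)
The plan is to descend from the $\Z_p^2$-extension $K_\infty$ down to the anticyclotomic line $K_\ac$, using Theorem \ref{thm:CM} as the input and Proposition \ref{lem:H1=0} as the finishing tool. Write $H_\ac = \Gal(K_\infty/K_\ac) \simeq \Z_p$ with topological generator $\gamma_\ac$, so that $G_\infty \simeq \Gamma_\ac \times H_\ac$. The first step is to set up a control diagram for the extension $K_\infty/K_\ac$: applying the inflation-restriction sequence to $H^i(K_S/-, E_{p^\infty})$ and to the local terms $J_v(E/-)$, and combining them via the snake lemma, one obtains an exact sequence
\[0 \to \ker \to \Sel_p(E/K_\ac) \to \Sel_p(E/K_\infty)^{H_\ac} \to \coker \to 0\]
with $\ker$ and $\coker$ governed by $H^i(H_\ac, -)$ of the relevant local and global cohomology. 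The Leopoldt hypothesis for $K(E_\frakp)$ together with the weak Leopoldt hypothesis for $K(E_{\frakp^\infty})/K(E_\frakp)$, coupled with the splitting $\Gal(K(E_{p^\infty})/K) \simeq \Gal(K_\infty/K) \times \Gal(K(E_p)/K)$, forces $H^2(K_S/K_\infty, E_{p^\infty}) = 0$ and makes these error terms finite as $\Lambda(\Gamma_\ac)$-modules, yielding a pseudo-isomorphism $X(E/K_\infty)_{H_\ac} \to X(E/K_\ac)$ after Pontryagin duality.

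The second step is to deduce $H^1(H_\ac, \Sel_p(E/K_\infty)) = 0$ from Theorem \ref{thm:CM}. This group is Pontryagin dual to $H_1(H_\ac, X(E/K_\infty)) = X(E/K_\infty)^{H_\ac}$, so what is required is that $\gamma_\ac - 1$ act injectively on $X(E/K_\infty)$. The Hochschild-Serre spectral sequence for $G_\infty = \Gamma_\ac \times H_\ac$ acting on $\Sel_p(E/K_\infty)$, together with the vanishing $H^1(H_\ac, \Sel_p(E/K_\infty)) = 0$ and the pseudo-isomorphism from Step 1, then implies that $H^1(\Gamma_\ac, \Sel_p(E/K_\ac))$ vanishes after a careful accounting of the finite error terms. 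Proposition \ref{lem:H1=0} applied to $X(E/K_\ac)$ now gives the non-existence of non-trivial finite $\Lambda(\Gamma_\ac)$-submodules.

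The main obstacle is establishing $X(E/K_\infty)^{H_\ac} = 0$. While Theorem \ref{thm:CM} forbids pseudo-null $\Lambda(G_\infty)$-submodules of $X(E/K_\infty)$, it does not by itself rule out a submodule supported at the height-one prime $(\gamma_\ac - 1)$, which is exactly the kind of submodule that would produce non-trivial $H_\ac$-invariants. Excluding this possibility requires CM-specific input: the Kummer-theoretic relationship between $X(E/K_\infty)$ and the classical Iwasawa module $\Gal(M_\infty/K_\infty)$ recalled in the discussion preceding Theorem \ref{thm:CM} reduces the claim to the assertion that the characteristic ideal of the torsion part of $\Gal(M_\infty/K_\infty)$ is coprime to $(\gamma_\ac - 1)$, which follows from Greenberg's foundational work under the stated Leopoldt-type hypotheses. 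Everything else in the argument is formal descent together with the two propositions already isolated in the preliminaries.
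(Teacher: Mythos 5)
Your plan has a fatal gap at precisely the point you flag as ``the main obstacle'': the claim $H^1(H_\ac,\Sel_p(E/K_\infty))=0$, i.e. $H_1(H_\ac,X(E/K_\infty)) = X(E/K_\infty)[\gamma_\cyc-1]=0$ (note that the topological generator of $H_\ac\simeq\Gamma_\cyc$ is $\gamma_\cyc$, not $\gamma_\ac$), is \emph{false} in exactly the case this theorem is meant to cover. Indeed, $X(E/K_\infty)$ is always $\Lambda(G_\infty)$-torsion by Coates, while $X(E/K_\ac)\simeq X(E/K_\infty)_{H_\ac}$ has $\Lambda(\Gamma_\ac)$-rank $2$ when the sign of the functional equation is $-1$ (Agboola--Howard, as recalled in this section). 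If $\gamma_\cyc-1$ acted injectively on $X(E/K_\infty)$, then any nonzero annihilator $f$ of this torsion module could be written $f=(\gamma_\cyc-1)^k g$ with $(\gamma_\cyc-1)\nmid g$, and injectivity would force $gX(E/K_\infty)=0$; the nonzero image of $g$ in $\Lambda(\Gamma_\ac)$ would then annihilate the coinvariants, making $X(E/K_\ac)$ $\Lambda(\Gamma_\ac)$-torsion --- a contradiction. So no ``CM-specific input'' can deliver the coprimality of the characteristic ideal to the augmentation ideal of $H_\ac$ that you invoke; Theorem \ref{thm:CM} rightly does not imply it, and nothing does. Two further weaknesses: your Step 1 only produces a pseudo-isomorphism $X(E/K_\infty)_{H_\ac}\to X(E/K_\ac)$, which by its nature cannot control finite submodules (Perrin-Riou's Lemme 9 and Proposition 12 give an honest isomorphism, and that is what is needed); and even granting your Step 2, the passage from $H^1(H_\ac,\Sel_p(E/K_\infty))=0$ to $H^1(\Gamma_\ac,\Sel_p(E/K_\ac))=0$ via Hochschild--Serre is not justified --- the relevant $E_2$-term $H^1(\Gamma_\ac,\Sel_p(E/K_\infty)^{H_\ac})$ is bounded by $H^1(G_\infty,\Sel_p(E/K_\infty))$, which you have no reason to control here.

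The paper's proof avoids any statement about $H^1(\Gamma_\ac,\Sel_p(E/K_\ac))$ and Proposition \ref{lem:H1=0} altogether, and its input is strictly stronger than Theorem \ref{thm:CM}: under the Leopoldt hypotheses, Perrin-Riou proves $\pd_{\Lambda(G_\infty)}(X(E/K_\infty))\leq 1$ (absence of pseudo-null submodules does not bound the projective dimension, so this cannot be substituted by Theorem \ref{thm:CM}). One then uses the isomorphism $X(E/K_\ac)\simeq X(E/K_\infty)_{H_\ac}$ together with the descent lemma for projective dimension (Lemme 5 of \cite{P-R84}) to get $\pd_{\Lambda(\Gamma_\ac)}(X(E/K_\ac))\leq 1$, and concludes by Proposition \ref{prop:Ven} over $\Lambda(\Gamma_\ac)$. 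This route works uniformly whether or not $X(E/K_\ac)$ is $\Lambda(\Gamma_\ac)$-torsion, which is essential since the non-torsion (sign $-1$) case is included in the statement.
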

	\begin{proof} Under the condition that the Leopoldt conjecture holds for $K(E_{\frakp})$ and the weak Leopoldt conjecture holds for $K(E_{\frakp^\infty})/K(E_{\frakp})$, Perrin-Riou showed $\pd_{\Lambda(G_\infty)}(X(E/K_\infty)) \leq 1$. 
		
		Moreover, there is an isomorphism $X(E/K_\ac) \simeq X(E/K_\infty)_{H_\ac}$ by \cite[Lemme 9, Proposition 12]{P-R84}. Therefore,
		\[\pd_{\Lambda(\Gamma_\ac)}(X(E/K_\ac)) = \pd_{\Lambda(\Gamma_\ac)}(X(E/K_\infty)_{H_\ac}) = \pd_{\Lambda(G_\infty)}(X(E/K_\infty)) \leq 1\]
		by Lemme 5 in \cite{P-R84}.
		The conclusion now follows from another application of Proposition \ref{prop:Ven}.  
	\end{proof}

	\section{Galois cohomology of the Selmer group}
	\subsection{The definite setting} \label{sec:def}		
	Recall that in this setting, $N^{-}$ is a square-free product of an odd number of primes \cite{BD05}. We work under the extra hypothesis that $\Sel_p(E/K)$ is finite.
	\begin{hypo} \label{hypo:def}
		Assume that the following hold:
		\begin{itemize}
			\item[\mylabel{surj}{(surj)}] The residual Galois representation $\overline{\rho}_{E, p}: G_\Q \rightarrow Aut(E_p)$ is surjective.
			\item[\mylabel{finite}{(finite)}] The Selmer group $\Sel_p(E/K)$ is finite.
		\end{itemize}
	\end{hypo}
	
	Hypothesis \ref{finite} implies that $X(E/K_\ac)$ is $\Lambda(\Gamma_\ac)$-torsion, because the restriction map $\Sel_p(E/K) \rightarrow \Sel_p(E/K_\ac)^{\Gamma_\ac}$ is a pseudo-isomorphism. 
	
	Under some mild hypotheses, we show that $\Sel_p(E/K_\ac) \rightarrow \Sel_p(E/K_\infty)^{H_\ac}$ is an isomorphism and $H^1(H_\ac, \Sel_p(E/K_\infty)) = 0$.
	
	\begin{thm} \label{thm:coinv}
		Assume $p \nmid \prod_{v \mid p} \# \widetilde{E}_v(k_v)$.
		Then \begin{enumerate}
			\item The restriction map \[\Sel_p(E/K_\ac) \rightarrow \Sel_p(E/K_\infty)^{H_\ac}\] is an isomorphism,
			\item $H^1(H_\ac, \Sel_p(E/K_\infty)) = 0$.
		\end{enumerate}
	\end{thm}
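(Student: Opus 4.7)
The plan is the standard control-theorem approach. I would set up the commutative diagram
\[
\begin{array}{cccccc}
0 \to & \Sel_p(E/K_\ac) & \to & H^1(K_S/K_\ac, E_{p^\infty}) & \xrightarrow{\phi_\ac} & \bigoplus_{v \in S} J_v(E/K_\ac) \\
 & \downarrow s & & \downarrow h & & \downarrow g \\
0 \to & \Sel_p(E/K_\infty)^{H_\ac} & \to & H^1(K_S/K_\infty, E_{p^\infty})^{H_\ac} & \to & \bigoplus_{v \in S} J_v(E/K_\infty)^{H_\ac}
\end{array}
\]
and show $h$ and $g$ are isomorphisms; part (1) then follows from the five-lemma, and part (2) from a long-exact-sequence argument combined with Pontryagin duality.

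The global input is $E(K_\infty)_{p^\infty} = 0$. Hypothesis \ref{surj} gives $\Ima(\overline{\rho}_{E,p}) = \text{GL}_2(\F_p)$; since $[K:\Q]=2$ and $K_\infty/K$ is pro-$p$ abelian, the image of $G_{K_\infty}$ in $\text{GL}_2(\F_p)$ contains the commutator of the image of $G_K$, which for $p \geq 5$ still contains the perfect group $\text{SL}_2(\F_p)$. Thus $E_p^{G_{K_\infty}} = 0$, and so $E(K_\infty)_{p^\infty} = 0$. Inflation-restriction for $1 \to G_{K_\infty} \to G_{K_\ac} \to H_\ac \to 1$, together with $\text{cd}_p(H_\ac) = 1$ as $H_\ac \cong \Z_p$, shows that $h$ is an isomorphism.

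For the local arrow $g$, at each $v \in S$ with primes $w | v$ of $K_\ac$, $w' | w$ of $K_\infty$, and decomposition group $D_{w'/w} \subseteq H_\ac$, inflation-restriction gives $H^1(K_{\ac,w}, E)(p) \twoheadrightarrow H^1(K_{\infty, w'}, E)(p)^{D_{w'/w}}$ with kernel $H^1(D_{w'/w}, E(K_{\infty, w'}))(p)$. For $v \mid p$, where $D_{w'/w} = H_\ac$ since the cyclotomic direction is totally ramified above $p$, I would filter via the ordinary-reduction exact sequence $0 \to \hat{E}(\frakm_{w'}) \to E(K_{\infty, w'}) \to \widetilde{E}_v(k_{w'}) \to 0$: the quotient's $H^1$-contribution is killed by the hypothesis $p \nmid \#\widetilde{E}_v(k_v)$, and the formal-group contribution vanishes by standard norm arguments in the pro-$p$ ramified tower. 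For $v \nmid p$, the local extension is unramified with residue characteristic $\neq p$ and $E(K_{\infty, w'})(p)$ is finite, giving the vanishing. Thus $g$ is an isomorphism, proving (1).

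For (2), a diagram chase (given $h$ and $g$ are isomorphisms) shows that $H^1(K_S/K_\infty, E_{p^\infty})^{H_\ac} \twoheadrightarrow \Ima(\phi_\infty)^{H_\ac}$: any $H_\ac$-invariant element of $\Ima(\phi_\infty)$ pulls back via $g^{-1}$ into $\Ima(\phi_\ac)$ and lifts through $h$. The $H_\ac$-cohomology long exact sequence of $0 \to \Sel_p(E/K_\infty) \to H^1(K_S/K_\infty, E_{p^\infty}) \to \Ima(\phi_\infty) \to 0$ then yields $H^1(H_\ac, \Sel_p(E/K_\infty)) \hookrightarrow H^1(H_\ac, H^1(K_S/K_\infty, E_{p^\infty}))$. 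Pontryagin duality identifies the left-hand side with $X(E/K_\infty)^{H_\ac}$, which one argues is a pseudo-null $\Lambda(G_\infty)$-submodule (annihilated by $\gamma_\cyc - 1$ and, thanks to Hypothesis \ref{finite}, by a characteristic element of the $\Lambda(G_\infty)$-torsion module $X(E/K_\infty)$ coprime to it), and hence vanishes by Proposition \ref{prop:Ven} once one bounds $\pd_{\Lambda(G_\infty)} X(E/K_\infty) \leq 1$ via Proposition \ref{lem:H1=0} and the Auslander-Buchsbaum formula. The main obstacle is the local analysis at $v \mid p$: the totally ramified cyclotomic tower demands a careful combination of the ordinary-reduction filtration with $p \nmid \#\widetilde{E}_v(k_v)$ to kill both pieces of local cohomology. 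A secondary difficulty is threading the no-pseudo-null-submodule argument for (2) so as not to circularly depend on later results of the section.
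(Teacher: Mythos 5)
Your part (1) is essentially the paper's argument: the same control diagram, with $E_{p^\infty}(K_\infty)=0$ supplied by \ref{surj} and the local kernels at $v\mid p$ killed by ordinarity together with $p\nmid \#\widetilde{E}_v(k_v)$ (the paper handles the formal-group piece by citing \cite[Proposition 4.8]{CG96} rather than a norm argument, and, like you, treats the places $v\nmid p$ briskly). The substantive problem is part (2), where your proposal has a genuine gap. The diagram chase you propose is circular: from $g$ and $h$ being isomorphisms you cannot conclude that an $H_\ac$-invariant element $z$ of $\Ima(\lambda_\infty)$ satisfies $g^{-1}(z)\in\Ima(\lambda_\ac)$. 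The class $g^{-1}(z)$ is merely a local class at the $K_\ac$-level, and the obstruction to lifting it globally is exactly a class in $H^1(H_\ac,\Sel_p(E/K_\infty))$ --- the group you are trying to kill. The paper supplies the missing input from Hypothesis \ref{finite}: finiteness of $\Sel_p(E/K)$ forces $X(E/K_\ac)$ to be $\Lambda(\Gamma_\ac)$-torsion, hence by \cite[Theorem 7.2]{HV03} the map $\lambda_\ac$ is surjective onto the full local group $\bigoplus_{v\in S}J_v(E/K_\ac)$; combined with $\beta$ and $\gamma$ being isomorphisms, this makes $\lambda_\infty^{H_\ac}$ surjective. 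Your write-up never invokes \ref{finite} at this step, yet statement (2) really rests on this cotorsion input.

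Your fallback route is also circular. To kill $H^1(H_\ac,\Sel_p(E/K_\infty))^\vee = H_1(H_\ac,X(E/K_\infty)) = X(E/K_\infty)[\gamma_\cyc-1]$ via Proposition \ref{prop:Ven} you need $\pd_{\Lambda(G_\infty)}X(E/K_\infty)\le 1$; over the three-dimensional ring $\Lambda(G_\infty)$ the Auslander--Buchsbaum formula requires depth at least $2$, i.e.\ a two-term regular sequence, and the regularity of the first term $T_\cyc=\gamma_\cyc-1$ on $X(E/K_\infty)$ is precisely the vanishing you are proving; indeed this is exactly how the paper proves the theorem following Theorem \ref{thm:coinv}, using Theorem \ref{thm:coinv} as input. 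Proposition \ref{lem:H1=0} only covers $\Z_p$-extensions and does not give the bound over $\Lambda(G_\infty)$, and your claim that $X(E/K_\infty)$ is annihilated by an element coprime to $T_\cyc$ is itself unproven (a priori $T_\cyc$ could divide the characteristic ideal). The paper's mechanism for (2) is different and non-circular: cotorsionness of $\Sel_p(E/K_\infty)^\vee$ over $\Lambda(G_\infty)$ (deduced from the $K_\ac$-level as in \cite[Lemma 2.6]{HO10}) gives $H^2(K_S/K_\infty,E_{p^\infty})=0$ and surjectivity of $\lambda_\infty$ by \cite[Theorem 7.2]{HV03}; then $H^1(H_\ac,H^1(K_S/K_\infty,E_{p^\infty}))=0$ as in \cite[Lemma 2.4]{CSS03}, which identifies $H^1(H_\ac,\Sel_p(E/K_\infty))$ with $\coker(\lambda_\infty^{H_\ac})$, and the latter vanishes by the surjectivity established in part (1)'s diagram. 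You should restructure part (2) along these lines rather than through pseudo-nullity.
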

	
	\begin{proof}
		Consider the following commutative diagram:
		\begin{center}
			\begin{tikzcd}[cramped, column sep=small]
				{0} \arrow[r] & {\Sel_p(E/K_\infty)^{H_\ac}} \arrow[r] & {H^1(K_S/K_\infty, E_{p^\infty})^{H_\ac}} \arrow[r, "\lambda_\infty^{H_\ac}"]           & {\bigoplus_{v \in S} J_v^1(E/K_\infty)^{H_\ac}}\\ 
				
				{0} \arrow[r] & {\Sel_p(E/K_\ac)} \arrow[r] \arrow[u, "\alpha"] & {H^1(K_S/K_\ac, E_{p^\infty})} \arrow[r, "\lambda_\ac"] \arrow[u, "\beta"] & \bigoplus_{v \in S} J_v^1(E/K_\ac) \arrow[u, "\gamma"] \arrow[r] & 0. 
			\end{tikzcd}
		\end{center}
		
		The map $\lambda_\ac$ in this diagram is surjective because $\Sel_p(E/K_\ac)$ is $\Lambda(\Gamma_\ac)$-cotorsion \cite[Theorem 7.2]{HV03}. We show that $\beta$ is an isomorphism and $\gamma: {\bigoplus_{v \in S} J^1_v(E/K_\ac)} \rightarrow \bigoplus_{v \in S} J_v^1(E/K_\infty)^{H_\ac}$ is injective. It will then follow from the snake lemma that $\alpha$ is an isomorphism. 
		
		First, $\ker(\beta) = H^1(K_\infty/K_\ac, E_{p^\infty}(K_\infty))$ by the Hochschild-Serre spectral sequence. It follows from \ref{surj} in Hypothesis \ref{hypo:def} that $E_{p^\infty}(K_\infty) = 0$ and thus $H^1(H_\ac, E_{p^\infty}(K_\infty)) = 0$. 
		
		Now, let us compute $\ker(\gamma) = \bigoplus_{v \in S} \gamma_v$. If $v \nmid p$, then for any primes $\eta$ in $K_\ac$ and $w$ in $K_\infty$ lying above $p$, the restriction map
		\[H^1(K_{\ac, \eta}, E_{p^\infty}) \rightarrow H^1(K_{\infty, w}, E_{p^\infty})\]
		is the identity map because both $K_{\ac, \eta}$ and $K_{\infty, w}$ are the unique pro-$p$ unramified extension of $K_v$.
		We consider the case $v \mid p$. Note that every prime $\eta \mid v$ in $K_\ac$  is ramified over $K$. Using Proposition 4.8 of \cite{CG96}, we have
		\[\frac{H^1(K_{\infty, w}, E_{p^\infty})}{\Ima(\kappa_{\infty, w})} = H^1(K_{\infty, w}, (\widetilde{E}_v)_{p^\infty}),\] \[\frac{H^1(K_{\ac, \eta}, E_{p^\infty})}{\Ima(\kappa_{\ac, \eta})} = H^1(K_{\ac, \eta}, (\widetilde{E}_v)_{p^\infty})\]
		for every prime $\eta$ in $K_\ac$ above $p$ and every prime $w$ in $K_\infty$ above $\eta$. For the second identity, we used the assumption that $v$ is ramified in $K_\ac$. The restriction map is simply 
		\[\gamma_w: H^1(K_{\ac, \eta}, (\widetilde{E}_w)_{p^\infty}) \rightarrow H^1(K_{\infty, w}, (\widetilde{E}_w)_{p^\infty}),\]
		whose kernel is $H^1(K_{\infty, w}/K_{\ac, \eta}, (\widetilde{E}_v)_{p^\infty}(k_{\infty, w}))$ where $k_{\infty, w}$ is the residual field of $K_{\infty, w}$. Since $(\widetilde{E}_v)_{p^\infty}(k_v) = 0$, we have \[H^1(K_{\infty, w}/K_{\ac, \eta}, (\widetilde{E}_v)_{p^\infty}(k_{\infty, w})) = 0\]  given our assumption and that $k_{\infty, w}/k_v$ is a cyclic pro-$p$ extension.

		In fact, $\gamma$ is an isomorphism because \[\coker(\gamma_v) = H^2(K_{\infty, w}/ K_{\ac, \eta}, (\widetilde{E}_v)_{p^\infty}(k_{\infty, w})) = 0.\] As a result, $\lambda_\infty^{H_\ac}$ is surjective because $\lambda_\ac$ is surjective and both $\beta, \gamma$ are isomorphisms.
		
		Now, we show that $H^1(H_\ac, \Sel_p(E/K_\infty)) = 0.$ 
		%Note that $\Sel_p(E/K_\cyc)$ is $\Lambda(\Gamma_\ac)$-cotorsion by Kato's work \cite{Kat04} and the fact that $K/\Q$ is abelian. 
		Using the same argument as Lemma 2.6 of \cite{HO10}, the fact that $\Sel_p(E/K_\ac)$ is $\Lambda(\Gamma_\ac)$-cotorsion implies $\Sel_p(E/K_\infty)$ is $\Lambda(G_\infty)$-cotorsion.
		
		By \cite[Theorem 7.2]{HV03}, we have $H^2(K_S/K_\infty, E_{p^\infty}) = 0$ and exactness of the sequence 
		
		\begin{equation} \label{seq:K_infty}
			{0} \rightarrow {\Sel_p(E/K_\infty)} \rightarrow {H^1(K_S/K_\infty, E_{p^\infty})} \rightarrow {\bigoplus_{v \in S} J_v^1(E/K_\infty)} \rightarrow 0.
		\end{equation} 
		
		By the same argument as \cite[Lemma 2.4]{CSS03}, $H^2(K_S/K_\infty, E_{p^\infty}) = 0$ implies \[H^1(H_\ac, H^1(K_S/K_\infty, E_{p^\infty})) = 0.\] Taking $H_\ac$-cohomology of short exact sequence \eqref{seq:K_infty} gives the identity \[\coker(\lambda_\infty^{H_\ac}) = H^1(H_\ac, \Sel_p(E/K_\infty)).\]
		This group is trivial because it has been shown that $\lambda_\infty^{H_\ac}$ is surjective.
	\end{proof}

	\begin{thm}
		Assume the same hypotheses as Theorem \ref{thm:coinv}. Then the module $X(E/K_\infty)$ does not have any non-trivial pseudo-null $\Lambda(G_\infty)$-submodules.
	\end{thm}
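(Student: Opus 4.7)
The strategy is to apply Proposition \ref{prop:Ven}: it suffices to show $\pd_{\Lambda(G_\infty)} X(E/K_\infty) \leq 1$. Since $G_\infty \simeq \Z_p^2$, the Iwasawa algebra $\Lambda(G_\infty)$ is a regular local ring of Krull dimension $3$, and by the Auslander-Buchsbaum formula this reduces to exhibiting an $X(E/K_\infty)$-regular sequence of length $2$ in the maximal ideal of $\Lambda(G_\infty)$.

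Write $X := X(E/K_\infty)$, and assume $X \neq 0$ (otherwise the statement is trivial). Let $\sigma$ be a topological generator of $H_\ac$ and let $\tau \in G_\infty$ be a lift of a topological generator of $\Gamma_\ac$; set $T_1 = \sigma - 1$ and $T_2 = \tau - 1$, both of which lie in the maximal ideal of $\Lambda(G_\infty)$. I will show $(T_1, T_2)$ is an $X$-regular sequence.

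For the first step, Theorem \ref{thm:coinv}(2) gives $H^1(H_\ac, \Sel_p(E/K_\infty)) = 0$. Since $H_\ac \simeq \Z_p$, this group is the coinvariants $\Sel_p(E/K_\infty)/(\sigma - 1)\Sel_p(E/K_\infty)$, whose Pontryagin dual is $X[T_1]$. Hence $X[T_1] = 0$, so $T_1$ is a non-zero divisor on $X$. Dualizing the isomorphism $\Sel_p(E/K_\ac) \xrightarrow{\sim} \Sel_p(E/K_\infty)^{H_\ac}$ of Theorem \ref{thm:coinv}(1) gives a $\Lambda(\Gamma_\ac)$-isomorphism $X/T_1 X \simeq X(E/K_\ac)$.

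For the second step, the surjectivity of $\overline{\rho}_{E,p}$ in Hypothesis \ref{hypo:def} forces $E(K)(p) = 0$, so all assumptions of Theorem \ref{thm:tor} are satisfied. That theorem yields $H^1(\Gamma_\ac, \Sel_p(E/K_\ac)) = 0$, which by the same duality dualizes to $X(E/K_\ac)^{\Gamma_\ac} = 0$. Since the action of $T_2$ on $X/T_1 X = X(E/K_\ac)$ agrees with that of a topological generator of $\Gamma_\ac$ minus $1$, this means $T_2$ is a non-zero divisor on $X/T_1 X$. Finally, topological Nakayama ensures $X/(T_1, T_2)X \neq 0$ because $X \neq 0$, confirming that $(T_1, T_2)$ is a genuine regular sequence. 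The main bookkeeping step is the Pontryagin duality translation between vanishing of Galois cohomology of the discrete Selmer groups and annihilator statements on the compact module $X$; everything else is a direct assembly of results already established in the paper.
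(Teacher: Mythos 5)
Your argument is correct and is essentially the paper's own proof: both reduce the statement to $\pd_{\Lambda(G_\infty)}X(E/K_\infty)\leq 1$ via Proposition \ref{prop:Ven} and the Auslander--Buchsbaum formula, and then verify that $(\gamma_\cyc-1,\,\gamma_\ac-1)$ is an $X(E/K_\infty)$-regular sequence by dualizing the cohomology vanishing statements and invoking Nakayama's lemma, with the identification $X(E/K_\infty)/T_\cyc X(E/K_\infty)\simeq X(E/K_\ac)$ from Theorem \ref{thm:coinv}. The only (cosmetic) difference is that you source the first vanishing $H^1(H_\ac,\Sel_p(E/K_\infty))=0$ from Theorem \ref{thm:coinv}(2) and spell out why the hypotheses of Theorem \ref{thm:tor} hold (namely that \ref{surj} forces $E(K)(p)=0$), whereas the paper cites Theorem \ref{thm:tor} for the first step without these details.
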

	
	\begin{proof}
		We  will show $\pd_{\Lambda(G_\infty)}(X(E/K_\infty)) \leq 1$, which implies that $X(E/K_\infty)$ does not have any non-trivial pseudo-null $\Lambda(G_\infty)$-submodules by Proposition \ref{prop:Ven}. By the Auslander-Buchsbaum formula \cite[Theorem 19.9]{Eis95}, the condition $\pd_{\Lambda(G_\infty)}(X(E/K_\infty)) \leq 1$ is equivalent to $\depth(X(E/K_\ac)) \geq 2.$ We will compute $\depth(X(E/K_\ac))$ using regular sequences.
		
		Let $\gamma_\cyc$ and $\gamma_\ac$ be topological generators of $\Gamma_\cyc$ and  $\Gamma_\ac$, respectively. One may write $G_\infty = \Gamma_\ac \times H_\ac$ where $H_\ac \simeq \Gamma_\cyc$, so that $\gamma_\cyc$ can be regarded as a topological generator of $H_\ac$.  Denote $T_\cyc := \gamma_\cyc - 1$ and $T_\ac := \gamma_\ac - 1$. We show that $\{T_\cyc, T_\ac\}$ is an $X(E/K_\infty)$-regular sequence in the maximal ideal $\frakm$ of $\Lambda(G_\infty)$
		
		By Theorem \ref{thm:tor}, $X(E/K_\infty)[T_\cyc] = H_1(H_\ac, X(E/K_\infty)) = 0$.
		Therefore, $T_\cyc$ is a non-zero divisor on $X(E/K_\infty)$. Moreover, $X(E/K_\infty)/T_\cyc X(E/K_\infty)$ is non-zero because otherwise $X(E/K_\infty)$ would be trivial by Nakayama's lemma. 
		
		On the other hand, the isomorphism $X(E/K_\infty)/T_\cyc X(E/K_\infty) \simeq X(E/K_\ac)$ (Theorem \ref{thm:coinv}) implies $T_\ac$ is a non-zero divisor on $X(E/K_\infty)/T_\cyc X(E/K_\infty)$, as we have seen that $X(E/K_\ac)[T_\ac] = H_1(\Gamma_\ac, X(E/K_\ac)) = 0$ (Theorem \ref{thm:tor}). Once again, Nakayama's lemma implies \[X(E/K_\infty)/(T_\cyc, T_\ac) X(E/K_\infty)\] 
		is non-trivial. 
	\end{proof}
	
	\subsection{The indefinite setting} \label{sec:indef}
	
	Throughout this section, we consider the hypotheses below, which are part of Assumptions A and B in the work of Bertolini \cite{Ber01}:
	
	\begin{hypo} \label{hypo:indef}
		Assume that the following hold:
		\begin{itemize}[align=left]
			\item[\mylabel{good}{(good)}] $p \nmid 6 N \disc(K) \#\Pic(\calO_K).$
			\item[\mylabel{ord}{(ord)}] $E$ has good ordinary reduction at $p$.
			\item[\mylabel{Heeg}{(Heeg)}] The square-free part of $N^-$ is a product of an even number of primes.
		\end{itemize}	
	\end{hypo}
	%In \cite{Ber01}, Bertolini worked with the weak Heegner hypothesis, which only requires $N^{-}$ to be the product of an even number primes. For simplicity, we will work with the strong Heegner hypothesis \ref{Heeg}. 
	Hypothesis \ref{Heeg} is called the weak Heegner hypothesis. In this setting, there is an explicitly constructed Heegner point $y_K \in E(K)$. In the simplest case when $N^{-} = 1$ (the strong Heegner hypothesis), there is an ideal $\frakN \subset \calO_K$ such that $N \calO_K = \frakN \overline{\frakN}$ and $y_K$ is the image of $[(\C/\calO_K, \frakN^{-1}/\calO_K)]$ under the modular parametrization $\varphi : X_0(N) \rightarrow E$ sending $i\infty$ to the origin of $E$. We will assume that
	\begin{itemize}
		\item[\mylabel{Heeg-pt}{(inf)}] $y_K$ has infinite order in $E(K)$.
	\end{itemize}
	
	Note that assumption \ref{Heeg-pt} makes the hypotheses of our results numerically verifiable, in contrast with the hypotheses for similar results in the literature. See Theorem \ref{cor:main} and the following examples (Example \ref{ex:indef}, Example \ref{ex:indef-2}). 
	
	\begin{rmk} \label{rmk:Kol}
		The works of Kolyvagin \cite{Kol90}, Kolyvagin-Logachev \cite{KL89} and \\ Howard \cite{How04b} imply the following:
		\begin{itemize}[align=left]
			\item[\mylabel{M-W}{(M-W)}] $E/K$ has Mordell-Weil rank equal to $1$,
			\item[\mylabel{Sha}{(Sha)}] $\Sha(E/K)_{p^\infty}$ is finite.
		\end{itemize}
	\end{rmk}

	Under the hypothesis $N^{-} = 1$, Perrin-Riou  constructed the free $\Lambda(\Gamma_\ac)$-module of Heegner points $\mathcal{H}_\ac \subset \calS_p(E/K_\ac)$ \cite{P-R87}. This construction was later generalized by Howard under the weak Heegner hypothesis \cite{How04b}. It follows from the norm relations on Heegner points that $\calH_\ac$ is $\Lambda(\Gamma_\ac)$-free of rank $1$ with our assumption on $y_K$ (see for example, \cite[Theorem 3.4.3]{How04b}). Even without this assumption, Cornut and Vatsal \cite{Cor02, Vat03} showed that $\mathcal{H}_\ac$ is always non-trivial. Moreover, Howard showed that both $\widehat{\Sel_p(E/K_\ac)}$ and $\calS_p(E/K_\ac)$ have $\Lambda(\Gamma_\ac)$-rank $1$ assuming \ref{Heeg-pt} \cite{How04, How04b}. The latter is a free $\Lambda(\Gamma_\ac)$-module because there is an injection $\calS_p(E/K_\ac) \hookrightarrow \Hom_{\Lambda}(X(E/K_\ac), \Lambda)$ \cite[Lemme 5]{P-R87}.	
	
	The non-triviality of Heegner points also implies the weak Leopoldt conjecture, i.e. the vanishing of $H^2(K_S/K_\ac, E_{p^\infty})$ under certain hypotheses \cite[Theorem 5.4]{Ber01}. We begin with a simpler proof of this fact assuming \ref{Heeg-pt}.
	
	\begin{lemma} \label{lem:trivial} Under our assumptions as in Hypothesis \ref{hypo:indef}, we have the following:
		\begin{enumerate}
			\item $H^2(K_S/K, E_{p^\infty}) = 0$.
			%\item $H^2(K_S/K_\ac, E_{p^\infty}) = 0$, i.e. the weak Leopoldt conjecture holds for $E_{p^\infty}$ over $K_\ac$.
			\item $H^1(\Gamma_\ac, H^1(K_S/K_\ac, E_{p^\infty})) = 0$.
			\item $H^2(K_S/K_\ac, E_{p^\infty}) = 0$.
		\end{enumerate}
	\end{lemma}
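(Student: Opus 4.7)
The plan is to prove $(1)$ directly from the Cassels--Poitou--Tate exact sequence using the Heegner point, and then to deduce $(2)$ and $(3)$ from $(1)$ by means of the Hochschild--Serre spectral sequence together with Nakayama's lemma.

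For $(1)$, I invoke the Cassels--Poitou--Tate exact sequence
\[
0 \to \Sel_p(E/K) \to H^1(K_S/K, E_{p^\infty}) \to \bigoplus_{v \in S} J_v(E/K) \to \calS_p(E/K)^{\vee} \to H^2(K_S/K, E_{p^\infty}) \to 0.
\]
This identifies $H^2(K_S/K, E_{p^\infty})$ with the Pontryagin dual of the kernel of the compact localization $\calS_p(E/K) \to \bigoplus_{v \in S} J_v^{\vee}$, where $J_v^{\vee} \cong E(K_v)^{\wedge}$ by local Tate duality. By Remark \ref{rmk:Kol}, \ref{M-W} and \ref{Sha} imply $\calS_p(E/K) \cong E(K) \otimes_{\Z} \Z_p$ has $\Z_p$-rank one, and using \ref{good} to control $E(K)[p]$ it is in fact free of rank one. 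A generator is, up to a scalar, the Kummer image of the Heegner point $y_K$, which by \ref{Heeg-pt} has infinite order in $E(K)$ and hence also in $E(K_v)$ via the injection $E(K) \hookrightarrow E(K_v)$. Since $E(K_v)_{\mathrm{tors}}$ is finite, its image in $E(K_v)^{\wedge}$ is non-torsion at some $v \mid p$, so the compact localization is injective and $H^2(K_S/K, E_{p^\infty}) = 0$.

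For $(2)$ and $(3)$, the Hochschild--Serre spectral sequence
\[
E_2^{p,q} = H^p(\Gamma_\ac, H^q(K_S/K_\ac, E_{p^\infty})) \Longrightarrow H^{p+q}(K_S/K, E_{p^\infty})
\]
collapses on rows $p \geq 2$ since $\Gamma_\ac \simeq \Z_p$ has $p$-cohomological dimension one, and its total-degree-two filtration yields
\[
0 \to H^1(\Gamma_\ac, H^1(K_S/K_\ac, E_{p^\infty})) \to H^2(K_S/K, E_{p^\infty}) \to H^2(K_S/K_\ac, E_{p^\infty})^{\Gamma_\ac} \to 0.
\]
Applying $(1)$ to the middle term forces both the submodule and the quotient to vanish; the submodule is precisely $(2)$, while the quotient gives $H^2(K_S/K_\ac, E_{p^\infty})^{\Gamma_\ac} = 0$. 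Dualizing produces $\bigl( H^2(K_S/K_\ac, E_{p^\infty})^{\vee} \bigr)_{\Gamma_\ac} = 0$; since this dual is a finitely generated $\Lambda(\Gamma_\ac)$-module by standard finite-generation results in Iwasawa theory, Nakayama's lemma yields $(3)$.

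The technical heart is step $(1)$. The subtlety lies in ensuring that the compact Selmer group contains no stray $p$-torsion that could obstruct injectivity of the localization, and in ruling out the possibility that the image of $y_K$ in $E(K_v)^{\wedge}$ happens to land in its finite torsion subgroup. The first issue is handled through \ref{good}, the second through the finiteness of $E(K_v)_{\mathrm{tors}}$ combined with the non-triviality of $y_K$ from \ref{Heeg-pt}; once these are in place, the vanishing of $H^2$ at the base field propagates up the tower without further work.
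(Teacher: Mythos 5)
Your strategy is, in substance, the paper's own: parts (2) and (3) are deduced from (1) exactly as you do, via the Hochschild--Serre spectral sequence for $\Gamma_\ac$ (using that $\Gamma_\ac$ has $p$-cohomological dimension $1$), the only extra point being that $H^2(K_S/K_\ac,E_{p^\infty})^{\Gamma_\ac}=0$ forces $H^2(K_S/K_\ac,E_{p^\infty})=0$ --- your Nakayama argument on the finitely generated $\Lambda(\Gamma_\ac)$-module $H^2(K_S/K_\ac,E_{p^\infty})^\vee$ is fine, and alternatively a nonzero discrete $p$-primary $\Gamma_\ac$-module always has nonzero invariants. For (1) the paper simply invokes the proof of Theorem 12 of Coates--McConnell \cite{CM94}, and what you write is essentially an unpacking of that citation: since $H^2(K_v,E_{p^\infty})=0$ for all $v\in S$ when $p$ is odd (a fact your truncated Cassels--Poitou--Tate sequence silently uses), $H^2(K_S/K,E_{p^\infty})$ is dual to the kernel of the localization $\calS_p(E/K)\to\bigoplus_{v\in S}E(K_v)^{\wedge}$, which one kills using a point of infinite order together with \ref{M-W} and \ref{Sha} from Remark \ref{rmk:Kol}. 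So this is the same route, written out rather than cited.

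There is, however, one incorrect step: the claim that \ref{good} ``controls $E(K)[p]$'' so that $\calS_p(E/K)\cong E(K)\otimes\Z_p$ is free of rank one. Hypothesis \ref{good} imposes nothing on $E(K)[p]$, and the paper's own Example \ref{ex:indef} (the curve $11a1$, $K=\Q(\sqrt{-19})$, $p=5$) satisfies Hypothesis \ref{hypo:indef} and \ref{Heeg-pt} while $E(K)[5]\neq 0$; there $\calS_p(E/K)$ has nontrivial $p$-torsion, and correspondingly ``a generator is, up to a scalar, the Kummer image of $y_K$'' fails. Fortunately only a small patch is needed, not a new idea: write $\calS_p(E/K)\cong\Z_p\bar g\oplus E(K)(p)$ and fix a prime $v\mid p$. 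The completion map $E(K_v)\to E(K_v)^{\wedge}$ kills only prime-to-$p$ torsion, so $E(K)(p)$ injects into $E(K_v)^{\wedge}$; moreover $y_K\otimes 1=c\,\bar g+t$ with $c\neq 0$ and $t$ torsion, and the image of $y_K$ in $E(K_v)^{\wedge}$ is non-torsion, hence so is the image of $\bar g$. An element $a\bar g+t'$ of the kernel with $a\neq 0$ would make some multiple of the image of $\bar g$ torsion, a contradiction, while a torsion element of the kernel vanishes by injectivity on $E(K)(p)$. With this repair your proof of (1), and hence of (2) and (3), is correct.
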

	
	\begin{proof}
		Recall that  $E(K)$ has $\Z$-rank $1$ and $\Sha(E/K)(p)$ is finite (see Remark \ref{rmk:Kol}). The same proof as Theorem 12 of \cite{CM94} allows us to conclude that $H^2(K_S/K, E_{p^\infty}) = 0$, which is the strong Leopoldt conjecture. 
		
		%		For part (ii), consider the exact sequence \ref{eqn:Sel}.
		%		
		%		For any prime $v \nmid p$, it follows from \cite[Proposition 2]{Gre89} that $\corank_{\Lambda(\Gamma_\ac)} J_v^1(E/K_\ac) = 0$. On the other hand, for a prime $v \mid p$, we have $\corank_{\Lambda(\Gamma_\ac)} J_v^1(E/K_\ac) = [K_v: \Q_p]$ by \cite[Theorem 4.9]{CG96}. In total, we have $\corank_{\Lambda(\Gamma_\ac)} \bigoplus_{v \in S} J_v^1(E/K_\ac) = \sum_{v \mid p} [K_v: \Q_p] = [K:\Q] = 2.$ 
		%		
		%		It is known that both $\widehat{\Sel_p(E/K_\ac)}$ and $\calS_p(E/K_\ac)$ are free of rank $1$ over $\Lambda(\Gamma_\ac)$ \cite{How04}. The exact sequence \ref{eqn:Sel} thus implies that $\corank_{\Lambda(\Gamma_\ac)} H^1(K_S/K_\ac, E_{p^\infty}) = 2$. Proposition 3 of \cite{Gre89} gives the inequality
		%		\[\corank_{\Lambda(\Gamma_\ac)} H^1(K_S/K_\ac, E_{p^\infty}) - \corank_{\Lambda(\Gamma_\ac)} H^2(K_S/K_\ac, E_{p^\infty}) \geq 2,
		%		\]
		%		which means $\corank_{\Lambda(\Gamma_\ac)} H^2(K_S/K_\ac, E_{p^\infty}) = 0$. The fact that $H^2(K_S/K_\ac, E_{p^\infty})$ is also $\Lambda(\Gamma_\ac)$-cofree \cite[Proposition 4]{Gre89} allows us to conclude that $H^2(K_S/K_\ac, E_{p^\infty}) = 0$. 
		
		We look at the Hochschild-Serre spectral sequence
		\[H^i(\Gamma_\ac, H^j(K_S/K_\ac, E_{p^\infty})) \Rightarrow H^{i + j}(K_S/K, E_{p^\infty}).\]		
		The fact that $H^2(K_S/K, E_{p^\infty}) = 0$ and $H^2(\Gamma_\ac, H^0(K_S/K_\ac, E_{p^\infty})) = 0$ ($\Gamma_\ac$ has $p$-cohomological dimension $1$) implies both part (ii) and part (iii) of the Lemma. 
	\end{proof}

	Throughout the rest of this section, we will  analyze the fundamental diagram
	\begin{equation} \label{diag:fundamental}
		\begin{tikzcd}[column sep = small, cramped]
			{0} \arrow[r] & {\Sel_p(E/K_\ac)^{\Gamma_\ac}} \arrow[r] & {H^1(K_S/K_\ac, E_{p^\infty})^{\Gamma_\ac}} \arrow[r, "\lambda_\ac^{\Gamma_\ac}"]           & {\bigoplus_{v \in S} J_v^1(E/K_\ac)^{\Gamma_\ac}} \\
			{0} \arrow[r] & {\Sel_p(E/K)} \arrow[r] \arrow[u, "\alpha"] & {H^1(K_S/K, E_{p^\infty})} \arrow[r, "\lambda"] \arrow[u, "\beta"] & {\bigoplus_{v \in S} H^1(K_v, E)(p)} \arrow[u, "\gamma"].
		\end{tikzcd}
	\end{equation}
	
	One may split \eqref{diag:fundamental} as follows:
	\begin{equation} \label{diag:fundamental2}
		\begin{tikzcd}[column sep = small, cramped]
			{0} \arrow[r] & {\Sel_p(E/K_\ac)^{\Gamma_\ac}} \arrow[r] & {H^1(K_S/K_\ac, E_{p^\infty})^{\Gamma_\ac}} \arrow[r, "\phi_\ac"]           & {\Ima(\lambda_\ac^{\Gamma_\ac})} \arrow[r] & 0\\
			{0} \arrow[r] & {\Sel_p(E/K)} \arrow[r] \arrow[u, "\alpha"] & {H^1(K_S/K, E_{p^\infty})} \arrow[r, "\lambda"] \arrow[u, "\beta"] & {\Ima(\lambda)} \arrow[u, "\delta"] \arrow[r] & 0,
		\end{tikzcd}
	\end{equation}
	
	\begin{equation} \label{diag:fundamental3}
		\begin{tikzcd}[cramped]
			{0} \arrow[r] & {\Ima(\lambda_\ac^{\Gamma_\ac})} \arrow[r] & {\bigoplus_{v \in S} J_v^1(E/K_\ac)^{\Gamma_\ac}} \arrow[r]           & {\coker(\lambda_\ac^{\Gamma_\ac})} \arrow[r] & 0\\
			{0} \arrow[r] & {\Ima(\lambda)} \arrow[r] \arrow[u, "\delta"] & {{\bigoplus_{v \in S} H^1(K_v, E)}(p)} \arrow[r] \arrow[u, "\gamma"] & {\coker(\lambda)} \arrow[u, "\epsilon"] \arrow[r] & 0.
		\end{tikzcd}
	\end{equation}
	Let $\phi_\ac$ denote the map ${H^1(K_S/K_\ac, E_{p^\infty})^{\Gamma_\ac}} \rightarrow {\Ima(\lambda_\ac^{\Gamma_\ac})}$ in the top row of \eqref{diag:fundamental2}, thereby distinguishing it from the map $\lambda_\ac^{\Gamma_\ac}$ in \eqref{diag:fundamental}. For each prime $v$ in $K$ dividing $N$, let $c_v(E/K)$ be the local Tamagawa factor of $E/K$ at $v$. Moreover, for each prime $v\mid p$, we let $d_v = \# \widetilde{E}_v(k_v)$, where $\widetilde{E}_v$ is the reduction of $E/K$ at $v$ and $k_v$ is the residual field of $K$ with respect to $v$.

	\begin{thm} \label{thm:gamma}
		The restriction map \[\gamma: \bigoplus_{v \in S} H^1(K_v, E) \rightarrow \bigoplus_{v \in S} J_v^1(E/K_\ac)^{\Gamma_\ac}\] is surjective with finite kernel of order $\prod_{v \mid N^{+}} c_v(E/K)^{(p)} \times \prod_{v \mid p} (d_v^{(p)})^2$.
	\end{thm}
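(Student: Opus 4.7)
The plan is to decompose $\gamma$ as a direct sum of local restriction maps $\gamma_v \colon H^1(K_v, E)(p) \to J_v^1(E/K_\ac)^{\Gamma_\ac}$ and analyze each using inflation--restriction. For each $v \in S$, fix a prime $w$ of $K_\ac$ above $v$ and write $G_{v,\ac} = \Gal(K_{\ac, w}/K_v)$. Shapiro's lemma identifies $J_v^1(E/K_\ac)^{\Gamma_\ac}$ with $H^1(K_{\ac, w}, E)(p)^{G_{v, \ac}}$, and the inflation--restriction sequence
\[0 \to H^1(G_{v, \ac}, E(K_{\ac, w}))(p) \to H^1(K_v, E)(p) \xrightarrow{\gamma_v} H^1(K_{\ac, w}, E)(p)^{G_{v, \ac}} \to H^2(G_{v, \ac}, E(K_{\ac, w}))(p)\]
identifies $\ker \gamma_v$ and bounds $\coker \gamma_v$. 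Since $G_{v, \ac}$ is a closed subgroup of $\Gamma_\ac \cong \Z_p$, it has $p$-cohomological dimension at most $1$, so the final term vanishes. Thus each $\gamma_v$ is surjective (whence $\gamma$ is surjective), and the task reduces to computing $|H^1(G_{v, \ac}, E(K_{\ac, w}))(p)|$ case by case.

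The decomposition group is identified as follows. For $v \mid N^-$, the unique prime $v$ over an inert rational prime is fixed by complex conjugation, and the defining property $\chi(c \sigma c^{-1}) = \chi(\sigma)^{-1}$ of the anticyclotomic character forces $\chi(\Fr_v)^2 = 1$; since $\Gamma_\ac \cong \Z_p$ is torsion-free for odd $p$, the prime $v$ splits completely in $K_\ac$, $G_{v, \ac}$ is trivial, and there is no contribution to $\ker \gamma$. For $v \mid N^+$, the prime $v \nmid p$ and $G_{v, \ac} \cong \Z_p$, with $K_{\ac, w}/K_v$ the unique pro-$p$ unramified extension. For $v \mid p$, the anticyclotomic extension is ramified at $v$, so again $G_{v, \ac} \cong \Z_p$.

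For $v \mid N^+$, the formal-group part of $E(K_{\ac, w})$ is pro-$\ell$ with $\ell \neq p$, so the $p$-primary cohomology is governed by the N\'eron component group. Combining $0 \to E^0(K_{\ac, w}) \to E(K_{\ac, w}) \to \Phi_v \to 0$ with the Kummer sequence and the standard cocycle computation on the procyclic group $G_{v, \ac}$ yields $|H^1(G_{v, \ac}, E(K_{\ac, w}))(p)| = c_v(E/K)^{(p)}$. For $v \mid p$, good ordinary reduction gives a $G_{v, \ac}$-equivariant exact sequence
\[0 \to \widehat{E}_v(\frakm_{K_{\ac, w}}) \to E(K_{\ac, w}) \to \widetilde{E}_v(k_{\ac, w}) \to 0,\]
and a Coates--Greenberg analysis in the spirit of Proposition~4.8 of \cite{CG96} (already invoked in Theorem~\ref{thm:coinv}) identifies the $p$-primary cohomology of each piece. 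The formal-group contribution and the reduced-curve contribution each produce a factor of $d_v^{(p)}$: one from the Euler--Poincar\'e characteristic of the formal group's Tate module under $G_{v, \ac}$, and one from $\#\widetilde{E}_v(k_v)(p) = d_v^{(p)}$ via the ramified cyclotomic-type descent on $\widetilde{E}_v(k_{\ac, w})(p)$. Multiplying gives $(d_v^{(p)})^2$, and assembling across all $v \in S$ yields the claimed kernel order.

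The main technical obstacle is the $v \mid p$ computation: one must simultaneously control the formal-group and reduction contributions under the action of the ramified procyclic $\Z_p$-extension and verify that each produces exactly one factor of $d_v^{(p)}$, with no cross-term corrections arising from the extension class of the ordinary filtration.
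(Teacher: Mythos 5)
Your proposal follows essentially the same approach as the paper's proof: decompose $\gamma$ into local components $\gamma_v$, determine the decomposition group at $v$ from its splitting behaviour in $K_\ac/K$ (inert primes split completely and contribute nothing; split primes away from $p$ give an unramified $\Z_p$-extension; primes above $p$ are ramified), and then carry out a CS00-style local cohomology computation for each type. The paper delegates the split and $v\mid p$ local computations directly to \cite[Lemma 3.4]{CS00} and \cite[Proposition 3.5]{CS00}, whereas you sketch the underlying inflation--restriction and Coates--Greenberg arguments from first principles, leaving the delicate $v\mid p$ count of $(d_v^{(p)})^2$ as an acknowledged gap where the paper instead has a precise reference.
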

	
	\begin{proof}
		For each prime $v$ in $K$, denote by \[\gamma_v: H^1(K_v, E) \rightarrow \bigoplus_{v \in S} J_v^1(E/K_\ac)^{\Gamma_\ac}\] the local component of $\gamma$ at $v$.

		We first consider a prime $v$ not lying above $p$. If the rational prime below $v$ is inert in $K$, then $v$ is infinitely split in $K_\ac/K$ and $\gamma_v$ is simply the identity map. Otherwise, $v$ is finitely decomposed in $K_\ac/K$ and the same argument as \cite[Lemma 3.4]{CS00} shows that $\gamma_v$ is surjective and $\ker(\gamma_v)$ is finite of order $c_v(E/K)^{(p)}$, where $c_v(E/K)^{(p)}$ is the largest $p$-power that divides $c_v(E/K)$. Note that $c_v(E/K)$ is trivial for primes $v \nmid N$.
		
		Next, consider a prime $v$ lying above $p$. Since every prime above $p$ is ramified in $K_\ac/K$, the same argument as \cite[Proposition 3.5]{CS00} shows that $\gamma_v$ is surjective and $\ker(\gamma_v)$ is finite of order $(d_v^{(p)})^2$. The conclusion now follows.
	\end{proof}
	
	\begin{cor} \label{cor:coinv}
		Both $\ker(\alpha)$ and $\coker(\alpha)$ are finite, and $\# \coker(\alpha)$ divides $\prod_{v \mid N^{+}} c_v(E/K)^{(p)} \times \prod_{v \mid p} (d_v^{(p)})^2$.
	\end{cor}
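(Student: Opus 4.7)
The plan is to apply the snake lemma to the split diagrams \eqref{diag:fundamental2} and \eqref{diag:fundamental3}, combining the computation of $\ker(\gamma)$ from Theorem \ref{thm:gamma} with a Hochschild--Serre analysis of the middle vertical map $\beta$.

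First, I would analyze $\beta$ via the inflation-restriction sequence
\[
0 \to H^1(\Gamma_\ac, E_{p^\infty}(K_\ac)) \to H^1(K_S/K, E_{p^\infty}) \xrightarrow{\beta} H^1(K_S/K_\ac, E_{p^\infty})^{\Gamma_\ac} \to H^2(\Gamma_\ac, E_{p^\infty}(K_\ac)).
\]
Since $\Gamma_\ac \simeq \Z_p$ has $p$-cohomological dimension one, the final term vanishes, so $\beta$ is surjective with $\ker(\beta) = H^1(\Gamma_\ac, E_{p^\infty}(K_\ac))$. In the generic case covered by Hypothesis \ref{hypo:indef}, the module $E_{p^\infty}(K_\ac)$ is finite---the abelian $\Z_p$-extension $K_\ac/K$ can capture only finitely many $p$-power torsion points since $E$ is not CM by $K$---and therefore $\ker(\beta)$ is finite.

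Next, the snake lemma applied to \eqref{diag:fundamental2}, combined with the vanishing $\coker(\beta) = 0$, produces the four-term exact sequence
\[
0 \to \ker(\alpha) \to \ker(\beta) \to \ker(\delta) \to \coker(\alpha) \to 0.
\]
This immediately shows that $\ker(\alpha)$ embeds into the finite group $\ker(\beta)$ (hence is finite), and that $\coker(\alpha)$ is a quotient of $\ker(\delta)$. Applying the snake lemma to \eqref{diag:fundamental3} yields an injection $\ker(\delta) \hookrightarrow \ker(\gamma)$, and Theorem \ref{thm:gamma} identifies the order of $\ker(\gamma)$ as $\prod_{v \mid N^+} c_v(E/K)^{(p)} \cdot \prod_{v \mid p} (d_v^{(p)})^2$. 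Chaining these two divisibilities gives the stated bound on $\#\coker(\alpha)$.

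The only step requiring any real input beyond formal diagram chases is the finiteness of $E_{p^\infty}(K_\ac)$; this is the main (if modest) obstacle. For non-CM curves it is a direct consequence of Serre's open image theorem, since $\Gamma_\ac$ is abelian while $\Gal(K(E_{p^\infty})/K)$ has open image in $\mathrm{GL}_2(\Z_p)$. If $E$ happens to have CM by an order in a quadratic field $K' \neq K$, the same conclusion holds because $K_\ac$ is linearly disjoint from $K'$ over $K$. Everything else follows from the snake lemma and Theorem \ref{thm:gamma}.
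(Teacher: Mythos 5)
Your proof is correct and follows essentially the same route as the paper: split the fundamental diagram into \eqref{diag:fundamental2} and \eqref{diag:fundamental3}, apply the snake lemma, identify $\ker(\beta)=H^1(\Gamma_\ac, E_{p^\infty}(K_\ac))$ and $\coker(\beta)=0$ via Hochschild--Serre and $\mathrm{cd}_p(\Gamma_\ac)=1$, and bound $\#\coker(\alpha)$ by $\#\ker(\delta)\mid\#\ker(\gamma)$ using Theorem \ref{thm:gamma}. The only divergence is the finiteness of $\ker(\beta)$: you deduce it from the finiteness of $E_{p^\infty}(K_\ac)$ via Serre's open image theorem (legitimate here, since the generic case is in force in this section), whereas the paper invokes the argument of \cite[Lemma 3.1]{Gre99}, which gives finiteness of $H^1(\Gamma_\ac, E_{p^\infty}(K_\ac))$ without needing $E_{p^\infty}(K_\ac)$ itself to be finite.
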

	
	\begin{proof}
		The proof follows directly from Theorem \ref{thm:gamma} by applying the snake lemma to \eqref{diag:fundamental} once we show that both $\ker(\beta)$ and $\coker(\beta)$ are finite. 
		
		By the Hochschild-Serre spectral sequence, $\ker(\beta) = H^1(\Gamma_\ac, E_{p^\infty}(K_\ac))$. This group is always finite, following the argument in \cite[Lemma 3.1]{Gre99}. Note that the argument does not assume the finiteness of $E_{p^\infty}(K_\ac)$. Moreover, $\coker(\beta) =  H^2(\Gamma_\ac, E_{p^\infty}(K_\ac)) = 0$ since $\Gamma_\ac \simeq \Z_p$ has $p$-cohomological dimension $1$.
	\end{proof}
	
	\begin{lemma} \label{lem:E(K)(p)=0} Suppose that $E_{p^\infty}(K) = 0$. Then:
		\begin{enumerate}
			\item The map $\beta$ is an isomorphism
			\item The map $\alpha$ is injective.
			\item There is an isomorphism $\ker(\delta) \simeq \coker(\alpha).$ 
		\end{enumerate}
	\end{lemma}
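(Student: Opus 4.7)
The crux of all three parts is to upgrade the hypothesis $E_{p^\infty}(K)=0$ to the stronger statement $E_{p^\infty}(K_\ac)=0$. To see this, I would argue as follows. Since $\Gamma_\ac \simeq \Z_p$ is pro-$p$, the extension $K_\ac/K$ is a pro-$p$ extension. If $E_{p^\infty}(K_\ac)\neq 0$, then $E_p(K_\ac)\neq 0$ as well. The Galois action of $\Gamma_\ac$ on the finite group $E_p(K_\ac) \subseteq E_p$ factors through a finite quotient of $\Gamma_\ac$, which is necessarily a $p$-group acting on a nontrivial $p$-group; a standard fixed-point argument then produces a nonzero element in $E_p(K_\ac)^{\Gamma_\ac}=E_p(K)$, contradicting the hypothesis.

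For part (i), I would invoke the Hochschild--Serre five-term exact sequence associated to $\Gamma_\ac = \Gal(K_\ac/K)$ acting on $H^\bullet(K_S/K_\ac, E_{p^\infty})$,
\[
0 \to H^1(\Gamma_\ac, E_{p^\infty}(K_\ac)) \to H^1(K_S/K, E_{p^\infty}) \xrightarrow{\beta} H^1(K_S/K_\ac, E_{p^\infty})^{\Gamma_\ac} \to H^2(\Gamma_\ac, E_{p^\infty}(K_\ac)).
\]
Since $E_{p^\infty}(K_\ac)=0$ by the previous step, $\ker(\beta)=0$; and since $\Gamma_\ac \simeq \Z_p$ has $p$-cohomological dimension $1$, the final term vanishes as well (as already noted in Corollary \ref{cor:coinv}), giving $\coker(\beta)=0$. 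Hence $\beta$ is an isomorphism.

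For parts (ii) and (iii), I would apply the snake lemma to the commutative diagram \eqref{diag:fundamental2}, whose two rows are by construction short exact sequences and whose vertical maps are $\alpha$, $\beta$, $\delta$. Since $\ker(\beta)=\coker(\beta)=0$ by part (i), the resulting six-term exact sequence
\[
0 \to \ker(\alpha) \to \ker(\beta) \to \ker(\delta) \to \coker(\alpha) \to \coker(\beta) \to \coker(\delta) \to 0
\]
collapses to $0 \to \ker(\alpha) \to 0 \to \ker(\delta) \to \coker(\alpha) \to 0$, immediately yielding the injectivity of $\alpha$ in (ii) and the isomorphism $\ker(\delta) \simeq \coker(\alpha)$ in (iii). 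The only genuine input is the pro-$p$ torsion argument in the first paragraph; the rest is a diagram chase, so I do not anticipate a substantive obstacle.
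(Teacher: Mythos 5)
Your proposal is correct and follows essentially the same route as the paper: the key point is that $E_{p^\infty}(K)=0$ forces $E_{p^\infty}(K_\ac)=0$ (the standard $p$-group fixed-point argument), whence $\ker(\beta)=H^1(\Gamma_\ac,E_{p^\infty}(K_\ac))=0$, and $\coker(\beta)=0$ was already recorded in Corollary \ref{cor:coinv}; parts (ii) and (iii) then drop out of the snake lemma applied to \eqref{diag:fundamental2}. Your write-up simply spells out the fixed-point step and the snake-lemma bookkeeping more explicitly than the paper does.
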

	
	\begin{proof}
		Part (i) directly follows from the fact that \[\ker(\beta) = H^1(\Gamma_\ac, E_{p^\infty}(K_\ac)) = 0\] under the assumption $E_{p^\infty}(K) = 0$. Part (ii) and  (iii) follow from part (i) by chasing \eqref{eqn:Sel-1}.
	\end{proof}
	
	The following is a natural corollary of Corollary \ref{cor:coinv} and Lemma \ref{lem:E(K)(p)=0}.
	\begin{cor} \label{cor:isom}
		When $E_{p^\infty}(K) = 0$ and $\prod_{v \mid N^{+}} c_v(E/K)^{(p)} \times \prod_{v \mid p} (d_v^{(p)})^2 = 1,$ the control morphism $\alpha: \Sel_p(E/K) \rightarrow \Sel_p(E/K_\ac)^{\Gamma_\ac}$ is an isomorphism.
	\end{cor}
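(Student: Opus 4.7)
The plan is to combine the two preceding results cleanly: Corollary \ref{cor:coinv} controls $\coker(\alpha)$ by a product of Tamagawa factors and local point counts, while Lemma \ref{lem:E(K)(p)=0} handles $\ker(\alpha)$ under the torsion assumption. Since the hypotheses of the statement are precisely what kills each side, the work reduces to reading off the consequences.

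First, I would argue injectivity. By hypothesis $E_{p^\infty}(K) = 0$, so Lemma \ref{lem:E(K)(p)=0}(ii) applies and immediately gives that $\alpha$ is injective. (Equivalently, the Hochschild--Serre term $\ker(\beta) = H^1(\Gamma_\ac, E_{p^\infty}(K_\ac))$ vanishes together with the assumption that $E_{p^\infty}(K) = 0$, and a diagram chase on \eqref{diag:fundamental} yields the injectivity of $\alpha$.)

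Next I would argue surjectivity. Corollary \ref{cor:coinv} shows that $\coker(\alpha)$ is finite and that its order divides $\prod_{v \mid N^{+}} c_v(E/K)^{(p)} \times \prod_{v \mid p} (d_v^{(p)})^2$. By hypothesis this product equals $1$, so $\coker(\alpha) = 0$ and $\alpha$ is surjective. Combining with the previous paragraph gives that $\alpha$ is an isomorphism.

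There is no real obstacle here: the heavy lifting was done in Theorem \ref{thm:gamma} (analysis of the local restriction maps $\gamma_v$ at inert, split, and ramified primes), in Corollary \ref{cor:coinv} (the snake lemma argument on \eqref{diag:fundamental}), and in Lemma \ref{lem:E(K)(p)=0} (vanishing of $\ker(\beta)$). The corollary is simply the specialization of these results to the case where both obstructions vanish simultaneously, so the proof will be a couple of lines.
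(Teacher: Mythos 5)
Your argument is correct and is exactly the paper's intended one: the paper gives no separate proof, stating only that the corollary follows from Corollary \ref{cor:coinv} (finiteness of $\coker(\alpha)$ with order dividing the product, hence trivial when the product is $1$) together with Lemma \ref{lem:E(K)(p)=0} (injectivity of $\alpha$ when $E_{p^\infty}(K) = 0$). Nothing further is needed.
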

	
	\begin{lemma} \label{lem:epsilon}
		The kernel of $\epsilon$ is finite of order $\#\ker(\gamma)/\#\ker(\delta)$.
	\end{lemma}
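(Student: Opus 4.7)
The plan is to apply the snake lemma twice, once to each of the two split diagrams \eqref{diag:fundamental2} and \eqref{diag:fundamental3}, and then extract the cardinality identity from a short exact sequence.

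First I would analyze \eqref{diag:fundamental2} in order to show that $\delta$ is surjective. The snake lemma applied to \eqref{diag:fundamental2} yields the six-term exact sequence
\[
0 \to \ker(\alpha) \to \ker(\beta) \to \ker(\delta) \to \coker(\alpha) \to \coker(\beta) \to \coker(\delta) \to 0.
\]
In the proof of Corollary \ref{cor:coinv} it was already shown that $\coker(\beta) = H^2(\Gamma_\ac, E_{p^\infty}(K_\ac)) = 0$, since $\Gamma_\ac \simeq \Z_p$ has $p$-cohomological dimension $1$. Hence $\coker(\delta) = 0$, i.e.\ $\delta$ is surjective.

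Next I would apply the snake lemma to \eqref{diag:fundamental3}. Combined with Theorem \ref{thm:gamma}, which gives that $\gamma$ is surjective with finite kernel, and the surjectivity of $\delta$ established above, the snake lemma collapses the six-term sequence to the short exact sequence
\[
0 \to \ker(\delta) \to \ker(\gamma) \to \ker(\epsilon) \to 0.
\]
Since $\ker(\gamma)$ is finite by Theorem \ref{thm:gamma}, so is $\ker(\epsilon)$, and taking cardinalities gives $\#\ker(\epsilon) = \#\ker(\gamma)/\#\ker(\delta)$.

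There is no substantive obstacle here; the only subtle point is remembering that the proof of Corollary \ref{cor:coinv} already supplies the vanishing $\coker(\beta) = 0$ needed to force surjectivity of $\delta$, which is what upgrades the naive snake-lemma identity $\#\ker(\epsilon) = \#\ker(\gamma) \cdot \#\coker(\delta)/\#\ker(\delta)$ to the stated one.
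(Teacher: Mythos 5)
Your proof is correct and follows essentially the same route as the paper: surjectivity of $\delta$ is deduced from $\coker(\beta)=0$ (Corollary \ref{cor:coinv}) via diagram \eqref{diag:fundamental2}, and the snake lemma applied to \eqref{diag:fundamental3} then yields the short exact sequence $0 \to \ker(\delta) \to \ker(\gamma) \to \ker(\epsilon) \to 0$, giving the cardinality identity. The only difference is that you spell out the six-term sequences explicitly, which the paper leaves implicit.
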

	
	\begin{proof}
		We saw that $\ker(\gamma)$ is finite (Theorem \ref{thm:gamma}), and $\beta$ is always surjective (Corollary \ref{cor:coinv}). Hence, it follows from \eqref{diag:fundamental2} that $\delta$ is surjective. Moreover, \eqref{diag:fundamental3} gives the following exact sequence:
		\[0 \rightarrow \ker(\delta) \rightarrow \ker(\gamma) \rightarrow \ker(\epsilon) \rightarrow 0,\]
		and the Lemma immediately follows.
		%Finally, there is an isomorphism $\ker(\delta) \simeq \coker(\alpha)$ by applying the snake lemma to \eqref{diag:fundamental2}, together with the fact that $\beta$ is an isomorphism. The Lemma then follows immediately.
	\end{proof}
	
	However, we can say more about the size of $\ker(\delta)$. Every prime $\ell \mid N^{+}$ splits in $K/\Q$ as $\ell \calO_K = v \vbar$, and so $c_v(E/K) = c_{\vbar}(E/K) = c_\ell(E/\Q)$ where $c_\ell(E/\Q)$ is the Tamagawa factor at $\ell$ of $E/\Q$. Thus $\# \ker(\gamma) = (\prod_{\ell \mid N^{+}} c_\ell(E/\Q)^{(p)} \times \prod_{v \mid p} d_v^{(p)})^2$ is a square, and the following result shows that $\ker(\delta)$ is bounded by its squareroot. 
	
	\begin{lemma} \label{lem:diag}
		If $p$ is split in $\calO_K$, then \[\# \ker(\delta) \mid \prod_{\ell \mid N^{+}} {c_\ell(E/\Q)^{(p)}} \times \prod_{v \mid p} d_v^{(p)}.\] 
		If $p$ is inert in $\calO_K$ and $d_v^{(p)} = 1$ for the unique prime $v \mid p$, then \[\# \ker(\delta) \mid \prod_{\ell \mid N^{+}} {c_\ell(E/\Q)^{(p)}}.\]
	\end{lemma}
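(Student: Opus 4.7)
The key idea is to improve the bound $\#\ker(\delta) \mid \#\ker(\gamma)$ of Corollary \ref{cor:coinv} and Lemma \ref{lem:epsilon} by a factor of a square root, by exploiting the extra symmetry coming from complex conjugation $\tau \in \Gal(K/\Q)$. Because $E$ is defined over $\Q$ and $K_S/\Q$ is Galois, every group appearing in the fundamental diagram \eqref{diag:fundamental} carries a canonical $\tau$-action, and since $p$ is odd each $\Z_p[\tau]$-module $M$ splits as $M = M^+ \oplus M^-$. The identification $\ker(\delta) = \Ima(\lambda) \cap \ker(\gamma)$ inside $\bigoplus_{v \in S} H^1(K_v, E)(p)$ respects this splitting, so $\ker(\delta) = \ker(\delta)^+ \oplus \ker(\delta)^-$, and it suffices to bound each eigenspace.

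The first step is to compute $|\ker(\gamma)^{\pm}|$ from Theorem \ref{thm:gamma}. For a prime $\ell \mid N^+$ the decomposition $\ell \calO_K = v\vbar$ and the relation $\tau v = \vbar$ identify $\ker(\gamma_v)$ with $\ker(\gamma_{\vbar})$, so that $\ker(\gamma_v) \oplus \ker(\gamma_{\vbar})$ carries the swap involution, whose $\pm$-eigenspaces are respectively the graph of $\pm \mathrm{id}$, each of order $c_\ell(E/\Q)^{(p)}$. The same swap argument in the $p$-split case gives $|\ker(\gamma)_p^{\pm}| = (d_v^{(p)})^2 = \prod_{w \mid p} d_w^{(p)}$, using $d_v = d_{\vbar}$. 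In the $p$-inert case the extra hypothesis $d_v^{(p)} = 1$ kills the local piece at $p$, and for $\ell \mid N^-$ the map $\gamma_\ell$ is already the identity. Multiplying,
\[
|\ker(\gamma)^+| = |\ker(\gamma)^-| = \prod_{\ell \mid N^+} c_\ell(E/\Q)^{(p)} \cdot \prod_{v \mid p} d_v^{(p)},
\]
which is precisely the divisor claimed by the lemma in each of the two cases.

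The main obstacle is to upgrade the trivial bounds $\#\ker(\delta)^{\pm} \leq \#\ker(\gamma)^{\pm}$ --- which together only reproduce Corollary \ref{cor:coinv} --- by showing that one of $\ker(\delta)^{+}, \ker(\delta)^{-}$ is actually trivial. Here the weak Heegner hypothesis should play an essential role. Under \ref{Heeg} the sign of the functional equation of $L(E/K, s)$ is $-1$, so the Heegner point $y_K$ satisfies $\tau y_K \equiv -y_K$ modulo torsion; combined with \ref{Heeg-pt} and Remark \ref{rmk:Kol}, $y_K$ generates $E(K) \otimes \Z_p$ up to a finite group, forcing $\Sel_p(E/K)^+$ to be finite and $\Sel_p(E/K)^-$ to have $\Z_p$-corank $1$. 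To transport this sign asymmetry from $\Sel_p(E/K)$ onto $\Ima(\lambda)$, I would invoke the Poitou--Tate global duality sequence
\[
0 \to \Sel_p(E/K) \to H^1(K_S/K, E_{p^\infty}) \xrightarrow{\lambda} \bigoplus_v H^1(K_v, E)(p) \to \calS_p(E/K)^\vee \to 0,
\]
whose right-exactness is granted by the vanishing $H^2(K_S/K, E_{p^\infty}) = 0$ from Lemma \ref{lem:trivial}. The resulting identification $\coker(\lambda) \cong \calS_p(E/K)^\vee$, together with the Heegner-induced eigenspace structure of $\calS_p(E/K)$ (whose $(-)$-part carries the compatible system of Heegner points), should constrain $\Ima(\lambda) \cap \ker(\gamma)$ to lie in a single eigenspace $\ker(\gamma)^{-\epsilon}$.

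Granted the eigenspace concentration, the proof concludes immediately: $\#\ker(\delta) = \#\ker(\delta)^{-\epsilon} \leq \#\ker(\gamma)^{-\epsilon}$ is exactly the claimed divisibility in both the split and inert cases. The bulk of the argument is the Poitou--Tate/Heegner parity step in the third paragraph; the local eigenspace computation of Step~1 and the final deduction are routine bookkeeping with Theorem \ref{thm:gamma}.
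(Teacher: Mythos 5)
Your local eigenspace bookkeeping is fine, but the heart of your argument --- the claim that $\ker(\delta) = \Ima(\lambda)\cap\ker(\gamma)$ is concentrated in a single eigenspace of complex conjugation --- is never actually proved, and that is exactly where the content of the lemma lies. You only assert that the Heegner/parity structure ``should constrain'' the intersection. The inputs you cite do not give this: the Poitou--Tate identification $\coker(\lambda)\simeq\widehat{\calS_p(E/K)}$ controls the cokernel of $\lambda$, not where $\Ima(\lambda)$ meets the finite subgroup $\ker(\gamma)$; and the corank asymmetry coming from Heegner points lives in $\Sel_p(E/K)$, whereas both $\tau$-eigenspaces of $H^1(K_S/K,E_{p^\infty})$, hence of $\Ima(\lambda)$, have positive corank in general, so nothing forces one of $\Ima(\lambda)\cap\ker(\gamma)^{\pm}$ to vanish. (As a side point, the sign fact is also misquoted: under \ref{Heeg} one has $\tau y_K \equiv \pm y_K$ modulo torsion with the sign governed by the functional equation of $L(E/\Q,s)$, not always $-1$.) So the proposal has a genuine gap at its key step, and I do not see how to close it along the route you sketch.

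The paper's proof is much more elementary and needs no parity or duality input. For $\ell \mid N^{+}$ split as $(\ell)=v\vbar$, the decomposition groups satisfy $G_v = G_{\vbar}$ inside $\Gal(K_S/K)$, so the localizations at $v$ and at $\vbar$ of \emph{any} global class coincide under the resulting identification $H^1(K_v,E)(p)\simeq H^1(K_{\vbar},E)(p)$. Hence the $(v,\vbar)$-component of every element of $\Ima(\lambda)$ lies in the diagonal, and its intersection with $\ker(\gamma_v)\times\ker(\gamma_{\vbar})$ injects into the single factor $\ker(\gamma_v)$, of order $c_\ell(E/\Q)^{(p)}$; this is a statement about all of $\Ima(\lambda)$, not about a $\tau$-eigenspace, and it is what produces the square-root saving. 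When $p$ splits the same diagonal trick applies to the two primes above $p$, yielding the factor $\prod_{v\mid p}d_v^{(p)}$; when $p$ is inert there is no conjugate pair to play against, which is precisely why the extra hypothesis $d_v^{(p)}=1$ is imposed there. To make your write-up correct you would have to either supply a genuine proof of the eigenspace concentration (which I do not believe follows from the facts you invoke) or replace that step by this diagonal argument.
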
 
	\begin{proof}
		Note that $\ker(\delta) = \Ima(\lambda) \cap \ker(\gamma)$. For each prime $\ell \mid N^{+}$ that splits as $(\ell) = v \vbar$ in $\calO_K$, we have $G_v = G_{\vbar}$ in $\Gal(K_S/K)$ where $G_v$ and $G_{\vbar}$ are respectively the decomposition groups of $v$ and $\vbar$. Hence, $\Ima(\lambda) \cap (\ker(\gamma_v) \times \ker(\gamma_{\vbar}))$ lies in the diagonal of $H^1(\Gamma_v, E)(p) \times H^1(\Gamma_{\vbar}, E)(p)$. When $p$ is split in $K$, a similar argument also holds for the primes above $p$, and the conclusion follows.
		
		On the other hand, it is not clear if the conclusion holds generally when $p$ is inert. However, under the simplifying assumption $d_v^{p} = 1$ for $v \mid p$, the same argument implies $\# \ker(\delta) \mid \prod_{\ell \mid N^{+}} {c_\ell(E/\Q)^{(p)}}.$
	\end{proof}
	
	%In Lemma \ref{lem:gamma} and Lemma \ref{lem:coinv} below, we will see that both $\ker(\gamma)$ and $\ker(\alpha)$ are finite. The main goal of this section is to prove the following:

	%	\begin{thm} \label{thm:coker}
		%		Assume hypotheses \ref{good}, \ref{ord}, \ref{Heeg} and \ref{surj}. Then $\widehat{\coker(\lambda_\ac^{\Gamma_\ac})} \simeq \Z_p.$
		%	\end{thm}
	%	\begin{proof}
		%		Once again, we analyze the fundamental diagram (\ref{diag:fundamental}). By the same argument as the proof of Lemma \ref{lem:coinv}, $\beta$ is an isomorphism. Moreover, Lemma \ref{lem:gamma} implies that $\gamma$ is surjective with finite kernel. 
		%		Since $\lambda_\ac^{\Gamma_\ac} \circ \beta = \gamma \circ \lambda$, the following sequence is exact:
		%		\begin{equation} \label{eqn:comp}
			%			\ker(\gamma) \rightarrow \coker(\lambda) \rightarrow \coker(\lambda_\ac^{\Gamma_\ac}) \rightarrow \coker(\gamma)		
			%		\end{equation} 
		%		 Hypothesis \ref{Heeg} implies that $\Sha(E/K)(p)$ is finite (see Remark \ref{rmk:Kol}). It follows from the proof of Proposition 1.9 in \cite{CS00} that $\coker(\lambda) = \widehat{E(K)^\ast}$, where $E(K)^\ast = \varprojlim_{n} E(K)/p^n E(K)$. Moreover, hypothesis \ref{Heeg} implies that $E/K$ must have Mordell-Weil rank $1$ and \ref{surj} implies $E_{p^\infty}(K) = 0$. It follows that $E(K)^{\ast} \simeq \Z_p$ and therefore $\widehat{\coker(\lambda)} \simeq \Z_p.$ Moreover, $\ker(\gamma)$ is finite and $\coker(\gamma) = 0$ in the short exact sequence (\ref{eqn:comp}), which allows us to conclude that $\widehat{\coker(\lambda_\ac^{\Gamma_\ac})} \simeq \Z_p$.
		%	\end{proof}

	Under the weak Leopoldt conjecture (proved in Lemma \ref{lem:trivial}), there is an exact sequence
	\begin{equation} \label{eqn:Sel}
		\begin{aligned}
			0 \rightarrow \Sel_p(E/K_\ac) \rightarrow & H^1(K_S/K_\ac, E_{p^\infty})  \\ & \xrightarrow{\lambda_\ac} \bigoplus_{v \in S} J_v^1(E/K_\ac) \rightarrow \widehat{\calS_p(E/K_\ac)} \rightarrow 0.
		\end{aligned}
	\end{equation}		
	
	%	Since we assumed \ref{Heeg-pt}, it is not too difficult to prove the following classical result. 
	%	\begin{thm}
		%		Both $X(E/K_\ac)$ and $\calS_p(E/K_\ac)$ have rank $1$ as $\Lambda(\Gamma_\ac)$-modules, and the latter is cofree.
		%	\end{thm}
	%	\begin{proof}
		%		We use the fact that $\calS_p(E/K_\ac) = \Hom_{\Lambda}(X(E/K_\ac), \Lambda)$ to conclude that $\calS_p(E/K_\ac)$ is cofree. Since $X(E/K_\ac)^{\Gamma_\ac}$ and $X(E/K)$ are pseudo-isomorphic (Corollary \ref{cor:coinv}) and the latter has $\Z_p$-rank $1$, the $\Lambda(\Gamma_\ac)$ rank of $X(E/K_\ac)$ is at most $1$. The non-triviality of the Heegner points $\mathcal{H}_\ac$ gives the reverse inequality, and the conclusion follows.
		%	\end{proof}
	
	We now state and prove the main theorem of this section.
	
	\begin{thm} \label{thm:main}
		The homology group $H_1(\Gamma_\ac, X(E/K_\ac))$ is finite of order \[\frac{\# \ker(\delta) \cdot[\calS_p(E/K): \Ima(\calS_p(E/K_\ac)_{\Gamma_\ac})]}{\#\ker(\gamma)}.\] Thus, $H_1(\Gamma_\ac, X(E/K_\ac)) = 0$ if and only if \[\# \ker(\gamma) = [\calS_p(E/K) : \Ima(\calS_p(E/K_\ac)_{\Gamma_\ac})] \# \ker(\delta).\]
	\end{thm}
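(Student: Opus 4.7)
The plan is to compute $H^1(\Gamma_\ac, \Sel_p(E/K_\ac))$, which by Pontryagin duality has the same cardinality as $H_1(\Gamma_\ac, X(E/K_\ac))$. Splitting the four-term sequence \eqref{eqn:Sel} via $\Ima(\lambda_\ac)$ into
\[
0 \to \Sel_p(E/K_\ac) \to H^1(K_S/K_\ac, E_{p^\infty}) \to \Ima(\lambda_\ac) \to 0
\]
and
\[
0 \to \Ima(\lambda_\ac) \to \bigoplus_{v \in S} J_v^1(E/K_\ac) \to \widehat{\calS_p(E/K_\ac)} \to 0,
\]
I take $\Gamma_\ac$-cohomology of both. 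Lemma \ref{lem:trivial}(ii) gives $H^1(\Gamma_\ac, H^1(K_S/K_\ac, E_{p^\infty})) = 0$, so the first sequence yields $H^1(\Gamma_\ac, \Sel_p(E/K_\ac)) \simeq \Ima(\lambda_\ac)^{\Gamma_\ac}/\Ima(\lambda_\ac^{\Gamma_\ac})$, while the second identifies $\Ima(\lambda_\ac)^{\Gamma_\ac}$ with the kernel of the map $(\bigoplus_{v \in S} J_v^1(E/K_\ac))^{\Gamma_\ac} \to \widehat{\calS_p(E/K_\ac)}^{\Gamma_\ac}$. Combining yields the fundamental sequence
\[
0 \to H^1(\Gamma_\ac, \Sel_p(E/K_\ac)) \to \coker(\lambda_\ac^{\Gamma_\ac}) \to \widehat{\calS_p(E/K_\ac)}^{\Gamma_\ac}.
\]

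Next, I compute $\coker(\lambda_\ac^{\Gamma_\ac})$ by the snake lemma applied to diagrams \eqref{diag:fundamental2} and \eqref{diag:fundamental3}. From the proof of Corollary \ref{cor:coinv}, $\coker(\beta) = 0$, so the snake lemma on \eqref{diag:fundamental2} forces $\coker(\delta) = 0$; Theorem \ref{thm:gamma} gives $\coker(\gamma) = 0$. The snake lemma on \eqref{diag:fundamental3} then produces $0 \to \ker(\delta) \to \ker(\gamma) \to \ker(\epsilon) \to 0$ together with surjectivity of $\epsilon$. Since the $K$-analog of \eqref{eqn:Sel}---valid by Lemma \ref{lem:trivial}(i)---yields $\coker(\lambda) = \widehat{\calS_p(E/K)}$, one obtains
\[
\coker(\lambda_\ac^{\Gamma_\ac}) \simeq \widehat{\calS_p(E/K)}\big/\bigl(\ker(\gamma)/\ker(\delta)\bigr).
\]

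Finally, I identify the composition $\eta \colon \widehat{\calS_p(E/K)} \twoheadrightarrow \coker(\lambda_\ac^{\Gamma_\ac}) \to \widehat{\calS_p(E/K_\ac)}^{\Gamma_\ac}$ with the Pontryagin dual of the natural corestriction $\calS_p(E/K_\ac)_{\Gamma_\ac} \to \calS_p(E/K)$: this follows from the naturality of \eqref{eqn:Sel} with respect to the inclusion $K \hookrightarrow K_\ac$. Pontryagin duality then gives
\[
\#\ker(\eta) = [\calS_p(E/K) : \Ima(\calS_p(E/K_\ac)_{\Gamma_\ac})].
\]
Since $\ker(\gamma)/\ker(\delta)$ lies in the kernel of the projection $\widehat{\calS_p(E/K)} \twoheadrightarrow \coker(\lambda_\ac^{\Gamma_\ac})$, it is automatically contained in $\ker(\eta)$, so
\[
H^1(\Gamma_\ac, \Sel_p(E/K_\ac)) \simeq \ker(\eta)\big/\bigl(\ker(\gamma)/\ker(\delta)\bigr).
\]
Taking orders yields both the finiteness and the claimed formula, from which the vanishing criterion follows at once. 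The main obstacle will be verifying the naturality statement that identifies the composition with the dual corestriction $\eta$, together with ensuring that $\#\ker(\eta) = [\calS_p(E/K) : \Ima(\calS_p(E/K_\ac)_{\Gamma_\ac})]$ is finite---the latter being a compact-side control statement dual to Corollary \ref{cor:coinv} and consistent with the rank analysis of $\calS_p(E/K_\ac)$ in the indefinite setting.
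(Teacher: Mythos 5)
Your proposal is correct and follows essentially the same route as the paper: splitting \eqref{eqn:Sel} into the two short exact sequences, using Lemma \ref{lem:trivial} to identify $H^1(\Gamma_\ac, \Sel_p(E/K_\ac))$ with the kernel of $\coker(\lambda_\ac^{\Gamma_\ac}) \rightarrow \widehat{\calS_p(E/K_\ac)}^{\Gamma_\ac}$, re-deriving Lemma \ref{lem:epsilon} via the snake lemma on \eqref{diag:fundamental2} and \eqref{diag:fundamental3}, and comparing with the dual of the corestriction map exactly as in diagram \eqref{diag:coker}. The two points you flag at the end (naturality of the identification with the dual corestriction, and finiteness of $[\calS_p(E/K):\Ima(\calS_p(E/K_\ac)_{\Gamma_\ac})]$, which under \ref{Heeg-pt} follows from Perrin-Riou's result on Heegner points) are handled at the same level of brevity in the paper itself, so there is no gap relative to it.
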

	
	\begin{proof}
		Recall the exact sequence \ref{eqn:Sel}, which we may split into two short exact sequences
		\begin{equation} \label{eqn:Sel-1}
			0 \rightarrow \Sel_p(E/K_\ac) \rightarrow H^1(K_S/K_\ac, E_{p^\infty}) \xrightarrow{\lambda_\ac} \Ima(\lambda_\ac) \rightarrow 0,
		\end{equation}
		\begin{equation} \label{eqn:Sel-2}
			0 \rightarrow \Ima(\lambda_\ac) \rightarrow \bigoplus_{v \in S} J_v^1(E/K_\ac) \rightarrow \widehat{\calS_p(E/K_\ac)} \rightarrow 0.
		\end{equation}

		Because $H^1(\Gamma_\ac, H^1(K_S/K_\ac, E_{p^\infty})) = 0$ (Lemma \ref{lem:trivial}), taking $\Gamma_\ac$-invariants of the short exact sequence \eqref{eqn:Sel-1} gives the following exact sequence:
		\begin{align} \label{eqn:inv-1}  
			0 \rightarrow \Sel_p(E/K_\ac)^{\Gamma_\ac} \rightarrow H^1(K_S/K_\ac, E_{p^\infty})^{\Gamma_\ac} & \xrightarrow{} (\Ima(\lambda_\ac))^{\Gamma_\ac} 
			\\ & \rightarrow H^1(\Gamma_\ac, \Sel_p(E/K_\ac)) \rightarrow 0. \notag
		\end{align}
		Hence, $H^1(\Gamma_\ac, \Sel_p(E/K_\ac)) \simeq (\Ima(\lambda_\ac))^{\Gamma_\ac}/\Ima(\phi_\ac).$ On the other hand, taking $\Gamma_\ac$-invariants of the short exact sequence \eqref{eqn:Sel-2} gives exactness of
		\begin{align} \label{eqn:inv-2}
			0 \rightarrow (\Ima(\lambda_\ac))^{\Gamma_\ac} \rightarrow \bigoplus_{v \in S} J_v^1(E/K_\ac)^{\Gamma_\ac} \rightarrow \widehat{\calS_p(E/K_\ac)}^{\Gamma_\ac} \rightarrow H^1(\Gamma_\ac, \Ima(\lambda_\ac)).
		\end{align}
		Now, \eqref{eqn:Sel-1} again implies that there is an exact sequence
		\begin{align}
			H^1(\Gamma_\ac, H^1(K_S/K_\ac, E_{p^\infty})) \rightarrow H^1(\Gamma_\ac, \Ima(\lambda_\ac)) \rightarrow H^2(\Gamma_\ac, \Sel_p(E/K_\ac)).
		\end{align}
		Lemma \ref{lem:trivial} implies $H^1(\Gamma_\ac, H^1(K_S/K_\ac, E_{p^\infty})) = 0$. Moreover, \[H^2(\Gamma_\ac, \Sel_p(E/K_\ac)) = 0\] because $\Gamma_\ac$ has $p$-cohomological dimension $1$. Thus, $H^1(\Gamma_\ac, \Ima(\lambda_\ac)) = 0$, which lets us rewrite
		\eqref{eqn:inv-2} as
		\begin{equation}
			0 \rightarrow (\Ima(\lambda_\ac))^{\Gamma_\ac} \rightarrow \bigoplus_{v \in S} J_v^1(E/K_\ac)^{\Gamma_\ac} \rightarrow \widehat{\calS_p(E/K_\ac)}^{\Gamma_\ac} \rightarrow 0.
		\end{equation}
		It follows that
		\begin{equation}
			0 \rightarrow \frac{(\Ima(\lambda_\ac))^{\Gamma_\ac}}{\Ima(\phi_\ac)} \rightarrow \frac{\bigoplus_{v \in S} J_v^1(E/K_\ac)^{\Gamma_\ac}}{\Ima(\phi_\ac)} \rightarrow \widehat{\calS_p(E/K_\ac)}^{\Gamma_\ac} \rightarrow 0
		\end{equation}
		is exact. The group $\bigoplus_{v \in S} J_v^1(E/K_\ac)^{\Gamma_\ac}/\Ima(\phi_\ac)$ is in fact $\coker(\lambda_\ac^{\Gamma_\ac})$ in (\ref{eqn:Sel}), and therefore $H^1(\Gamma_\ac, \Sel_p(E/K_\ac)) \simeq (\Ima(\lambda_\ac))^{\Gamma_\ac}/\Ima(\phi_\ac)$ is the kernel of the surjection
		\[ \coker(\lambda_\ac^{\Gamma_\ac}) \rightarrow \widehat{\calS_p(E/K_\ac)}^{\Gamma_\ac}\]
		induced by (\ref{eqn:Sel}). Consider the following commutative diagram:
		\begin{equation} \label{diag:coker}
			\begin{tikzcd}
				{\coker(\lambda_\ac^{\Gamma_\ac})} \arrow[r, two heads] & {\widehat{\calS_p(E/K_\ac)}^{\Gamma_\ac}} \\
				{\coker(\lambda) = \widehat{\calS_p(E/K)}} \arrow[ru, two heads] \arrow[u, two heads, "\epsilon"] &   
			\end{tikzcd}
		\end{equation}
		We saw in Lemma \ref{lem:epsilon} that $\epsilon$ is surjective with kernel of finite order \[\#\ker(\gamma)/\#\coker(\alpha).\] On the other hand, the kernel of the diagonal map is finite of order \[[\calS_p(E/K) : \Ima(\calS_p(E/K_\ac)_{\Gamma_\ac})].\]
		Hence, the kernel of the horizontal map, which is $H^1(\Gamma_\ac, \Sel_p(E/K_\ac))$, is also finite. Its order is 
		\[\frac{[\calS_p(E/K) : \Ima(\calS_p(E/K_\ac)_{\Gamma_\ac})]}{\#\ker(\epsilon)} = \frac{[\calS_p(E/K) : \Ima(\calS_p(E/K_\ac)_{\Gamma_\ac})]\# \ker(\delta)}{\#\ker(\gamma)},\]
		as required.
	\end{proof}
	
	Write $K_\ac = \cup_{n \geq 0} K_n$ and let $U \calS_p(E/K)$ be the module of universal norms 
	\[U \calS_p(E/K) := \bigcap_{n \geq 0} \Cores_{K_{n}/K} \calS_p(E/K_{n})\]
	inside $\calS_p(E/K)$. If $\Ima(\calS_p(E/K_{\ac})_{\Gamma_\ac}) = \calS_p(E/K)$, then the corestriction maps $\Cores_{K_n/K}$ are surjective, which implies $U\calS_p(E/K) = \calS_p(E/K).$  The converse also holds, for if $\Cores_{K_n/K}$ are surjective then the natural map
	\[\varprojlim_{n} \calS_p(E/K_n) \rightarrow \calS_p(E/K)\]
	is also surjective because each $\calS_p(E/K_n)$ is a finite-length $\Z_p$-module. With  respect to studying finite submodules of $X(E/K_\ac)$, Bertolini proved 
	\begin{thm} \cite[Corollary 6.2]{Ber01} \label{thm:Ber01}
		Assume
		\begin{enumerate} 
			%\item $N^{-}$ is the product of an even number of primes,
			\item $\overline{\rho}_{E, p}: G_\Q \rightarrow Aut(E_p)$ is surjective,
			\item $p \nmid \prod_{v \mid p} d_v \cdot \prod_{\ell \mid N} c_\ell(E/\Q).$
		\end{enumerate}
		Then the module $X(E/K_\ac)$ does not have any non-trivial finite $\Lambda(\Gamma_\ac)$-submodule if and only if $\calS_p(E/K)/U\calS_p(E/K)$ is torsion-free.
	\end{thm}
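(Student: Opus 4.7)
The plan is to derive this from Theorem \ref{thm:main} together with Proposition \ref{lem:H1=0} and the structural information coming from the indefinite setting. First I would unpack what hypothesis (2) of the statement buys us through Theorem \ref{thm:gamma}: since $p \nmid \prod_{v \mid p} d_v \cdot \prod_{\ell \mid N} c_\ell(E/\Q)$, every factor in $\prod_{v \mid N^+} c_v(E/K)^{(p)} \times \prod_{v \mid p} (d_v^{(p)})^2$ is trivial, hence $\#\ker(\gamma) = 1$. From the short exact sequence $0 \to \ker(\delta) \to \ker(\gamma) \to \ker(\epsilon) \to 0$ extracted in the proof of Lemma \ref{lem:epsilon}, this forces $\#\ker(\delta) = 1$ as well. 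Plugging into Theorem \ref{thm:main} collapses the formula to
\[
\#H_1(\Gamma_\ac, X(E/K_\ac)) = [\calS_p(E/K) : \Ima(\calS_p(E/K_\ac)_{\Gamma_\ac})].
\]

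Next I would identify the right-hand side with $[\calS_p(E/K) : U\calS_p(E/K)]$ using the Mittag--Leffler argument given in the paragraph just before the theorem: each $\calS_p(E/K_n)$ is a finite-length $\Z_p$-module, so the image of $\calS_p(E/K_\ac) = \varprojlim_n \calS_p(E/K_n)$ in $\calS_p(E/K)$ coincides with $\bigcap_n \Cores_{K_n/K}\calS_p(E/K_n) = U\calS_p(E/K)$. By Remark \ref{rmk:Kol} together with the finiteness of $\Sha(E/K)(p)$, the compact Selmer group $\calS_p(E/K) \simeq \Z_p$, and by norm-compatibility of Heegner points together with \ref{Heeg-pt}, the point $y_K$ gives a non-zero element of $U\calS_p(E/K)$. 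Hence $U\calS_p(E/K) = p^k \calS_p(E/K)$ for some finite $k \geq 0$, and the quotient $\calS_p(E/K)/U\calS_p(E/K)$ is torsion-free if and only if $k = 0$, equivalently if and only if $\#H_1(\Gamma_\ac, X(E/K_\ac)) = 1$.

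This immediately gives the ``if'' direction: the vanishing of $H_1(\Gamma_\ac, X(E/K_\ac))$ combined with Proposition \ref{lem:H1=0} yields that $X(E/K_\ac)$ has no non-trivial finite $\Lambda(\Gamma_\ac)$-submodules. For the ``only if'' direction, the challenge is that Proposition \ref{lem:H1=0} is one-sided; a priori $H_1$ could be non-trivial while $X$ still lacks finite submodules. To close this gap I would exploit the indefinite structure: by \cite{How04, How04b} the module $X(E/K_\ac)$ has $\Lambda(\Gamma_\ac)$-rank one, and the free rank-one Heegner submodule $\calH_\ac \subset \calS_p(E/K_\ac)$ composed with the Perrin-Riou injection $\calS_p(E/K_\ac) \hookrightarrow \Hom_\Lambda(X(E/K_\ac), \Lambda)$ produces an explicit $\Lambda$-homomorphism $X(E/K_\ac) \to \Lambda$. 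A divisibility of $y_K$ by $p$ in $\calS_p(E/K)$ forces this homomorphism to factor through $p\Lambda$ after passing to $\Gamma_\ac$-coinvariants, which (using that the free rank is exactly one) creates a $\Gamma_\ac$-fixed $p$-torsion element in $X(E/K_\ac)$, hence a non-trivial finite submodule.

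The main obstacle is precisely this last step: turning the numerical equality $\#H_1 = [\calS_p(E/K) : U\calS_p(E/K)]$ into a \emph{structural} construction of a finite submodule in the non-torsion-free case. The essential input is the rank-one assertion of Howard together with the explicit Heegner generator, which permits one to decompose $X(E/K_\ac)$ up to pseudo-isomorphism as $\Lambda \oplus X_{\mathrm{tors}}$ and to detect failure of this decomposition to be an honest isomorphism by the discrepancy between $y_K$ and a generator of $\calS_p(E/K)$.
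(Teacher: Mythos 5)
Your first two paragraphs follow essentially the route the paper itself takes: the statement as cited is Bertolini's result, which the paper does not reprove; its own analogue (Theorem \ref{thm:Ber}) is obtained exactly as you propose, by noting that hypothesis (2) trivializes $\ker(\gamma)$ (Theorem \ref{thm:gamma}) and hence $\ker(\delta)$, so Theorem \ref{thm:main} reduces everything to whether $[\calS_p(E/K):\Ima(\calS_p(E/K_\ac)_{\Gamma_\ac})]=1$, and this index is $1$ precisely when $U\calS_p(E/K)=\calS_p(E/K)$; under \ref{Heeg-pt} and hypothesis (1) (so $E_p(K)=0$, $\calS_p(E/K)\simeq\Z_p$, and $y_K\otimes 1$ is a nonzero universal norm because $d_v^{(p)}=1$), equality of $U\calS_p(E/K)$ with $\calS_p(E/K)$ is the same as torsion-freeness of the quotient. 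Up to this point your argument is correct and matches the paper.

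The genuine gap is your ``only if'' direction. The construction you sketch --- pushing the Heegner module through the injection $\calS_p(E/K_\ac)\hookrightarrow\Hom_{\Lambda}(X(E/K_\ac),\Lambda)$, arguing that $p$-divisibility of $y_K$ makes the map ``factor through $p\Lambda$ after passing to coinvariants'' and thereby ``creates a $\Gamma_\ac$-fixed $p$-torsion element'' --- is not a proof as written: a homomorphism $X(E/K_\ac)\to\Lambda$ whose image lies in $p\Lambda$ produces no torsion element of $X(E/K_\ac)$, and a pseudo-isomorphism $X(E/K_\ac)\sim\Lambda\oplus X_{\mathrm{tors}}$ is by its nature blind to finite submodules, so the ``discrepancy'' you invoke cannot be detected this way. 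The missing step is in fact much simpler and needs no Heegner input: $H_1(\Gamma_\ac,X(E/K_\ac))\cong X(E/K_\ac)[\gamma_\ac-1]$ is itself a $\Lambda(\Gamma_\ac)$-submodule of $X(E/K_\ac)$, and Theorem \ref{thm:main} shows it is finite. Hence if $X(E/K_\ac)$ has no non-trivial finite $\Lambda(\Gamma_\ac)$-submodule, this group must vanish, and the formula of Theorem \ref{thm:main} (with $\ker(\gamma)$ and $\ker(\delta)$ trivial) forces $[\calS_p(E/K):\Ima(\calS_p(E/K_\ac)_{\Gamma_\ac})]=1$, i.e.\ $U\calS_p(E/K)=\calS_p(E/K)$ and the quotient is torsion-free; the converse direction is the one you already have via Proposition \ref{lem:H1=0}. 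Replacing your last paragraph by this observation makes the argument complete and brings it in line with the paper's.
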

	Section 7 in {\it loc. cit.} also gives some conditions under which the module $\calS_p(E/K)/U\calS_p(E/K)$ is torsion-free. In particular,
	
	\begin{thm} \cite[Theorem 7.1]{Ber01}
		Assume the same hypotheses as Theorem \ref{thm:Ber01} and
		\begin{enumerate}
			\item The module of Heegner points over $K_n$ is non-zero modulo $pE(K_n)$ for some $n \geq 0$,
			\item The natural map $\Sha(E/K_n) \rightarrow \Sha(E/K_{n + 1})$ is an injection for all $n \geq 0$.
		\end{enumerate}
		Then $\calS_p(E/K)/U \calS_p(E/K)$ is torsion-free, and $X(E/K_\ac)$ does not have any non-trivial finite $\Lambda(\Gamma_\ac)$-submodule.
	\end{thm}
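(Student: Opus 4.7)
The first conclusion implies the second directly by Theorem \ref{thm:Ber01}, so the heart of the argument is to show that $\calS_p(E/K)/U\calS_p(E/K)$ is torsion-free. My plan is to prove the stronger statement $U\calS_p(E/K) = \calS_p(E/K)$, which trivially makes the quotient zero.

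First, I would reduce the target to a concrete statement about a single element. By Remark \ref{rmk:Kol} (whose hypotheses are implied by (i)), $E(K)$ has rank $1$ and $\Sha(E/K)[p^\infty]$ is finite. Taking the inverse limit of the $p^n$-descent sequences yields $\calS_p(E/K) \cong E(K) \otimes \Z_p$, and the surjectivity of $\overline{\rho}_{E,p}$ forces $E(K)[p^\infty] = 0$, so $\calS_p(E/K) \cong \Z_p$. Hence $U\calS_p(E/K) = p^k \Z_p$ for some $k \geq 0$, and torsion-freeness of the quotient is equivalent to $k = 0$, i.e., to exhibiting a primitive generator of $\calS_p(E/K)$ inside $U\calS_p(E/K)$.

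Next, I would produce such a generator from the Heegner system. Using the norm relations for Heegner points together with the good ordinary hypothesis (which combined with $p \nmid \prod_{v \mid p} d_v$ makes $a_p$ a $p$-adic unit), one constructs a normalized Heegner family $\{\eta_n\} \in \varprojlim_n \calS_p(E/K_n) = \calS_p(E/K_\ac)$ satisfying $\Cores_{K_{n+1}/K_n}(\eta_{n+1}) = \eta_n$. Its image $\eta_0 \in \calS_p(E/K)$ then lies in $U\calS_p(E/K)$ by construction. Hypothesis (i), combined with Kolyvagin's descent, forces $\eta_0$ (a $p$-adic unit multiple of $y_K$) to be a $\Z_p$-generator of $\calS_p(E/K) \cong \Z_p$, yielding $U\calS_p(E/K) = \calS_p(E/K)$.

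The main obstacle I anticipate is the norm-compatibility step: lifting the raw Heegner points from $E(K_n)$ to the compact Selmer groups $\calS_p(E/K_n)$ while maintaining strict norm-compatibility requires the Kummer maps $E(K_n) \otimes \Z_p \hookrightarrow \calS_p(E/K_n)$ to be sufficiently well-behaved along the tower. This is exactly where hypothesis (ii) enters: the injectivity of $\Sha(E/K_n) \to \Sha(E/K_{n+1})$ — equivalently, control of $T_p \Sha(E/K_n)$ via the Kummer exact sequence — eliminates the obstruction from the $\Sha$-component of $\calS_p(E/K_n)$ and allows the normalization of Heegner points into a genuine norm-compatible sequence. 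Carrying out this step cleanly in the presence of the local terms at primes above $p$ (ramified in $K_\ac/K$) and at primes dividing $N^{+}$ is the most delicate part, and it is precisely why $p \nmid \prod d_v \prod c_\ell$ is invoked — the same local factors appearing in $\ker(\gamma)$ of diagram \eqref{diag:fundamental}.
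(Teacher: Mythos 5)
The paper does not actually prove this statement; it is quoted from \cite[Theorem 7.1]{Ber01}, so your proposal has to stand on its own, and as written it does not. A first, smaller problem is your parenthetical claim that hypothesis (i) implies the hypotheses of Remark~\ref{rmk:Kol}: hypothesis (i) is a statement about the Heegner module over some layer $K_n$ modulo $pE(K_n)$, and it does not imply that $y_K$ has infinite order in $E(K)$ (Heegner classes at higher layers are non-torsion for $n$ large by Cornut--Vatsal irrespective of whether $y_K$ is torsion). What you are really using is the section's standing assumption \ref{Heeg-pt}; granting that, the reduction $\calS_p(E/K)\cong\Z_p$ and the equivalence of torsion-freeness with $U\calS_p(E/K)=\calS_p(E/K)$ are fine and are exactly what the paper records after Theorem~\ref{thm:Ber01}.

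The decisive gap is your last step. You assert that hypothesis (i), ``combined with Kolyvagin's descent,'' forces the bottom class $\eta_0$ (a unit multiple of $y_K$, which is legitimate only because $p\nmid\prod_{v\mid p}d_v$ makes the regularizing Euler factors units) to generate $\calS_p(E/K)$, i.e.\ that $y_K\notin pE(K)$ modulo torsion. Kolyvagin's machinery runs in the opposite direction: from non-divisibility of $y_K$ at level $0$ one bounds the Selmer group; it does not let you descend non-divisibility from level $n$ to level $0$, and traces can destroy primitivity. Worse, the strengthened statement you aim for is not what the theorem asserts and is likely false under its hypotheses: as the paper recalls before Theorem~\ref{cor:main}, via Gross--Zagier and BSD the condition that $y_K\otimes 1$ generate $E(K)\otimes\Z_p$ amounts (when $p\nmid\prod_\ell c_\ell$) to $p\nmid\#\Sha(E/K)$, which is nowhere assumed; the theorem only claims that the universal-norm module --- which may be strictly larger than the image of the Heegner module --- is all of $\calS_p(E/K)$. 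Finally, the role you assign to hypothesis (ii) is misplaced: the norm-compatible regularized Heegner family in $\calS_p(E/K_\ac)$ exists from ordinarity and the norm relations alone (Perrin-Riou, Howard), with no condition on $\Sha$; in Bertolini's actual argument hypothesis (ii) is what allows the level-$n$ non-divisibility statement to be propagated down the tower and compared with $\calS_p(E/K)$. As written, the entire content of the theorem is concentrated in a sentence that is asserted rather than proved.
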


	In our setting, when \ref{Heeg-pt} holds,  both $\calS_p(E/K)$ and $US_p(E/K)$ have $\Z_p$-rank $1$. Therefore, the criterion $\calS_p(E/K)/U\calS_p(E/K)$ being torsion-free in Theorem \ref{thm:Ber01} is equivalent to $\calS_p(E/K) = U\calS_p(E/K)$. We will give another proof of Theorem \ref{thm:Ber-tor} in our setting using Theorem \ref{thm:main}. First, we recall that $\Sel_p(E/K_\ac)$ has $\Lambda(\Gamma_\ac)$-corank $1$ which is also the $\Z_p$-corank of $\Sel_p(E/K)$. Recall from Proposition \ref{lem:H1=0} that $X(E/K_\ac)$ does not have any finite $\Lambda(\Gamma_\ac)$-submodule if $H^1(\Gamma_\ac, \Sel_p(E/K_\ac)) = 0$. We show that the condition $H^1(\Gamma_\ac, \Sel_p(E/K_\ac)) = 0$ is equivalent to $\calS_p(E/K) = U\calS_p(E/K)$ under the same hypotheses as \cite{Ber01}. 
	
	\begin{thm} \label{thm:Ber} Suppose that $\overline{\rho}_{E, p}: G_\Q \rightarrow \Aut(E_{p^\infty})$ is surjective and $p \nmid \prod_{v \mid p} d_v \cdot \prod_{\ell \mid N} c_\ell(E/\Q).$ Then $X(E/K_\ac)$ does not have any non-trivial finite $\Lambda(\Gamma_\ac)$-submodule if and only if $U\calS_p(E/K) = \calS_p(E/K)$.
	\end{thm}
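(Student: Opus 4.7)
The plan is to leverage Theorem \ref{thm:main} to rewrite $H^1(\Gamma_\ac, \Sel_p(E/K_\ac))$ in terms of the universal-norm condition, and then to pass between this cohomology group and the absence of non-trivial finite submodules of $X(E/K_\ac)$ via Proposition \ref{lem:H1=0}.

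First I would check that the hypothesis $p \nmid \prod_{v \mid p} d_v \cdot \prod_{\ell \mid N} c_\ell(E/\Q)$ forces both $\#\ker(\gamma) = 1$ and $\#\ker(\delta) = 1$. The former is Theorem \ref{thm:gamma}, using that every $\ell \mid N^{+}$ splits in $K$ as $\ell\calO_K = v \vbar$ with $c_v(E/K) = c_\ell(E/\Q)$, so both $c_v(E/K)^{(p)}$ and $d_v^{(p)}$ are trivial. The latter then follows from $\ker(\delta) \subseteq \ker(\gamma)$, which is apparent by inspection of diagram \eqref{diag:fundamental3}. Substituting into Theorem \ref{thm:main} yields $|H_1(\Gamma_\ac, X(E/K_\ac))| = [\calS_p(E/K) : \Ima(\calS_p(E/K_\ac)_{\Gamma_\ac})]$. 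Next I would identify the image with the universal norms: the projection $\calS_p(E/K_\ac) = \varprojlim_n \calS_p(E/K_n) \to \calS_p(E/K)$ factors through the coinvariants and lands in $\bigcap_n \Cores_{K_n/K}\calS_p(E/K_n) = U\calS_p(E/K)$, while the reverse containment is a Mittag-Leffler argument (each $\calS_p(E/K_n)$ is a compact $\Z_p$-module, so the decreasing sequence of images in $\calS_p(E/K)$ stabilizes by Noetherianity). Combined with Pontryagin duality $H^1(\Gamma_\ac, \Sel_p(E/K_\ac))^\vee \simeq H_1(\Gamma_\ac, X(E/K_\ac))$, this delivers the equivalence $H^1(\Gamma_\ac, \Sel_p(E/K_\ac)) = 0 \Leftrightarrow U\calS_p(E/K) = \calS_p(E/K)$.

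Finally I would translate the cohomological vanishing into the finite-submodule statement. The forward implication of the theorem then follows from Proposition \ref{lem:H1=0}: vanishing of $H^1(\Gamma_\ac, \Sel_p(E/K_\ac))$ implies that $X(E/K_\ac)$ has no non-trivial finite $\Lambda(\Gamma_\ac)$-submodules. For the converse, $H_1(\Gamma_\ac, X(E/K_\ac)) = X(E/K_\ac)[T_\ac]$ is a $\Lambda(\Gamma_\ac)$-submodule of $X(E/K_\ac)$ that is finite by Theorem \ref{thm:main}; if $X(E/K_\ac)$ has no non-trivial finite submodules this submodule must vanish, so $H^1 = 0$ by duality. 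The main obstacle I expect is the careful identification of the image of coinvariants with $U\calS_p(E/K)$ in the second step; although it is conceptually routine, one must handle the inverse limit and the Mittag-Leffler condition with some care. The remaining steps are essentially bookkeeping with the formula already supplied by Theorem \ref{thm:main}.
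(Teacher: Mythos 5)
Your proposal is correct and follows essentially the same route as the paper: trivialize $\ker(\gamma)$ and $\ker(\delta)$ under the Tamagawa/$d_v$ hypothesis, feed this into Theorem \ref{thm:main}, identify $\Ima(\calS_p(E/K_\ac)_{\Gamma_\ac})$ with the universal norms $U\calS_p(E/K)$, and convert between $H^1(\Gamma_\ac, \Sel_p(E/K_\ac)) = 0$ and the absence of non-trivial finite submodules via Proposition \ref{lem:H1=0} in one direction and the finiteness of $X(E/K_\ac)[T_\ac]$ in the other. One small repair: the containment $U\calS_p(E/K) \subseteq \Ima(\calS_p(E/K_\ac)_{\Gamma_\ac})$ should be justified by compactness of the modules $\calS_p(E/K_n)$ (an inverse limit of nonempty compact fibres is nonempty, so the image of $\varprojlim_n \calS_p(E/K_n)$ in $\calS_p(E/K)$ equals the intersection of the images), not by Noetherian stabilization of the decreasing chain of images, which need not hold for $\Z_p$-submodules.
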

	\begin{proof} 
		
		Under our assumptions, we may use Corollary \ref{cor:coinv} to conclude that $\ker(\gamma)$ and $\ker(\delta)$ are trivial. The identity in Theorem \ref{thm:main} then becomes $H^1(\Gamma_\ac, \Sel_p(E/K_\ac)) = [\calS_p(E/K): \Ima(\calS_p(E/K_\ac)_{\Gamma_\ac})]$, and we have seen that this is trivial if and only if $U\calS_p(E/K) = \calS_p(E/K).$
	\end{proof}

	It is natural to ask if $X(E/K_\ac)$ has no non-trivial finite $\Lambda(\Gamma_\ac)$-submodules even if one relaxes the hypotheses in Theorem \ref{thm:Ber}. To this end, we impose the strong Heegner hypothesis $N^{-} = 1$ and recall that the Birch \& Swinnerton-Dyer (BSD) conjecture is equivalent to the identity
	\[[E(K) : \Z y_K ] = (\#\Sha(E/K))^{1/2} \#(\calO_K^\times /\Z^\times)  c_{\text{Manin}}(\varphi)  \prod_{\ell \mid N} c_\ell(E/\Q)\]
	where $c_{\text{Manin}}(\varphi) \in \Z_{> 0}$ is the Manin constant for $\varphi$. It is worth noting that $ \#(\calO_K^\times /\Z^\times) = 1$ under our assumption on $K$ and that $c_{\text{Manin}}(\varphi)$ is conjectured to be $1$. We refer the readers to \cite[(2.2)]{GZ86} for this formulation of the BSD conjecture and \cite{ARS06} for the conjecture regarding the Manin constant. %We consdier the $p$-primary version of the BSD conjecture:
	%\[[E(K) \otimes \Z_p: \Z_p (y_K \otimes 1)] = (\#\Sha(E/K)(p))^{1/2}  u_K c_{\text{Manin}}(\varphi)  \prod_{\ell \mid N} c_\ell(E/\Q)^{(p)},\]
	
	Granting these conjectures, $p$ does not divide $\# \Sha(E/K)$ if and only if \[[E(K) \otimes \Z_p: \Z_p (y_K \otimes 1)] = \prod_{\ell \mid N} c_\ell(E/\Q)^{(p)}.\] Under this condition, we extend the result of Bertolini \cite{Ber01} to cases where $p$ divides the product $\prod_{\ell \mid N} c_\ell(E/\Q) \cdot \prod_{v \mid p} d_v$, or when $\overline{\rho}_{E, p}: G_\Q \rightarrow \Aut(E_p)$ is not surjective.
	
	\begin{thm} \label{cor:main}
		Assume $N^{-} = 1$. Suppose $[E(K) \otimes \Z_p : \Z_p (y_K \otimes 1)] = \prod_{\ell \mid N} c_\ell(E/\Q)^{(p)}$ and that one of the following holds:
		\begin{enumerate}
			\item The prime $p$ is split in $\calO_K$.
			\item The prime $p$ is inert in $\calO_K$ and $d_v^{(p)} = 1$ for the unique prime $v \mid p$ in $K$.
		\end{enumerate}
		Then $H^1(\Gamma_\ac, Sel_p(E/K_\ac)) = 0$ and the module $X(E/K_\ac)$ does not have any non-zero finite submodules.
	\end{thm}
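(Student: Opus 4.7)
The plan is to invoke Theorem \ref{thm:main} together with Proposition \ref{lem:H1=0}. Since $H^1(\Gamma_\ac, \Sel_p(E/K_\ac))$ is the Pontryagin dual of $H_1(\Gamma_\ac, X(E/K_\ac))$, the vanishing of the former is equivalent to the vanishing of the latter, and Proposition \ref{lem:H1=0} then delivers the absence of non-trivial finite submodules of $X(E/K_\ac)$. By Theorem \ref{thm:main}, the vanishing of $H_1$ reduces to the identity
\[
\#\ker(\gamma) \;=\; [\calS_p(E/K) : U\calS_p(E/K)] \cdot \#\ker(\delta),
\]
where $U\calS_p(E/K) := \Ima\bigl(\calS_p(E/K_\ac)_{\Gamma_\ac} \to \calS_p(E/K)\bigr)$, so the task becomes the verification of this numerical equality in each of the two cases.

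Under \ref{good} we have $E_{p^\infty}(K) = 0$, and the hypothesis $[E(K)\otimes\Z_p : \Z_p(y_K \otimes 1)] = \prod_{\ell|N} c_\ell(E/\Q)^{(p)}$ combined with the BSD/Manin identity recalled before the theorem forces $\Sha(E/K)[p^\infty] = 0$; hence $\calS_p(E/K) \cong E(K) \otimes \Z_p$ is free of $\Z_p$-rank one, with index $[\calS_p(E/K) : \Z_p(y_K \otimes 1)] = \prod_{\ell|N} c_\ell(E/\Q)^{(p)}$. Since $N^- = 1$ forces every $\ell \mid N$ to split in $K$, Theorem \ref{thm:gamma} gives
\[
\#\ker(\gamma) = \prod_{\ell | N}\bigl(c_\ell(E/\Q)^{(p)}\bigr)^2 \cdot \prod_{v | p}\bigl(d_v^{(p)}\bigr)^2.
\]
The crucial step is to pin down $U\calS_p(E/K)$ inside $\calS_p(E/K) \simeq \Z_p$ using the norm-compatible Heegner system of Perrin-Riou \cite{P-R87} and Howard \cite{How04b}: under \ref{Heeg-pt} this system generates a free rank-one $\Lambda(\Gamma_\ac)$-submodule $\calH_\ac \subset \calS_p(E/K_\ac)$ with $\Lambda$-torsion cokernel, so $U\calS_p(E/K) = \Z_p \cdot z_0$ where $z_0 = u \cdot (y_K \otimes 1)$ and $u \in \Z_p$ is the explicit Euler factor at $p$ arising from the bottom-level norm relation. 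In case (1), $u = (1 - \alpha_v^{-1})(1 - \alpha_{\vbar}^{-1})$ has $p$-adic valuation $2\,v_p(d_p^{(p)})$, reflecting the two primes above $p$ in $K$; in case (2), the hypothesis $d_v^{(p)} = 1$ is equivalent to the corresponding Euler factor $(1 - \alpha_p^{-2})$ being a $p$-adic unit, so $u \in \Z_p^\times$.

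Assembling these computations yields $[\calS_p(E/K) : U\calS_p(E/K)] = p^{v_p(u)} \cdot \prod_{\ell|N} c_\ell(E/\Q)^{(p)}$, and plugging into the identity of Theorem \ref{thm:main} reduces the theorem to showing that $\#\ker(\delta)$ attains the upper bound of Lemma \ref{lem:diag} — namely $\prod_\ell c_\ell(E/\Q)^{(p)} \cdot \prod_{v|p} d_v^{(p)}$ in case (1) and $\prod_\ell c_\ell(E/\Q)^{(p)}$ in case (2). The main obstacle is establishing this saturation: one must show that every element of the diagonal subgroup of $\ker(\gamma) = \bigoplus_{v \in S} \ker(\gamma_v)$ is realized as the localization of a global class in $H^1(K_S/K, E_{p^\infty})$. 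This can be handled via Poitou--Tate global duality together with the construction of explicit global classes inflated from cohomology over $\Q$, associated with the Tamagawa component groups of $E/\Q$ at the bad primes and, in case (1), with the formal group of $E$ at $p$; the required equalities then follow from a bookkeeping of local invariants at split primes, exploiting the symmetry between $v$ and $\vbar$.
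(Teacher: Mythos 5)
Your framework matches the paper's (reduce via Theorem~\ref{thm:main} and Proposition~\ref{lem:H1=0}), but you miss the decisive observation and as a result open a genuine gap. Theorem~\ref{thm:main} says the order of $H_1(\Gamma_\ac, X(E/K_\ac))$ equals
\[
\frac{\#\ker(\delta)\cdot [\calS_p(E/K):\Ima(\calS_p(E/K_\ac)_{\Gamma_\ac})]}{\#\ker(\gamma)},
\]
which is \emph{a priori} a positive integer. Hence it suffices to show the numerator \emph{divides} $\#\ker(\gamma)$; the equality then comes for free. Perrin-Riou's result \cite[p.~438]{P-R95} gives the containment $\Ima(\calS_p(E/K_\ac)_{\Gamma_\ac}) \supseteq \Z_p \bigl(\prod_{v\mid p} d_v^{(p)}\bigr)(y_K\otimes 1)$, which combined with the hypothesis on $[E(K)\otimes\Z_p : \Z_p (y_K\otimes 1)]$ shows $[\calS_p(E/K):\Ima]$ divides $\prod_\ell c_\ell^{(p)}\cdot\prod_{v\mid p}d_v^{(p)}=(\#\ker(\gamma))^{1/2}$, while Lemma~\ref{lem:diag} gives the same upper bound on $\#\ker(\delta)$. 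Their product divides $\#\ker(\gamma)$, and the argument closes with no saturation needed.

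You instead attempt to prove the identity head-on by computing both factors exactly. This forces you (i) to pin down $\Ima(\calS_p(E/K_\ac)_{\Gamma_\ac})$ precisely via an explicit Euler-factor formula from the bottom-level norm relation, rather than merely bounding its index from above; and (ii) to prove that $\#\ker(\delta)$ attains the upper bound of Lemma~\ref{lem:diag}. Neither is established: the Perrin-Riou input is only a containment, Lemma~\ref{lem:diag} is only a divisibility, and your sketched Poitou--Tate saturation argument for (ii) is not carried out even though it is the crux of your route. As written the proof is incomplete; the paper's argument sidesteps both obstacles because only upper bounds enter.
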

	
	\begin{proof}
		Since $\Sha(E/K)(p)$ is finite (see Remark \ref{rmk:Kol}), we may identify $E(K) \otimes \Z_p$ with $\calS_p(E/K)$. Note that the image of $\calS_p(E/K_\ac)_{\Gamma_\ac}$ in $\calS_p(E/K) = E(K) \otimes \Z_p$ contains $\Z_p \left(\prod_{v \mid p} d_v^{(p)}\right) (y_K \otimes 1)$ \cite[438]{P-R95}. Therefore $[\calS_p(E/K) : \calS_p(E/K_\ac)_{\Gamma_\ac}]$ divides \[\prod_{v \mid p} d_v^{(p)} \cdot [E(K) \otimes \Z_p : \Z_p (y_K \otimes 1)] = \prod_{\ell \mid N} c_\ell(E/\Q)^{(p)} \cdot \prod_{v \mid p} d_v^{(p)}.\] Once again, we note that this is precisely $(\# \ker(\gamma))^{1/2}$. Under our assumptions, $\#\ker(\delta)$ also divides the RHS by Lemma \ref{lem:diag}. It follows from Theorem \ref{thm:main} that $H^1(\Gamma_\ac, Sel_p(E/K_\ac)) = 0$  and the module $X(E/K_\ac)$ does not have any non-zero finite submodules.
	\end{proof}
	% \section{Pseudo-null submodules}
	
	We illustrate our results with numerical examples which fall under the hypotheses of Theorem \ref{cor:main} where Theorem \ref{thm:Ber01} does not apply. 
	
	\begin{ex} \label{ex:indef}
		Consider the elliptic curve $E/\Q$ with Cremona label $11a1$ and let $K = \Q(\sqrt{-19}), p = 5$. In this example, $5$ is split in $\calO_K$ and $[E(K) \otimes \Z_5 : \Z_5 (y_K \otimes 1)] =  c_{11}^{(5)} = 5.$ The Heegner point $y_K$ has trivial index in the free part of $E(K)$ and $[E(K) \otimes \Z_5: \Z_5(y_K \otimes 1)]$ is contributed by the $5$-torsion points $E_5(K).$		
		Other important properties of this example are the residual representation $\overline{\rho}_{E, 5}: G_\Q \rightarrow \Aut(E_5)$ is not surjective and $d_v^{(5)} = 5$ for each prime $v \mid 5$.
	\end{ex}
	
	\begin{ex} \label{ex:indef-2}
		Consider the elliptic curve $E/\Q$ with Cremona label $326b1$ and let $K = \Q(\sqrt{-31}), p = 5$. In this example, $5$ is also split in $\calO_K$ and $[E(K) \otimes \Z_5 : \Z_5 (y_K \otimes 1)] =  \prod_{\ell \mid N} c_{\ell}^{(5)} = 5.$ In contrast with the previous example, $E(K)$ has trivial torsion and the Heegner point $y_K$ has index $5$ in $E(K)$.
		The residual representation $\overline{\rho}_{E, 5}: G_\Q \rightarrow \Aut(E_5)$ is surjective, and $d_v^{(5)} = 1$ for each prime $v \mid 5$.
	\end{ex}
	
	We end this section by remarking that in the case where \[p \nmid \prod_{v \mid p} d_v \cdot \prod_{v \mid N} c_v(E/K)\] and $y(K) \notin pE(K)$, Matar and Nekov\'a\v r recently showed that $\Sel_p(E/K_\ac)$ is in fact cofree if we additionally assume that $E_p$ is an irreducible $G_\Q$-module \cite{MN19}. 
	
	\section{The vanishing of the $\mu$-invariant} \label{sec:mu=0}
	
	\subsection{The definite setting}
	We discuss some results in the literature regarding the vanishing of the $\mu$-invariant over the anticyclotomic extension. In the definite setting, we have the following result due to Pollack-Weston.
	
	\begin{thm} \label{PW11} \cite{PW11} Assume $\overline{\rho}_{E,p}: G_\Q \rightarrow \Aut(E_p)$ satisfies the following conditions:
		\begin{enumerate}
			\item $\overline{\rho}_{E,p}$ is surjective,
			\item if $q \mid N^{-}$ and $q \equiv \pm 1 \pmod{p},$ then $\overline{\rho}_{E,p}$ is ramified at $q$.
		\end{enumerate}
		Then $\mu(\Sel_p(E/K_\ac)) = 0$.
	\end{thm}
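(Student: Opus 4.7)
The plan is to reduce this algebraic statement to an analytic non-vanishing statement. By the one-sided divisibility in the anticyclotomic Iwasawa Main Conjecture (established in the definite case by Bertolini-Darmon \cite{BD05}), the characteristic ideal of $X(E/K_\ac)$ divides the ideal generated by a suitable anticyclotomic $p$-adic $L$-function $\calL_p(E/K) \in \Lambda(\Gamma_\ac)$; since $\mu$-invariants are monotone under divisibility in the one-variable Iwasawa algebra $\Lambda(\Gamma_\ac) \simeq \Z_p\llbracket T \rrbracket$, it would suffice to prove $\mu(\calL_p(E/K)) = 0$. In the definite setting, $\calL_p(E/K)$ is realized as an inverse limit of theta elements $\theta_n \in \Z_p[\Gal(K_n/K)]$ constructed via the Jacquet-Langlands transfer of $E$ to a definite quaternion algebra $B/\Q$ of discriminant $N^{-}$, and vanishing of $\mu(\calL_p(E/K))$ amounts to the non-vanishing of $\overline{\theta}_n$ modulo $p$ for all sufficiently large $n$.

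The main engine will be a Jochnowitz-style level-raising / congruence argument combined with Vatsal's equidistribution theorem for CM points on the Shimura set attached to $B$. First, I would use Ihara's lemma on the quaternionic side to transfer the mod $p$ theta element at level $N$ to one at the minimal level supporting the residual representation $\overline{\rho}_{E,p}$. This is precisely where hypothesis (ii) is needed: Ihara's lemma at a prime $q \mid N^{-}$ requires either $q \not\equiv \pm 1 \pmod p$ or ramification of $\overline{\rho}_{E,p}$ at $q$. Hypothesis (i) then guarantees that the residual representation is large enough for the mod $p$ Hecke algebra at the minimal level to behave multiplicity-one, so that the transferred theta element is well-defined up to a $p$-adic unit.

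At the minimal level, Vatsal's equidistribution theorem asserts that the $\Gal(K_n/K)$-orbit of a fixed Gross point becomes equidistributed on the definite Shimura set as $n \to \infty$. Combined with surjectivity of $\overline{\rho}_{E,p}$, this forces the reduced theta element at the minimal level to have non-zero support modulo $p$ for $n \gg 0$; tracing back through the level-raising identification yields $\overline{\theta}_n \neq 0$ at level $N$, hence $\mu(\calL_p(E/K)) = 0$. The hard part will be the level-raising bookkeeping: one must carefully match theta elements across levels modulo $p$, and it is precisely this matching that forces hypothesis (ii) via Ihara's lemma. Everything else — the transfer between algebraic and analytic $\mu$-invariants and the equidistribution input at minimal level — is standard once one is in the definite framework.
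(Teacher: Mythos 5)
The paper does not actually prove this theorem: it is cited directly from Pollack--Weston \cite{PW11}, and the only thing the paper says about its proof is the one-sentence remark following the statement, namely that it ``uses the vanishing of the $\mu$-invariant of the $p$-adic $L$-function and the Iwasawa main conjecture.'' Your proposal is a faithful unpacking of exactly that strategy, and it correctly identifies the ingredients of the Pollack--Weston argument: reduce the algebraic $\mu$-invariant to the analytic one via the Euler-system divisibility of \cite{BD05}; realise the analytic $\mu$-invariant through theta elements on a definite quaternion algebra of discriminant $N^{-}$; run a Jochnowitz/Vatsal-style level-raising congruence using Ihara's lemma to descend to minimal level; and finish with Vatsal's equidistribution theorem for Gross points. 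You also pin down where each hypothesis enters: surjectivity of $\overline{\rho}_{E,p}$ gives irreducibility and multiplicity one for the residual quaternionic Hecke module, while the ramification condition at $q \mid N^{-}$ with $q \equiv \pm 1 \pmod p$ is precisely the ``CR''-type condition that makes Ihara's lemma and the mod-$p$ level matching work. One small point to tighten in a full write-up: the Bertolini--Darmon divisibility $\mathrm{char}\, X(E/K_\ac) \mid (\calL_p(E/K))$ is itself established under the running hypothesis that the analytic $\mu$-invariant vanishes, so the logical order must be to establish $\mu(\calL_p(E/K)) = 0$ first and only then invoke the divisibility to conclude $\mu(X(E/K_\ac)) = 0$; your prose presents the divisibility as the opening reduction, which reads as circular even though the intended argument is not.
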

	
	The proof of this result uses the vanishing of the $\mu$-invariant of the $p$-adic $L$-function and the Iwasawa main conjecture.

	Recall that for a prime $v$ in $K$, we denote by $\widetilde{E}_v$ the reduction of $E$ modulo $v$. Define
	\[\widetilde{J_v}^1(K_\ac, E_p) = \begin{cases}
		\bigoplus_{w \mid v} H^1(K_{\ac, w}, (\widetilde{E}_v)_p) & \text{ if } v \mid p, \\
		\bigoplus_{w \mid v} H^1(K_{\ac, w}, E_p) & \text{ if } v \nmid p.
	\end{cases}\]
	Working with Galois cohomology of the residual representation $E_p$, we have the following statement in a previous work of the authors \cite{NS25}.
	
	\begin{thm} \label{thm:mu=0-def}
		Suppose that the Selmer group $\Sel_p(E/K_\ac)$ is cotorsion over $\Lambda(\Gamma_\ac)$ and the following hold:
		\begin{enumerate}
			\item $\overline{\rho}_{E,p}$ is surjective,
			\item $E(K_v)_p = 0$ for every prime $v$ lying above an inert prime in $\Q$.
		\end{enumerate}
		
		Then $\mu(\Sel_p(E/K_\ac)) = 0$ if and only if $H^2(K_S/K_\ac, E_p) = 0$ and the following map is surjective
		\[H^1(K_S/K_\ac, E_p) \rightarrow \bigoplus_{v \in S} \widetilde{J_v}^1(K_\ac, E_p).\]
	\end{thm}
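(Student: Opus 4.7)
The plan is to characterize $\mu(\Sel_p(E/K_\ac)) = 0$ through the finiteness of a residual Selmer group associated to the mod-$p$ representation $E_p$, following the approach originally developed by Greenberg in the cyclotomic setting.

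First I would reduce to a mod-$p$ statement. Since $X(E/K_\ac)$ is a finitely generated torsion $\Lambda(\Gamma_\ac) \cong \Z_p\llbracket T \rrbracket$-module (by the cotorsion assumption), the structure theorem gives $\mu(X(E/K_\ac)) = 0$ if and only if $X(E/K_\ac)/pX(E/K_\ac)$ is finite over $\F_p\llbracket T \rrbracket$, or equivalently, $\Sel_p(E/K_\ac)[p]$ is a finite group. Next I would define the residual Selmer group $\Sel(E_p/K_\ac) := \ker\bigl(H^1(K_S/K_\ac, E_p) \to \bigoplus_{v \in S} \widetilde{J_v}^1(K_\ac, E_p)\bigr)$ and compare it with $\Sel_p(E/K_\ac)[p]$. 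The surjectivity of $\overline{\rho}_{E,p}$ forces $E_{p^\infty}(K_\ac) = 0$, so the Kummer sequence $0 \to E_p \to E_{p^\infty} \xrightarrow{p} E_{p^\infty} \to 0$ yields the isomorphism $H^1(K_S/K_\ac, E_p) \cong H^1(K_S/K_\ac, E_{p^\infty})[p]$. For the local conditions at each $v \in S$, the plan is a case split: at split primes $v \nmid p$ the discrepancy is controlled by $E(K_{\ac,w})/p$, which is finite; at inert primes hypothesis $(2)$ gives $E(K_{\ac,w})_p = 0$ and hence an isomorphism (since $K_{\ac,w}/K_v$ is pro-$p$); at primes $v \mid p$, Greenberg's description of the local Selmer condition via the reduction $(\widetilde{E}_v)_p$ identifies the two formulations up to finite error. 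A snake lemma diagram then shows $\Sel_p(E/K_\ac)[p]$ is finite if and only if $\Sel(E_p/K_\ac)$ is finite.

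It then remains to show $\Sel(E_p/K_\ac)$ is finite if and only if both $H^2(K_S/K_\ac, E_p) = 0$ and the displayed map is surjective. My plan here is to invoke the Poitou-Tate nine-term global duality sequence at the finite layers $K_n \subset K_\ac$ and pass to the limit: this extends the defining sequence of $\Sel(E_p/K_\ac)$ past $\bigoplus_{v \in S} \widetilde{J_v}^1(K_\ac, E_p)$ through a dual global Selmer group and terminates in $H^2(K_S/K_\ac, E_p)$. Taking $\F_p\llbracket T \rrbracket$-coranks, the global Euler-Poincar\'e characteristic formula shows the corank of $\Sel(E_p/K_\ac)$ vanishes precisely when both the cokernel of the local map (dual to a factor in the sequence) and $H^2(K_S/K_\ac, E_p)$ vanish; since for $\F_p\llbracket T \rrbracket$-modules corank zero is equivalent to finiteness, this yields the desired equivalence.

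The main obstacle I anticipate is the delicate Euler-Poincar\'e accounting at primes $v \mid p$, where the ramification of $K_\ac/K$ interacts non-trivially with the Greenberg local conditions on $E_p$, together with the technical care needed to pass from the Poitou-Tate sequences at the finite layers $K_n$ to a useful exact sequence at the infinite level $K_\ac$.
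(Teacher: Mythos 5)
First, note that this paper does not actually prove Theorem \ref{thm:mu=0-def}: it is quoted from the authors' earlier work \cite{NS25}, so your proposal can only be measured against the expected argument. Your overall architecture is the natural one and surely close in spirit to that source: reduce $\mu(\Sel_p(E/K_\ac))=0$ to finiteness of $\Sel_p(E/K_\ac)[p]$, compare with the residual Selmer group $\Sel(E_p/K_\ac)$ using hypothesis (1) (which kills $E_{p^\infty}(K_\ac)$, giving $H^1(K_S/K_\ac,E_p)\simeq H^1(K_S/K_\ac,E_{p^\infty})[p]$) and hypothesis (2) at the primes above inert primes, and then control the residual Selmer group by Euler--Poincar\'e/Poitou--Tate considerations over $\Omega(\Gamma_\ac)$. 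Two remarks on the bookkeeping: the primes above inert primes split completely in $K_\ac/K$, so hypothesis (2) is needed not only for your local comparison but, more importantly, to force $\widetilde{J_v}^1(K_\ac,E_p)=0$ there (via $\dim_{\F_p}H^1(K_v,E_p)=2\dim_{\F_p}E(K_v)_p$); otherwise these infinitely decomposed places contribute positive $\Omega(\Gamma_\ac)$-corank and your corank identity does not close. You flag only the primes above $p$ as the delicate case.

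The genuine gap is in the ``only if'' direction. Your corank computation can only show that if $\Sel(E_p/K_\ac)$ is finite then $H^2(K_S/K_\ac,E_p)$ and the cokernel of the map $H^1(K_S/K_\ac,E_p)\rightarrow\bigoplus_{v\in S}\widetilde{J_v}^1(K_\ac,E_p)$ both have $\Omega(\Gamma_\ac)$-corank zero, i.e.\ are finite; the theorem asserts that $H^2$ vanishes and the map is surjective, and your closing sentence silently identifies ``finite'' with ``zero''. For $H^2$ the upgrade is a standard but necessary lemma: since $H^3(K_S/K_n,E_p)=0$ for odd $p$, Hochschild--Serre gives $H^1(\Gamma_n,H^2(K_S/K_\ac,E_p))=0$ for all $n$, so the Pontryagin dual of $H^2(K_S/K_\ac,E_p)$ is a torsion-free $\Omega(\Gamma_\ac)$-module and hence finite implies zero; you should state and use this. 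For the cokernel of the localization map no such mechanism is offered, and none is automatic: a finite cokernel is a priori possible (it is controlled, e.g., by quotients such as $\Sel_p(E/K_\ac)/p$, whose vanishing is tied to the absence of finite submodules rather than to $\mu=0$ alone), so converting ``finite cokernel'' into actual surjectivity requires an additional structural argument (for instance identifying the cokernel via Poitou--Tate with the dual of a compact Selmer-type module and proving it has no nonzero finite quotient, or a careful mod-$p$ versus $p^\infty$ comparison exploiting the specific local conditions $\widetilde{J_v}^1$). As it stands, your plan proves the ``if'' direction and only a weakened form of the ``only if'' direction.
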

	
	The condition $H^2(K_S/K_\ac, E_p) = 0$ is equivalent to the vanishing of the $\mu$-invariant of the dual fine Selmer group \cite[Theorem 3.5]{NS25}. For the cyclotomic extension $K_\cyc$, this vanishing condition is known as Conjecture A in the literature. We refer readers to \cite{CS05} where this conjecture was first proposed by John Coates and the second author.

	\subsection{The indefinite setting}
	
	In the indefinite setting, the following result is due to Hatley-Lei \cite{HL21}.
	\begin{thm} \cite{HL21} Assume the following conditions are satisfied:
		\begin{enumerate}
			\item $p \nmid 6 N \phi(N) \# \Pic(O_K)$,
			\item $a_p(E) ^2 \neq 1 \pmod{p}$,
			\item $\overline{\rho}_{E,p}: G_\Q \rightarrow \Aut(E_p)$ is absolutely irreducible.
		\end{enumerate}
		Then $\mu(\Sel_p(E/K_\ac)) = 0$.
	\end{thm}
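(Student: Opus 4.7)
The plan is to reduce the vanishing of $\mu(\Sel_p(E/K_\ac))$ to a statement about the residual representation $E_p$, in the spirit of Theorem \ref{thm:mu=0-def}. The snake lemma applied to multiplication by $p$ on the short exact sequence $0 \to E_p \to E_{p^\infty} \xrightarrow{p} E_{p^\infty} \to 0$ relates the residual Selmer group $\Sel(E_p/K_\ac)$ to $\Sel_p(E/K_\ac)[p]$ and $\Sel_p(E/K_\ac)/p$; the Nakayama lemma for the compact Iwasawa algebra then translates $\mu = 0$ into the residual Selmer group being cofinitely generated over $\F_p$ of the expected corank (which is $1$ in the indefinite setting, thanks to Howard's rank computation).

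First I would verify $E_p(K_\ac) = 0$: absolute irreducibility of $\overline{\rho}_{E,p}$, combined with condition (1) which forbids the small auxiliary primes from interfering, prevents any abelian pro-$p$ quotient of $G_K$ from stabilizing $E_p$. This makes the snake lemma identification clean. I would then inspect the local terms. For $v \nmid p$, condition (1) together with $p \nmid \phi(N)$ forces $H^0$ of the residual representation at the relevant decomposition groups and all local Tamagawa factors to vanish modulo $p$. For $v \mid p$, the ordinary filtration $0 \to \widehat{E}_p \to E_p \to (\widetilde{E}_v)_p \to 0$ combined with hypothesis (2) ensures the two Frobenius eigenvalues on $E_p$ are distinct modulo $p$ and neither equals $1$; in particular $d_v^{(p)} = 1$ and the residual local cohomology splits cleanly, yielding surjectivity of the residual global-to-local map at primes above $p$.

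The main obstacle will be establishing the vanishing of $H^2(K_S/K_\ac, E_p)$. Unlike the definite case of Theorem \ref{thm:mu=0-def}, one cannot deduce this from cotorsion of $\Sel_p(E/K_\ac)$, since in the indefinite setting this module has positive $\Lambda(\Gamma_\ac)$-corank. Instead, one invokes the non-triviality of the mod-$p$ Heegner system: Cornut--Vatsal non-triviality of $\calH_\ac$ together with absolute irreducibility (to rule out the classes landing in a non-trivial sub-representation) furnishes a non-zero class in $\Sel(E_p/K_\ac)$. This input, combined with a global Euler characteristic computation that uses condition (2) to control the contributions at $p$, forces the vanishing of $H^2(K_S/K_\ac, E_p)$.

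With both $H^2(K_S/K_\ac, E_p) = 0$ and surjectivity of the residual localization established, the mod-$p$ analogue of the characterization in Theorem \ref{thm:mu=0-def} applies and delivers $\mu(\Sel_p(E/K_\ac)) = 0$. The delicate point, and the step most worth scrutinizing carefully, is the joint verification of the weak Leopoldt statement modulo $p$ and the residual surjectivity, since these two conditions must be balanced against each other via Poitou--Tate and cannot be decoupled in the indefinite setting where no cotorsion input is available a priori.
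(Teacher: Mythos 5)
The paper does not prove this theorem; it states it as a cited result of Hatley--Lei \cite{HL21} and immediately explains that their proof relies on the anticyclotomic Iwasawa main conjecture together with the vanishing of the $\mu$-invariant of the BDP $p$-adic $L$-function, established analytically by Hsieh \cite{Hsi14} and Burungale \cite{Bur17}. Your proposal instead attempts a purely algebraic proof via residual Selmer groups, which is a genuinely different route.

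The critical gap is the step you yourself flag as delicate: establishing $H^2(K_S/K_\ac, E_p) = 0$. You claim this follows from Cornut--Vatsal non-triviality of the Heegner system combined with a global Euler characteristic computation using condition (2). This does not work. The vanishing of $H^2(K_S/K_\ac, E_p)$ is precisely Conjecture A, which the paper explicitly treats as an \emph{unproved hypothesis} in the indefinite setting (Theorem \ref{thm:mu=0-indef}, condition (3)). The Euler--Poincaré characteristic formula gives an alternating sum of coranks; a single non-zero Heegner class furnishes only a lower bound on $\corank\, H^1$, not the exact value, and so one cannot extract $H^2 = 0$ from it. Moreover, in the indefinite setting $\Sel_p(E/K_\ac)$ is \emph{not} $\Lambda(\Gamma_\ac)$-cotorsion, so the Hachimori--Venjakob-type argument that delivers weak Leopoldt in the definite case is simply unavailable. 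Note that the paper's own algebraic alternative to \cite{HL21}, namely Theorem \ref{thm:mu=0-main}, deliberately avoids invoking Conjecture A altogether; it works under a different hypothesis (that the Heegner point $y_K$ has infinite order in $E(K)$) and compares the $\Lambda(\Gamma_\ac)$-rank of $X(E/K_\ac)$ with the $\Omega(\Gamma_\ac)$-corank of the residual Selmer group via the control theorem. Your proposal would be much stronger if it took that route rather than attempting to establish $H^2 = 0$ directly.
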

	
	In the same vein as Theorem \ref{PW11}, the proof of this result also uses the Iwasawa main conjecture and the vanishing of the $\mu$-invariant of the BDP $p$-adic $L$-function established by Hsieh \cite{Hsi14} and later by Burungale \cite{Bur17} using different methods. We prove the vanishing of the $\mu$-invariant without assuming the Iwasawa main conjecture in Theorem \ref{thm:mu=0-main}.
	
	An important hypothesis in the aforementioned results is the irreducibility of $\overline{\rho}_{E,p}: G_\Q \rightarrow \Aut(E_p)$. It is not clear whether one should expect the $\mu$-invariant to vanish when $\overline{\rho}_{E,p}: G_\Q \rightarrow \Aut(E_p)$ is reducible. The necessity for this condition is suggested in the Introduction of \cite{Hsi14}. For the cyclotomic extension over $\Q$, it is known that the $\mu$-invariant can be positive when $\overline{\rho}_{E,p}: G_\Q \rightarrow \Aut(E_p)$ is reducible \cite[p.2]{GV00}.
	
	Recall the definition of the $p$-Selmer group $\Sel(E_p/L)$ in \ref{sec:prelim}, and let
	\[\Sel(E_p/K_\ac) := \varinjlim_{L} \Sel(E_p/L)\]
	where the limit is over the subextensions $L$ of $K_\ac$ that are finite over $K$. We also denote by $\Omega(\Gamma_\ac)$ the residual Iwasawa algebra $\Lambda(\Gamma_\ac)/p\Lambda(\Gamma_\ac)$, which is isomorphic to $\F_p \llbracket T \rrbracket$.
	
	Using the same techniques as Theorem \ref{thm:mu=0-def}, the vanishing of the $\mu$-invariant can also be phrased in terms of the residual representation $E_p$.

	\begin{thm} \label{thm:mu=0-indef}
		Suppose that the following hold:
		\begin{enumerate}
			\item $\overline{\rho}_{E,p}:  G_\Q \rightarrow \Aut(E_p)$ is surjective,
			\item $E(K_v)_p = 0$ for every prime $v$ lying above an inert prime in $\Q$,
			\item Conjecture A holds, i.e. the cohomology $H^2(K_S/K_\ac, E_p)$ vanishes.
		\end{enumerate}
		Then $\mu(\Sel_p(E/K_\ac)) = 0$ if and only if  and the cokernel of following map has $\Omega(\Gamma_\ac)$-corank $1$:
		\[H^1(K_S/K_\ac, E_p) \rightarrow \bigoplus_{v \in S} \widetilde{J_v}^1(K_\ac, E_p).\]
	\end{thm}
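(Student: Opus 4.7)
The plan is to follow the strategy of Theorem \ref{thm:mu=0-def}, adapted to the fact that $X(E/K_\ac)$ has $\Lambda(\Gamma_\ac)$-rank $1$ rather than $0$ in the indefinite setting (by Howard's theorem; see Remark \ref{rmk:Kol}). The key numerical dictionary is: for any finitely generated $\Lambda(\Gamma_\ac)$-module $X$ of $\Lambda$-rank $r$, one has
\[\rk_{\Omega(\Gamma_\ac)}(X/pX) = r + s,\]
where $s$ is the number of elementary divisors of $X$ of the form $\Lambda/p^a\Lambda$. Thus $\mu(X) = 0$ iff $\rk_\Omega(X/pX) = r$. Taking $r = 1$ and dualizing, $\mu(\Sel_p(E/K_\ac)) = 0$ is equivalent to $\corank_\Omega \bigl(\Sel_p(E/K_\ac)[p]\bigr) = 1$.

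Next, I would identify $\Sel_p(E/K_\ac)[p]$ with the residual Selmer group $\Sel(E_p/K_\ac)$. Hypothesis (1) implies $E_{p^\infty}(K_\ac) = 0$ (the image of $G_{K_\ac}$ in $\mathrm{GL}_2(\F_p)$ contains $\mathrm{SL}_2(\F_p)$, which has no nonzero fixed vector), and the Kummer sequence $0 \to E_p \to E_{p^\infty} \xrightarrow{p} E_{p^\infty} \to 0$ then yields an injection $H^1(K_S/K_\ac, E_p) \hookrightarrow H^1(K_S/K_\ac, E_{p^\infty})[p]$. A local-global diagram chase---using hypothesis (2) to control primes above inert rationals, and the $p$-ordinary filtration $0 \to \widehat{E}_v \to E \to \widetilde{E}_v \to 0$ at primes above $p$---then shows that the natural map $\Sel(E_p/K_\ac) \to \Sel_p(E/K_\ac)[p]$ is an isomorphism up to terms of $\Omega$-corank $0$, just as in the definite setting treated in \cite{NS25}. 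Hence $\mu(\Sel_p(E/K_\ac)) = 0$ iff $\corank_\Omega \Sel(E_p/K_\ac) = 1$.

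Finally, hypothesis (3) (Conjecture A) provides the four-term exact sequence
\[0 \to \Sel(E_p/K_\ac) \to H^1(K_S/K_\ac, E_p) \to \bigoplus_{v \in S} \widetilde{J_v}^1(K_\ac, E_p) \to C \to 0\]
with $C$ the cokernel appearing in the statement. Additivity of $\Omega(\Gamma_\ac)$-corank, together with the Euler-characteristic balance $\corank_\Omega H^1(K_S/K_\ac, E_p) = \corank_\Omega \bigoplus_{v \in S} \widetilde{J_v}^1(K_\ac, E_p)$ (the same global Euler-characteristic computation over $\Gamma_\ac$ already used in Theorem \ref{thm:mu=0-def}), gives $\corank_\Omega \Sel(E_p/K_\ac) = \corank_\Omega C$. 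Chaining the three steps yields the claimed equivalence.

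The main obstacle is the second step: verifying prime by prime that the comparison between the residual and $p$-primary local conditions has kernel and cokernel of $\Omega$-corank $0$. This is precisely where hypothesis (2) is essential at primes above inert rationals, and where the ordinary filtration is indispensable at primes above $p$; once this step is secured, the remaining Euler-characteristic accounting is clean $\Omega$-corank bookkeeping.
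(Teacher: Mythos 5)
Your proposal is correct and follows essentially the route the paper itself intends (the paper offers no separate argument, deferring to the techniques of Theorem \ref{thm:mu=0-def} from \cite{NS25}): the structure-theorem dictionary $\rk_{\Omega}(X/pX)=\rk_{\Lambda}X+\#\{p\text{-power divisors}\}$ applied with $\rk_{\Lambda(\Gamma_\ac)}X(E/K_\ac)=1$, the identification of $\Sel(E_p/K_\ac)$ with $\Sel_p(E/K_\ac)[p]$ under hypotheses (1)--(2), and the $\Omega(\Gamma_\ac)$-corank bookkeeping in the four-term sequence. One small correction of emphasis: Conjecture A is not what yields the four-term exact sequence (that is tautological once $C$ is defined as the cokernel); it is exactly what gives $\corank_{\Omega(\Gamma_\ac)}H^1(K_S/K_\ac,E_p)=\corank_{\Omega(\Gamma_\ac)}\bigoplus_{v\in S}\widetilde{J_v}^1(K_\ac,E_p)=2$, i.e.\ it enters precisely in your Euler-characteristic balance.
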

	
	Note that the second condition in Theorem \ref{thm:mu=0-indef} is redundant under the strong Heegner hypothesis, namely that every prime $\ell$ dividing the conductor $N$ of $E$ is split in $K/\Q$. 
	
	If we assume that the Heegner point $y_K \in E(K)$ has infinite order as in Section \ref{sec:indef}, we are able to provide conditions under which the $\mu$-invariant vanishes without Conjecture A using purely algebraic arguments.

	\begin{thm} \label{thm:mu=0-main} 
		Suppose that the Heegner point $y_K$ has infinite order and the following hypotheses hold:
		\begin{enumerate}
			\item $\overline{\rho}_{E,p}:  G_\Q \rightarrow \Aut(E_p)$ is surjective,
			\item $p \nmid \prod_{v \mid p} d_v \cdot \prod_{\ell \mid N^{+}} c_\ell(E/\Q).$
		\end{enumerate}
		Then the $\mu$-invariant of $\Sel_p(E/K_\ac)$ is trivial.
	\end{thm}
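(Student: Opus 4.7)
The plan is to give a direct algebraic proof of $\mu(X(E/K_\ac)) = 0$, bypassing both the Iwasawa main conjecture and Conjecture A (as emphasized in the paragraph preceding the statement) by exploiting Howard's rank-one theorem together with a residual control theorem.

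First, I translate the statement into an $\Omega(\Gamma_\ac)$-corank condition. By Howard's theorem, the hypothesis that $y_K$ has infinite order forces $X(E/K_\ac)$ to have $\Lambda(\Gamma_\ac)$-rank exactly $1$. Consequently $\mu(X(E/K_\ac)) = 0$ holds if and only if $X(E/K_\ac)/p X(E/K_\ac)$ has $\Omega(\Gamma_\ac)$-rank equal to $1$, or dually, $\Sel_p(E/K_\ac)[p]$ has $\Omega(\Gamma_\ac)$-corank equal to $1$. The surjectivity hypothesis (1) yields $E_{p^\infty}(K_\ac) = 0$ (as in the proof of Theorem \ref{thm:coinv}), which permits a clean comparison of $\Sel_p(E/K_\ac)[p]$ with the residual Selmer group $\Sel(E_p/K_\ac)$ via the long exact sequence induced by $0 \to E_p \to E_{p^\infty} \xrightarrow{p} E_{p^\infty} \to 0$.

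Second, I analyze a residual control diagram analogous to \eqref{diag:fundamental}, with $E_p$ in place of $E_{p^\infty}$. Under hypotheses (1) and (2), the kernels and cokernels of the local restriction maps at primes above $p$ and at primes dividing $N^{+}$ are either trivial or of order coprime to $p$, exactly as in the analysis of Theorem \ref{thm:gamma} and Corollary \ref{cor:coinv}. For inert primes $\ell \mid N^{-}$, complex conjugation acts by $-1$ on $\Gamma_\ac$, forcing Frobenius at such primes to be trivial in $\Gamma_\ac$; consequently $\ell$ splits infinitely in $K_\ac/K$ and the corresponding local cohomology $\widetilde{J}_\ell^1(K_\ac, E_p)$ has only finite $\Omega(\Gamma_\ac)$-corank that can be absorbed as finite error. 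The net effect is $\corank_{\Omega(\Gamma_\ac)} \Sel(E_p/K_\ac) = \corank_{\Omega(\Gamma_\ac)} \Sel_p(E/K_\ac)[p]$.

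Third, I compute $\corank_{\Omega(\Gamma_\ac)} \Sel(E_p/K_\ac)$ via Nakayama's lemma applied to the Pontryagin dual: the $\Omega(\Gamma_\ac)$-rank is bounded by $\dim_{\F_p} \Sel(E_p/K_\ac)^{\Gamma_\ac}$. Under hypotheses (1) and (2), a residual control theorem (the analog of Corollary \ref{cor:coinv}) matches this up to finite error with $\dim_{\F_p} \Sel(E_p/K)$; the structural decomposition $\Sel_p(E/K) \cong \Q_p/\Z_p \oplus \Sha(E/K)_{p^\infty}$ (which uses $\corank_{\Z_p} \Sel_p(E/K) = 1$ from Remark \ref{rmk:Kol} and $E_p(K) = 0$) together with the observation that any $p$-torsion from $\Sha$ contributes only $\Omega(\Gamma_\ac)$-cotorsion in the inverse limit yields the upper bound of $\Omega(\Gamma_\ac)$-rank $1$. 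Combined with the lower bound $\geq 1$ from Howard's rank-one result via the comparison in step two, this establishes $\mu(X(E/K_\ac)) = 0$. The principal obstacle is the careful handling of local contributions at inert primes $\ell \mid N^{-}$, where hypothesis (2) does not constrain $c_\ell(E/\Q)$ and $p$ may well divide the Tamagawa factor; the resolution leverages the infinite splitting of such primes in the anticyclotomic tower, a phenomenon with no cyclotomic analogue, to ensure their local $\mu$-contribution is automatically trivial.
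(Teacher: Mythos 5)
Your proposal follows the same skeleton as the paper's proof (lower bound on $\corank_{\Omega(\Gamma_\ac)}$ of $\Sel_p(E/K_\ac)[p]$ from Howard's rank-one theorem, upper bound from a residual control theorem plus Nakayama, and the identification of $\Sel(E_p/K_\ac)$ with $\Sel_p(E/K_\ac)[p]$ using $E_{p^\infty}(K_\ac)=0$), but your third step has a genuine gap exactly at the crucial upper bound. Nakayama applied to the Pontryagin dual of $\Sel(E_p/K_\ac)$ bounds its $\Omega(\Gamma_\ac)$-corank by $\dim_{\F_p}\Sel(E_p/K_\ac)^{\Gamma_\ac}$, and after the residual control theorem this is governed by $\dim_{\F_p}\Sel(E_p/K)$ — an exact $\F_p$-dimension, not a corank, so "up to finite error" buys nothing here (every group in sight is finite). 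Your structural input $\Sel_p(E/K)\simeq \Q_p/\Z_p\oplus\Sha(E/K)_{p^\infty}$ only gives $\dim_{\F_p}\Sel(E_p/K)=1+\dim_{\F_p}\Sha(E/K)[p]$, and the assertion that "$p$-torsion from $\Sha$ contributes only $\Omega(\Gamma_\ac)$-cotorsion" is precisely the point at issue: a positive $\mu$-invariant of $X(E/K_\ac)$ would manifest itself as an extra $\Omega(\Gamma_\ac)$-cofree summand of $\Sel_p(E/K_\ac)[p]$ whose $\Gamma_\ac$-invariants are finite and nonzero at the bottom layer, so finiteness over $K$ cannot be upgraded to cotorsion over $K_\ac$ without further input. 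The paper closes this by invoking Kolyvagin's theorem in its refined form, $\dim_{\F_p}\Sel(E_p/K)=1$ (equivalently $\Sha(E/K)[p]=0$, given rank one and $E_p(K)=0$), after which Nakayama yields $\corank_{\Omega(\Gamma_\ac)}\Sel(E_p/K_\ac)\le 1$ on the nose; as written, your argument only yields the bound $1+\dim_{\F_p}\Sha(E/K)[p]$, which is insufficient to conclude $\mu=0$.

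A secondary inaccuracy concerns the inert primes $\ell\mid N^{-}$. Your dihedral/Frobenius argument correctly shows such primes split completely in $K_\ac/K$, but precisely for that reason $\widetilde{J_\ell}^1(K_\ac,E_p)$ is a coinduced module whose Pontryagin dual is $\Omega(\Gamma_\ac)$-free of rank $\dim_{\F_p}H^1(K_v,E_p)=2\dim_{\F_p}E(K_v)_p$ for each $v\mid\ell$; it is not a negligible "finite error" and has positive $\Omega(\Gamma_\ac)$-corank whenever $E(K_v)_p\neq 0$ — this is exactly why Theorems \ref{thm:mu=0-def} and \ref{thm:mu=0-indef} carry the hypothesis $E(K_v)_p=0$ at inert primes, a hypothesis absent from the present theorem. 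Fortunately no such analysis is needed for the conclusion you want in your second step: the groups $\Sel(E_p/K_\ac)$ and $\Sel_p(E/K_\ac)[p]$ are cut out by the same local conditions (vanishing in $H^1(K_{\ac,w},E)(p)$), so once $E_{p^\infty}(K_\ac)=0$ follows from the surjectivity of $\overline{\rho}_{E,p}$, the comparison map is an isomorphism — this is how the paper argues, via the isomorphism $H^1(K_S/K_\ac,E_p)\simeq H^1(K_S/K_\ac,E_{p^\infty})_p$ of \cite[Theorem 4.9]{NS25} — and the complete splitting of inert primes is only needed (harmlessly) in the residual control theorem, where the local restriction maps at such primes are identities, so that no Tamagawa condition at $\ell\mid N^{-}$ is required.
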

	
	\begin{proof}
		By the result of Kolyvagin \cite{Kol90}, we have $\dim_{\F_p} \Sel(E_p/K) = 1$. Analogous to Corollary \ref{cor:isom}, the control morphism
		\[\Sel(E_p/K) \rightarrow \Sel(E_p/K_\ac)^{\Gamma_\ac}\]
		
		is an isomorphism. It follows that
		\[\corank_{\Omega(\Gamma_\ac)} \Sel(E_p/K_\ac) \leq 1.\] 
		
		Recall that we denote by $(\Sel_p(E/K_\ac))_p$ the $p$-torsion part of the Selmer group $\Sel_p(E/K_\ac)$). The map
		\[\Sel(E_p/K_\ac) \rightarrow (\Sel_p(E/K_\ac))_p\]
		is an isomorphism since $H^1(K_S/K_\ac, E_p) \simeq H^1(K_S/K_\ac, E_{p^\infty})_p$ under our hypotheses (see \cite[Theorem 4.9]{NS25}). Hence, $X(E/K_\ac)/p X(E/K_\ac)$ has the same $\Omega(\Gamma_\ac)$-rank as the dual of $\Sel(E_p/K_\ac)$. We have seen that the dual of $\Sel(E_p/K_\ac)$ has $\Omega(\Gamma_\ac)$-$\corank$ at most $1$, so the same must hold for $X(E/K_\ac)/p X(E/K_\ac)$.
		
		On the other hand, the fact that $X(E/K_\ac)$ has $\Lambda(\Gamma_\ac)$-rank $1$ as discussed in \ref{sec:indef} implies that the $\Omega(\Gamma_\ac)$-rank of $X(E/K_\ac)/p X(E/K_\ac)$ is at least $1$. Thus the equality must occur, which implies that only the $\Lambda(\Gamma_\ac)$-rank of $X(E/K_\ac)$ contributes towards the $\Omega(\Gamma_\ac)$-rank of $X(E/K_\ac)/p X(E/K_\ac)$, and the $\mu$-invariant of $X(E/K_\ac)$ must be trivial.
	\end{proof}
	
	We provide the following numerical example in support of Theorem \ref{thm:mu=0-main}.
	\begin{ex}
		Let $K = \Q(\sqrt{-7})$, $p = 5$ and $E/\Q$ be the elliptic curve with Cremona label $1058d1$. The conductor $1058$ satisfies the strong Heegner hypothesis. In this example, $\bar{\rho}_{E, 5}$ is surjective, the Tamagawa product $\prod_{\ell \mid N} c_\ell(E/\Q) = 1$ and the Hasse invariant $a_5(E) = 2$, which implies $5 \nmid \prod_{v \mid 5} d_v$. 
	\end{ex}
	We have mentioned in Section \ref{sec:indef} that the work of Matar and Nekov\'a\v r \cite{MN19} implies $\Sel_p(E/K_\ac)$ is co-free and in particular has trivial $\mu$-invariant. It is natural to ask whether our example falls under their hypotheses. In our example, the Heegner point $y_K$ is divisible by $p$ in $E(K)$  whereas \cite{MN19} assumes the opposite, namely $y_K \not \in pE(K)$.
	
	\section*{Acknowledgements}
	The authors would like to thank Ashay Burungale, Antonio Lei and  Ahmed Matar for helpful conversations.

	%%  The bibliography
	
	%%  If your bibliography is in BibTeX format, use the following setup:
	%%  Style BST file for numbered citation:
	\bibliographystyle{plain}
	%%  Bibliography file (usually `*.bib')
	\bibliography{refs.bib}          

\begin{thebibliography}{10}

\bibitem{ARS06}
Amod Agashe, Kenneth Ribet, and William~A. Stein.
\newblock The {M}anin constant.
\newblock {\em Pure Appl. Math. Q.}, 2(2):617--636, 2006.

\bibitem{AH06}
Adebisi Agboola and Benjamin Howard.
\newblock Anticyclotomic {I}wasawa theory of {CM} elliptic curves.
\newblock {\em Ann. Inst. Fourier (Grenoble)}, 56(4):1001--1048, 2006.

\bibitem{BD05}
M.~Bertolini and H.~Darmon.
\newblock Iwasawa's main conjecture for elliptic curves over anticyclotomic
  {$\Bbb Z_p$}-extensions.
\newblock {\em Ann. of Math. (2)}, 162(1):1--64, 2005.

\bibitem{Ber01}
Massimo Bertolini.
\newblock Iwasawa theory for elliptic curves over imaginary quadratic fields.
\newblock volume~13, pages 1--25. 2001.
\newblock 21st Journ\'ees Arithm\'etiques (Rome, 2001).

\bibitem{BLV23}
Massimo Bertolini, Matteo Longo, and Rodolfo Venerucci.
\newblock The anticyclotomic main conjectures for elliptic curves, 2023.

\bibitem{Bur17}
Ashay~A. Burungale.
\newblock On the non-triviality of the {$p$}-adic {A}bel-{J}acobi image of
  generalised {H}eegner cycles modulo {$p$}, {II}: {S}himura curves.
\newblock {\em J. Inst. Math. Jussieu}, 16(1):189--222, 2017.

\bibitem{CG96}
J.~Coates and R.~Greenberg.
\newblock Kummer theory for abelian varieties over local fields.
\newblock {\em Invent. Math.}, 124(1-3):129--174, 1996.

\bibitem{CS00}
J.~Coates and R.~Sujatha.
\newblock {\em Galois cohomology of elliptic curves}, volume~88 of {\em Tata
  Institute of Fundamental Research Lectures on Mathematics}.
\newblock Published by Narosa Publishing House, New Delhi; for the Tata
  Institute of Fundamental Research, Mumbai, 2000.

\bibitem{CS05}
J.~Coates and R.~Sujatha.
\newblock Fine {S}elmer groups of elliptic curves over {$p$}-adic {L}ie
  extensions.
\newblock {\em Math. Ann.}, 331(4):809--839, 2005.

\bibitem{Coa83}
John Coates.
\newblock Infinite descent on elliptic curves with complex multiplication.
\newblock In {\em Arithmetic and geometry, {V}ol. {I}}, volume~35 of {\em
  Progr. Math.}, pages 107--137. Birkh\"auser Boston, Boston, MA, 1983.

\bibitem{CM94}
John Coates and Gary McConnell.
\newblock Iwasawa theory of modular elliptic curves of analytic rank at most
  {$1$}.
\newblock {\em J. London Math. Soc. (2)}, 50(2):243--264, 1994.

\bibitem{CSS03}
John Coates, Peter Schneider, and Ramdorai Sujatha.
\newblock Links between cyclotomic and {${\rm GL}_2$} {I}wasawa theory.
\newblock pages 187--215. 2003.
\newblock Kazuya Kato's fiftieth birthday.

\bibitem{Cor02}
Christophe Cornut.
\newblock Mazur's conjecture on higher {H}eegner points.
\newblock {\em Invent. Math.}, 148(3):495--523, 2002.

\bibitem{Eis95}
David Eisenbud.
\newblock {\em Commutative algebra}, volume 150 of {\em Graduate Texts in
  Mathematics}.
\newblock Springer-Verlag, New York, 1995.
\newblock With a view toward algebraic geometry.

\bibitem{Gre78}
Ralph Greenberg.
\newblock On the structure of certain {G}alois groups.
\newblock {\em Invent. Math.}, 47(1):85--99, 1978.

\bibitem{Gre83}
Ralph Greenberg.
\newblock On the {B}irch and {S}winnerton-{D}yer conjecture.
\newblock {\em Invent. Math.}, 72(2):241--265, 1983.

\bibitem{Gre89}
Ralph Greenberg.
\newblock Iwasawa theory for {$p$}-adic representations.
\newblock In {\em Algebraic number theory}, volume~17 of {\em Adv. Stud. Pure
  Math.}, pages 97--137. Academic Press, Boston, MA, 1989.

\bibitem{Gre99}
Ralph Greenberg.
\newblock Iwasawa theory for elliptic curves.
\newblock In {\em Arithmetic theory of elliptic curves ({C}etraro, 1997)},
  volume 1716 of {\em Lecture Notes in Math.}, pages 51--144. Springer, Berlin,
  1999.

\bibitem{GV00}
Ralph Greenberg and Vinayak Vatsal.
\newblock On the {I}wasawa invariants of elliptic curves.
\newblock {\em Invent. Math.}, 142(1):17--63, 2000.

\bibitem{GZ86}
Benedict~H. Gross and Don~B. Zagier.
\newblock Heegner points and derivatives of {$L$}-series.
\newblock {\em Invent. Math.}, 84(2):225--320, 1986.

\bibitem{HO10}
Yoshitaka Hachimori and Tadashi Ochiai.
\newblock Notes on non-commutative {I}wasawa theory.
\newblock {\em Asian J. Math.}, 14(1):11--17, 2010.

\bibitem{HV03}
Yoshitaka Hachimori and Otmar Venjakob.
\newblock Completely faithful {S}elmer groups over {K}ummer extensions.
\newblock pages 443--478. 2003.
\newblock Kazuya Kato's fiftieth birthday.

\bibitem{HL21}
Jeffrey Hatley and Antonio Lei.
\newblock Comparing anticyclotomic {S}elmer groups of positive coranks for
  congruent modular forms---{P}art {II}.
\newblock {\em J. Number Theory}, 229:342--363, 2021.

\bibitem{How04}
Benjamin Howard.
\newblock The {H}eegner point {K}olyvagin system.
\newblock {\em Compos. Math.}, 140(6):1439--1472, 2004.

\bibitem{How04b}
Benjamin Howard.
\newblock Iwasawa theory of {H}eegner points on abelian varieties of {$\rm
  GL_2$} type.
\newblock {\em Duke Math. J.}, 124(1):1--45, 2004.

\bibitem{Hsi14}
Ming-Lun Hsieh.
\newblock Special values of anticyclotomic {R}ankin-{S}elberg {$L$}-functions.
\newblock {\em Doc. Math.}, 19:709--767, 2014.

\bibitem{Kol90}
V.~A. Kolyvagin.
\newblock Euler systems.
\newblock In {\em The {G}rothendieck {F}estschrift, {V}ol.\ {II}}, volume~87 of
  {\em Progr. Math.}, pages 435--483. Birkh\"auser Boston, Boston, MA, 1990.

\bibitem{KL89}
V.~A. Kolyvagin and D.~Yu. Logach\"ev.
\newblock Finiteness of the {S}hafarevich-{T}ate group and the group of
  rational points for some modular abelian varieties.
\newblock {\em Algebra i Analiz}, 1(5):171--196, 1989.

\bibitem{MN19}
Ahmed Matar and Jan Nekov\'{a}\v{r}.
\newblock Kolyvagin's result on the vanishing of {${\rm III}(E/K)[p^\infty]$}
  and its consequences for anticyclotomic {I}wasawa theory.
\newblock {\em J. Th\'eor. Nombres Bordeaux}, 31(2):455--501, 2019.

\bibitem{NS25}
Dac-Nhan-Tam Nguyen and Ramdorai Sujatha.
\newblock Congruent elliptic curves over some $p$-adic lie extensions, 2025.

\bibitem{P-R84}
Bernadette Perrin-Riou.
\newblock Arithm\'etique des courbes elliptiques et th\'eorie d'{I}wasawa.
\newblock {\em M\'em. Soc. Math. France (N.S.)}, (17):130, 1984.

\bibitem{P-R87}
Bernadette Perrin-Riou.
\newblock Fonctions {$L$} {$p$}-adiques, th\'eorie d'{I}wasawa et points de
  {H}eegner.
\newblock {\em Bull. Soc. Math. France}, 115(4):399--456, 1987.

\bibitem{P-R92}
Bernadette Perrin-Riou.
\newblock Th\'{e}orie d'{I}wasawa des repr\'{e}sentations {$p$}-adiques: le cas
  local.
\newblock {\em C. R. Acad. Sci. Paris S\'{e}r. I Math.}, 315(6):629--632, 1992.

\bibitem{P-R95}
Bernadette Perrin-Riou.
\newblock Fonctions {$L$} {$p$}-adiques des repr\'esentations {$p$}-adiques.
\newblock {\em Ast\'erisque}, (229):198, 1995.

\bibitem{PW11}
Robert Pollack and Tom Weston.
\newblock On anticyclotomic {$\mu$}-invariants of modular forms.
\newblock {\em Compos. Math.}, 147(5):1353--1381, 2011.

\bibitem{Vat03}
V.~Vatsal.
\newblock Special values of anticyclotomic {$L$}-functions.
\newblock {\em Duke Math. J.}, 116(2):219--261, 2003.

\bibitem{Ven02}
Otmar Venjakob.
\newblock On the structure theory of the {I}wasawa algebra of a {$p$}-adic
  {L}ie group.
\newblock {\em J. Eur. Math. Soc. (JEMS)}, 4(3):271--311, 2002.

\end{thebibliography}
	%%
	%%  or include bibliography directly:
	%\begin{thebibliography}{9}
	%%  Use \bibitem{r1} or \bibitem[Surname(2010)]{r1} (for authoryear case)
	%%  Put author names in \textsc{} command in order to use small caps font
	%
	%\bibitem{}
	%\textsc{}
	%
	%\end{thebibliography}
	
\end{document}